\newtheorem{lemma}{Lemma}[section]
\newtheorem{theorem}{Theorem}
\newtheorem{corollary}{Corollary}
\newtheorem{proposition}{Proposition}
\theoremstyle{definition}
\newtheorem*{definition*}{Definition}
\newtheorem*{examples}{Examples}
\newtheorem{remark}{Remark}
\newtheorem*{remark*}{Remark}
\newtheorem*{remarks}{Remarks}
\newcommand{\loc}{{\rm loc}}
\newcommand{\Real}{{\rm Re\,}}
\def\expandafter\normalsize\expandafter{%
    \normalsize
    \setlength\abovedisplayshortskip{8pt}
    \setlength\belowdisplayshortskip{8pt}
}
\begin{document}

\title{On admissible singular drifts of symmetric $\alpha$-stable process}

\author{D.\,Kinzebulatov and K.\,R.\,Madou}

\begin{abstract}We consider the problem of existence of a (unique) weak solution to the SDE describing symmetric $\alpha$-stable process with a locally unbounded drift $b:\mathbb R^d \rightarrow \mathbb R^d$, $d \geq 3$, $1<\alpha<2$.
In this paper, $b$ belongs to the class of weakly form-bounded vector fields. The latter arises as the class providing the $L^2$ theory of the non-local operator behind the SDE, i.e.\,$(-\Delta)^{\frac{\alpha}{2}} + b \cdot \nabla$, and contains as proper sub-classes the other classes of singular vector fields studied in the literature in connection with this operator, such as the Kato class, weak $L^{\frac{d}{\alpha-1}}$ class and the Campanato-Morrey class (thus, $b$ can be so singular that it destroys the standard heat kernel estimates in terms of the heat kernel of the fractional Laplacian). We show that for such $b$ the operator $-(-\Delta)^{\frac{\alpha}{2}} - b \cdot \nabla$ admits a realization as a Feller generator, and that the probability measures determined by the Feller semigroup (uniquely in appropriate sense) admit description as weak solutions to the corresponding SDE. The proof is based on detailed regularity theory of $(-\Delta)^{\frac{\alpha}{2}} + b \cdot \nabla$  in $L^p$, $p>d-\alpha+1$.
\end{abstract}

\address{Universit\'{e} Laval, D\'{e}partement de math\'{e}matiques et de statistique, 1045 av.\,de la M\'{e}decine, Qu\'{e}bec, QC, G1V 0A6, Canada}

\email{damir.kinzebulatov@mat.ulaval.ca}

\address{Universit\'{e} Laval, D\'{e}partement de math\'{e}matiques et de statistique, 1045 av.\,de la M\'{e}decine, Qu\'{e}bec, QC, G1V 0A6, Canada}

\email{kodjo-raphael.madou.1@ulaval.ca}

\keywords{Non-local operators, stochastic differential equations, form-bounded vector fields, regularity of solutions, Feller semigroups}

\subjclass[2010]{60G52, 47D07 (primary), 60J75 (secondary)}

\maketitle

\section{Introduction and main results}

\label{intro_sect}

Let $Z_t$ be a rotationally symmetric $\alpha$-stable process in $\mathbb{R}^{d}$, $d \geq 3$, $1<\alpha<2$, i.e.\,a L\'{e}vy process with characteristic function 
\begin{displaymath}
\mathbb{E}[\exp (i\varkappa \cdot (Z_{t}-Z_0)]=\exp  (-t|\varkappa |^{\alpha}) \quad \text{ for every } \varkappa \in \mathbb{R}^{d}.
\end{displaymath}
The (minus) generator of $Z_t$ is the fractional Laplace operator $(-\Delta)^{\frac{\alpha}{2}}$ given on $C_c^\infty$ by the formula
\begin{equation*}
(-\Delta)^{\frac{\alpha}{2}}f(x)=\lim_{\epsilon \downarrow 0} c\int_{|y|>\varepsilon}\frac{f(x+y)-f(x)}{|y|^{d+\alpha}}dy,\quad \text{ where } c:=\frac{\alpha 2^{\alpha-1}\Gamma(\frac{d+\alpha}{2})}{\pi^{\frac{d}{2}}\Gamma(\frac{2-\alpha}{2})}.
\end{equation*}

Let $b:\mathbb R^d \rightarrow \mathbb R^d$ be a measurable vector field with entries in $L^1_{\loc}\equiv L^1_{\loc}(\mathbb R^d)$.
The subject of this paper is the stochastic differential equation
\begin{equation}
\label{eq0}
X_t=x-\int_0^t b(X_s)ds + Z_t-Z_0, \quad t \geq 0, \quad x \in \mathbb R^d,
\end{equation} 
Recall that a weak solution to \eqref{eq0} is a process $X_t$ defined on some probability space having a.s.\,right continuous trajectories with left limits, such that $\int_0^t |b(X_s)|ds<\infty$ a.s.\,for every $t>0$, and such that $X_t$ satisfies \eqref{eq0} a.s.\,for a symmetric $\alpha$-stable process $Z_t$. 
A weak solution to \eqref{eq0}, when it exists (e.g.\,if $|b| \in L^\infty$, see \cite{Ko}), is called $\alpha$-stable process with drift $b$. It plays central role in the study of jump processes which, in contrast to diffusion processes, can have long range interactions.
The operator behind SDE \eqref{eq0} is the non-local operator
$(-\Delta)^{\frac{\alpha}{2}} + b \cdot \nabla$, i.e.\,one expects that the transition density of $X_t$ solves the corresponding parabolic equation for $(-\Delta)^{\frac{\alpha}{2}} + b \cdot \nabla$.

The following question is important: what are the minimal assumptions on the local singularities of the vector field $b$, not assuming additional structure such as symmetry or existence of the divergence, such that, for an arbitrary starting point, there exists a unique (in appropriate sense) weak solution to \eqref{eq0}? This question has been extensively studied in the literature. By the result in \cite{PP,P}, if 
\begin{equation}
\label{b_p}
|b| \in L^p+L^\infty, \quad \text{ for some } p>\frac{d}{\alpha-1}, 
\end{equation}
then there exists a unique in law weak solution to \eqref{eq0}. Although the exponent $\frac{d}{\alpha-1}$ is the best possible, the class \eqref{b_p} is far from being the maximal admissible: this result has been strengthened in \cite{CKS, CW, KS} where the authors consider $b$ in the standard Kato class $\mathbf{K}^{d,\alpha-1}_0$ containing, for a given $\varepsilon>0$, vector fields $b$ with $|b| \not \in L^{1+\varepsilon}_{\loc}$ (see more detailed discussion of the existing results below).
Similarly to these works, in this paper we search for the integral characteristics of $b$ that determines whether a unique weak solution to \eqref{eq0} exists. We consider the following larger class of vector fields:

\begin{definition*}
A vector field $b:\mathbb R^d \rightarrow \mathbb R^d$ with entries in $L^1_{\loc}\equiv L^1_{\loc}(\mathbb R^d)$ (we write $b \in L^1_{\loc}(\mathbb R^d,\mathbb R^d)$) is said to belong to the class of weakly form-bounded vector fields $\mathbf{F}_{\delta}^{\scriptscriptstyle \frac{\alpha-1}{2}}$, $\delta>0$ if  there exists $\lambda = \lambda_{\delta} > 0$ such that
$$
\big\| |b|^{\frac{1}{2}}\big(\lambda+(-\Delta)^{\frac{\alpha}{2}}\big)^{-\frac{\alpha-1}{2\alpha}}\big\|_{2 \rightarrow 2} \leq \sqrt{\delta}
$$
Here and below, $\|\cdot\|_{p \rightarrow q}$ denotes the $\|\cdot\|_{L^p \rightarrow L^q}$ operator norm.
\end{definition*}

Equivalently, 
$$
|b|  \leq \delta \big(\lambda+(-\Delta)^{\frac{\alpha}{2}}\big)^{\frac{\alpha-1}{\alpha}} \quad \text{ in the sense of quadratic forms}.
$$

Clearly, the sum of two weakly form-bounded vector fields is also weakly form-bounded (with different $\delta$). The constant $\delta$ is called the weak form-bound of $b$. It measures the size of critical singularities of the drift $b$: below we show that there is a quantitative dependence between the value of $\delta$ and the regularity properties of solutions to the corresponding elliptic and parabolic equations.

Our assumptions concerning $\delta$ will involve only strict inequalities, so using the Spectral Theorem we can re-state our hypothesis on $b$, i.e.\,$b \in \mathbf{F}_{\delta}^{\scriptscriptstyle \frac{\alpha-1}{2}}$, without affecting the statement of the main result (Theorem \ref{ThCinfty}) below, as
$$\| |b|^{\frac{1}{2}}(\lambda-\Delta)^{-\frac{\alpha-1}{4}}\|_{2 \rightarrow 2} \leq \sqrt{\delta}$$
for some $\lambda=\lambda_\delta>0$.

In examples 1-4, 6 below we list some sub-classes of $\mathbf{F}_{\delta}^{\scriptscriptstyle \frac{\alpha-1}{2}}$ defined in elementary terms.

\begin{examples}

1.~By the fractional Sobolev inequality, 
$$|b| \in L^{\frac{d}{\alpha-1}}+L^{\infty} \quad \Rightarrow \quad b \in \mathbf{F}_{\delta}^{\frac{\alpha-1}{2}},$$ where $\delta>0$ can be chosen arbitrarily small.

2.~More generally, vector fields  with entries in $L^{\frac{d}{\alpha-1},\infty}$ (the weak $L^{\frac{d}{\alpha-1}}$ class) are weakly form-bounded: 
\begin{align*}
b=b_1+b_2 & \in L^{\frac{d}{\alpha-1},\infty}(\mathbb R^d,\mathbb R^d) +  L^{\infty}(\mathbb R^d,\mathbb R^d) \\
&\Rightarrow \quad b \in \mathbf{F}_{\delta}^{\frac{\alpha-1}{2}}, \qquad
\sqrt{\delta}=\Omega_d^{-\frac{\alpha-1}{2d}}\frac{2^{-\frac{\alpha-1}{2}}\Gamma\big(\frac{d-\alpha+1}{4}\big)}{\Gamma\big(\frac{d+\alpha-1}{4}\big)}\|b_1\|_{\frac{d}{\alpha-1},\infty}^{\frac{1}{2}},
\end{align*}
where $\Omega_d$ is the volume of the unit ball $B(0,1) \subset \mathbb R^d$, see \cite[Corollary 2.9]{KPS}.

3.~In particular, by the fractional Hardy-Rellich inequality, the Hardy-type drift
$$
b(x)=\sqrt{\delta} \kappa_{\alpha,d}|x|^{-\alpha}x, \qquad \kappa_{\alpha,d}:=2^{\frac{\alpha-1}{2}-}\frac{\Gamma(\frac{d+\alpha-1}{4})}{\Gamma(\frac{d-\alpha+1}{4})}, \quad \delta>0
$$
is in $\mathbf{F}_\delta^{\scriptscriptstyle \frac{\alpha-1}{2}}$ with $\lambda=0$ \cite[Corollary 2.9]{KPS}.

4.~Recall that a vector field $b \in L^1_{\loc}(\mathbb R^d,\mathbb R^d)$ is said to belong to the Kato class $\mathbf{K}_{\delta}^{d,\alpha-1}$, $\delta>0$
if there exists $\lambda = \lambda_{\delta} > 0$ such that 
$$\big\| \big(\lambda+(-\Delta)^{\frac{\alpha}{2}}\big)^{-\frac{\alpha-1}{\alpha}}|b|\big\|_{\infty} \leq \delta.$$
We have $$\mathbf{K}_{\delta}^{d,\alpha-1} \subsetneq \mathbf{F}_{\delta}^{\frac{\alpha-1}{2}}.$$
Indeed, if $b \in \mathbf{K}_{\delta}^{d,\alpha-1}$, then by duality $\big\| |b|\big(\lambda+(-\Delta)^{\frac{\alpha}{2}}\big)^{-\frac{\alpha-1}{\alpha}}\big\|_{1 \rightarrow 1} \leq \delta$, and so by interpolation $\big\| |b|^{\frac{1}{2}}\big(\lambda+(-\Delta)^{\frac{\alpha}{2}}\big)^{-\frac{\alpha-1}{\alpha}}|b|^{\frac{1}{2}}\big\|_{2 \rightarrow 2} \leq \delta $, i.e.\,$b \in \mathbf{F}_{\delta}^{\frac{\alpha-1}{2}}$.

We note that for a given $\varepsilon>0$ there exist $b \in \bigcap_{\delta>0}\mathbf{K}_{\delta}^{d,\alpha-1}$ such that $|b| \not \in L_{\loc}^{1+\varepsilon}$.

It is not difficult to see that the vector field in example 3 does not belong to the Kato class $\mathbf{K}^{d,\alpha-1}_{\delta_1}$ for any $\delta_1>0$. In fact, even $L^{\frac{d}{\alpha-1}}(\mathbb R^d,\mathbb R^d) \not\subset \mathbf{K}^{d,\alpha-1}_{\delta_1}$ for any $\delta_1>0$. 

\smallskip

5.~We say that vector field $b$ belongs to the class of form-bounded vector fields $\mathbf{F}_\delta^{\alpha-1}$, $\delta>0$ if $|b| \in L^2_{\loc}$
$$
\big\||b|\big(\lambda+(-\Delta)^{\frac{\alpha}{2}}\big)^{-\frac{\alpha-1}{\alpha}}\big\|_{2 \rightarrow 2} \leq \delta \quad \text{ for some }\lambda=\lambda_\delta.
$$
By the Heinz-Kato inequality,  $\mathbf{F}_\delta^{\alpha-1} \subsetneq \mathbf{F}_\delta^{\scriptscriptstyle \frac{\alpha-1}{2}}$. (We note that $L^\frac{d}{\alpha-1}(\mathbb R^d,\mathbb R^d) \subset\mathbf{F}_\delta^{\alpha-1}$ with arbitrarily small $\delta$, however $\mathbf{K}^{d,\alpha-1}_{\delta_1} - \mathbf{F}_\delta^{\alpha-1} \neq \varnothing$ for any $\delta$, $\delta_1>0$.)

\smallskip

6.~If $|b|^{\frac{2}{\alpha-1}}$ belongs to the Campanato-Morrey class 
$$
\left\{v \in L_{\loc}^s \mid \biggl(\frac{1}{|Q|}\int_Q |v(x)|^s dx \biggr)^{\frac{1}{s}} \leq c_s l(Q)^{-2} \text{ for all cubes $Q$}\right\}, \quad s>1,
$$
where $|Q|$ and $l(Q)$ are the volume and the side length of a cube $Q$,
then $\||b|^{\frac{1}{\alpha-1}}(-\Delta)^{-\frac{1}{2}}\|_{2 \rightarrow 2} \leq \delta^{\frac{1}{\alpha-1}}$  with appropriate $\delta$ (Adams' inequality). Then, by the Heinz-Kato inequality,
$b \in \mathbf{F}_\delta^{\alpha-1}$ and so by the previous example $b \in \mathbf{F}_\delta^{\scriptscriptstyle \frac{\alpha-1}{2}}$.

More sophisticated examples of weakly form-bounded vector fields can be obtained by modifying examples in \cite[sect.\,3]{KiS}.

\end{examples}

Our point of departure is a simpler problem in $L^2$: to find the minimal assumption on $b$ such that $(-\Delta)^{\frac{\alpha}{2}} + b \cdot \nabla$ admits an operator realization on $L^2$ as the (minus) generator of a $C_0$ semigroup, say, $e^{-t\Lambda_2(b)}$. 
In Theorem \ref{thmL2} below we arrive at the condition $b \in \mathbf{F}_{\delta}^{\scriptscriptstyle \frac{\alpha-1}{2}}$, $\delta<1$. Theorem \ref{thmL2} first appeared in \cite{S} in the case $\alpha=2$. 

We note that applying to $(-\Delta)^{\frac{\alpha}{2}} + b \cdot \nabla$, $1<\alpha<2$ the form method, i.e.\,the Kato-Lions-Lax-Milgram-Nelson Theorem, is quite problematic since one can no longer employ the quadratic inequality in order to control the $b \cdot \nabla$ term. Moreover, even if $\alpha=2$, the form method can handle only the smaller class of vector fields $\mathbf{F}_\delta $ ($\equiv \mathbf{F}_\delta^1 \subsetneq \mathbf{F}_{\delta}^{\scriptscriptstyle \frac{1}{2}}$) while giving a weaker result on the regularity of the domain of $\Lambda_2(b)$ compared to \cite[Theorem 5.1]{S}, see detailed discussion in \cite{KiS}.
On the other hand, the Hille Perturbation Theorem, while applicable to $(-\Delta)^{\frac{\alpha}{2}} + b \cdot \nabla$ in $L^2$ for all $1<\alpha \leq 2$,
can handle only the proper sub-class $\mathbf{F}^{\alpha-1}_\delta$ of $\mathbf{F}_{\delta}^{\scriptscriptstyle \frac{\alpha-1}{2}}$, see \cite[Proposition 7]{KSS} for details. See also Remark 3 below.

Denote $C_\infty:=\{f \in C(\mathbb R^d): \lim_{|x| \rightarrow \infty}f(x)=0\}$ (with the $\sup$-norm). Recall that a positivity preserving contraction $C_0$ semigroup on $C_\infty$ is called  a Feller semigroup.

Now, having at hand an operator realization $\Lambda_2(b)$ of $(-\Delta)^{\frac{\alpha}{2}} + b \cdot \nabla$ in $L^2$, we are in position to enquire what extra assumption on $b \in \mathbf{F}_\delta^{\scriptscriptstyle \frac{\alpha-1}{2}}$, $\delta<1$ is needed to ensure that the operators $e^{-t\Lambda_2(b)} \upharpoonright L^2 \cap C_\infty$, $t>0$ admit extension to bounded linear operators on $C_\infty$ that constitute a Feller semigroup, say, $e^{-t\Lambda_{C_\infty}(b)}$.
In the main result of this paper, Theorem \ref{ThCinfty}, we show that this extra assumption is expressed in terms of the weak form-bound $\delta$: it has to be smaller than a certain explicit constant $c=c(d)<1$ (Theorem \ref{ThCinfty}(\textit{i}),(\textit{ii})).
The construction of the Feller semigroup proceeds via detailed regularity theory of $(-\Delta)^{\frac{\alpha}{2}} + b \cdot \nabla$ in $L^p$, $p>d-\alpha+1$ (Theorem \ref{thmLp}) which we develop, while imposing the same $L^2$ hypothesis on the drift (i.e.\,$b \in \mathbf{F}_{\delta}^{\scriptscriptstyle \frac{\alpha-1}{2}}$ but with smaller $\delta$), using the $L^p$ inequalities for symmetric Markov generators of \cite{BS,LS} (Appendix \ref{lp_sect}).

Let us note that the singularities of a  vector field $b \in \mathbf{F}_\delta^{\scriptscriptstyle\frac{\alpha-1}{2}}$ can be so strong that they destroy the standard bounds on the heat kernel $e^{-t\Lambda_{C_\infty}(b)}(x,y)$ in terms of $e^{-t(-\Delta)^{\scriptscriptstyle \frac{\alpha}{2}}}(x,y)$, see discussion below.

Next, in Proposition \ref{lem_weights}, we establish  weighted $L^p \rightarrow L^\infty$ estimates on the resolvent $(\mu+\Lambda_{C_\infty}(b))^{-1}$. In absence of the standard upper bound on the heat kernel $e^{-t\Lambda_{C_\infty}(b)}(x,y)$, these estimates play crucial role (e.g.\,they allow to prove that the Feller semigroup is conservative, i.e.\,$\int_{\mathbb R^d}e^{-t\Lambda_{C_\infty}(b)}(x,y)dy=1$ for all $x \in \mathbb R^d$).

Let $D([0,\infty[,\mathbb R^d)$ be the space of all right-continuous functions having left limits, endowed with the Skorokhod topology, $X_t$ the projection coordinate map on $D([0,\infty[,\mathbb R^d)$, and $\mathcal G_t$ is the filtration generated by $\{X_s, s \leq t\}$.
By a standard result, given a conservative Feller semigroup $T^t$ on $C_\infty$, there exist
probability measures $\{\mathbb P_x\}_{x \in \mathbb R^d}$ on $\mathcal G_\infty$ such that $\big(D([0,\infty[,\mathbb R^d), \mathcal G_t, \mathcal G_\infty, \mathbb P_x\big)$ is a Markov process,  $\mathbb P_{x}[X_0=x]=1$ and
$$
\mathbb E_{\mathbb P_x}[f(X_t)]=(T^t f)(x), \quad X \in D([0,\infty[,\mathbb R^d), \quad f \in C_\infty, \quad x \in \mathbb R^d.
$$ 

Finally, having at hand the weighted estimates, we run an $L^p$ weighted variant of an argument in \cite{PP,P} to show that, for every starting point $x \in \mathbb R^d$, the corresponding probability measure determined by $T^t:=e^{-\Lambda_{C_\infty}(b)}$ yields a weak solution to the SDE \eqref{eq0} (Theorem \ref{ThCinfty}(\textit{vi}),(\textit{vii})). 

The above program has been carried out in the case $\alpha=2$ for $b \in \mathbf{F}_\delta^{\scriptscriptstyle 1/2}$ in \cite{Kin, KiS} (Feller semigroup), \cite{KiS2} (the characterization of the probability measures as weak solutions to SDE \eqref{eq0} with Brownian motion in place of $Z_t$). The construction of the Feller semigroup in Theorem \ref{ThCinfty}(\textit{i}),(\textit{ii}) below follows closely \cite{Kin}, \cite[sect.\,4]{KiS}. The main novelty and difficulty is in the proof of the crucial weighted estimates of Proposition \ref{lem_weights} (Section \ref{weight_sect}).
The calculational techniques used in the proof of an analogous result in \cite{KiS2} are unavailable when $\alpha<2$. In this regard, we develop a new approach to the proof of these estimates
  taking advantage of the fact that the $L^p$ inequalities of \cite{BS,LS} are valid for abstract symmetric Markov generators, in particular, for a ``weighted'' fractional Laplace operator; we show that the latter is indeed a symmetric Markov generator using the method of proof of $L^1$ accretivity of non-local operators in weighted spaces introduced in \cite{KSS} (but for different weights and for different purpose).
Armed with the $L^p$ inequalities for the weighed fractional Laplace operator, we repeat the principal steps of construction of the Feller semigroup but now on the weighted space, using the fact that 
the crucial pointwise estimate \eqref{A} does not depend on the choice of the weight on $\mathbb R^d$.

In this paper, we prove a weaker uniqueness result than the uniqueness in law (i.e.\,we prove that the weak solution to \eqref{eq0}, determined by the Feller semigroup, is unique in the class of weak solutions that constitute an operator semigroup with reasonable properties, see Remark \ref{rem2} below). Concerning possible proof of the uniqueness in law, we note that in general $|\nabla u| \not \in L^\infty$, $u=(\mu+\Lambda_{C_\infty}(b))^{-1}f$, $b \in \mathbf{F}_\delta^{\scriptscriptstyle \frac{\alpha-1}{2}}$, even if $f \in C_c^\infty$. 

The method of this paper works for more general operators. In particular, in the construction of the Feller semigroup and in the proof of the weighted estimates below one can replace  $(-\Delta)^{\frac{\alpha}{2}}$ by a symmetric Markov generator $A$ provided  that it satisfies the pointwise estimate \eqref{A} (e.g.\,$A=(1-\Delta)^{\frac{\alpha}{2}}$).

\smallskip

Let us now comment more on the Kato class and on the existing results. 

Recall one of the equivalent definitions of the standard Kato class $\mathbf{K}_0^{d,\alpha-1}$:
$$
\mathbf{K}^{d,\alpha-1}_0:=\bigcap_{\delta>0}\mathbf{K}^{d,\alpha-1}_\delta,
$$ 
where $\mathbf{K}^{d,\alpha-1}_\delta$ has been defined in example 4 above. Since
$\mathbf{K}_\delta^{d,\alpha-1} \subsetneq \mathbf{F}_{\delta}^{\frac{\alpha-1}{2}}$, we have $$\mathbf{K}_0^{d,\alpha-1} \subsetneq \mathbf{F}_{\delta}^{\frac{\alpha-1}{2}} \quad \text{ for any fixed }\delta>0.$$ 
We note that $\mathbf{K}_0^{d,\alpha-1}$ is a proper sub-class of $\mathbf{K}^{d,\alpha-1}_\delta$, $\delta>0$. Concerning the difference between the two Kato classes, let us note that multiplying $b \in \mathbf{K}^{d,\alpha-1}_\delta$ by a constant $c>1$ in general takes the vector field out of  $\mathbf{K}^{d,\alpha-1}_\delta$ and thus out of $\mathbf{F}_{\delta}^{\frac{\alpha-1}{2}}$, while for $b \in \mathbf{K}_0^{d,\alpha-1}$ one has $cb \in \mathbf{K}_0^{d,\alpha-1}$ for arbitrarily large $c$.

It is seen, using H\"{o}lder's inequality, that $$|b| \in L^{p}+L^{\infty} \quad \Rightarrow \quad b \in \mathbf{K}_0^{d,\alpha-1}, \quad p > \frac{d}{\alpha -1}.$$

The Kato class $\mathbf{K}^{d,\alpha-1}_\delta$, with $\delta>0$ sufficiently small,
provides the standard bounds on heat kernel $e^{-t\Lambda(b)}(x,y)$, $\Lambda(b) = (-\Delta)^{\frac{\alpha}{2}} + b \cdot \nabla$:
\begin{equation}
\label{bds1}
C^{-1}e^{-t(-\Delta)^{\frac{\alpha}{2}}}(x,y) \leq e^{-t\Lambda(b)}(x,y) \leq Ce^{-t(-\Delta)^{\frac{\alpha}{2}}}(x,y), \quad x,y \in \mathbb R^d 
\end{equation}
for all $0<t<t_0$ for a constant $C=C(d,\alpha,b,t_0)>0$. Moreover, if $b \in \mathbf{K}^{d,\alpha-1}_0$, then $e^{-t\Lambda(b)}(x,y)$ is continuous. See \cite{BJ}.
The latter yields: $e^{-t\Lambda(b)}$ is a conservative Feller semigroup  in $C_u(\mathbb R^d)$ ($\equiv$ bounded uniformly continuous functions). It has been established in \cite{CKS} that the probability measures $\{\mathbb P_x\}_{x \in \mathbb R^d}$ determined by $e^{-t\Lambda(b)}$ solve the martingale problem for $(-\Delta)^{\frac{\alpha}{2}} + b \cdot \nabla$ with test functions in $C_c^\infty$, as needed to obtain two-sided bounds on the heat kernel of $X_t$ killed upon exiting a smooth bounded domain. The uniqueness in law of the weak solution to the martingale problem, as well as the existence and the uniqueness in law of the weak solution to SDE \eqref{eq0} with $b \in \mathbf{K}_0^{d,\alpha-1}$, were established later in \cite{CW}. In \cite{KS} the authors consider SDE \eqref{eq0} with a Kato class measure-valued drift and establish the corresponding heat kernel bounds. The case $\alpha=2$ was considered earlier in \cite{BC}.

\begin{remarks}
1.~Concerning the relationship between the Kato class condition and the Feller property, let us mention the following special case of a result in \cite{V}, \cite{OSSV}. Let $V \in L^1_{\loc}$ be of one sign (in fact, $V$ can be a measure). Under fairly general assumptions on $V$, one can construct an operator realization $H_1(V)$ of the fractional Schr\"{o}digner operator $(-\Delta)^{\frac{\alpha}{2}} + V$ in $L^1$ as (minus) generator of a $C_0$ semigroup. If $(e^{-tH_1(V)})^* \upharpoonright C_\infty \subset C_\infty$ and is a $C_0$ semigroup in $C_\infty$, then \textit{necessarily} $V$ is locally in the standard Kato class of potentials $\mathbf{K}^{d,\alpha}_{0}$ (i.e.\,for every compact $E \subset \mathbb R^d$, $\mathbf{1}_E V \in \mathbf{K}_0^d:=\cap_{\delta>0}\mathbf{K}^{d,\alpha}_\delta$, where $\mathbf{K}^{d,\alpha}_\delta:=\big\{W \in L^1_{\loc} \mid \|W\big(\lambda+(-\Delta)^{\frac{\alpha}{2}}\big)^{-1}\|_{1 \rightarrow 1} \leq \delta \text{ for some } \lambda=\lambda_\delta\big\}$).

The situation in the case of the fractional Kolmogorov operator $(-\Delta)^{\frac{\alpha}{2}} + b \cdot \nabla$ is different. Although whenever $b $ is in the standard Kato class of drifts $\mathbf{K}_{0}^{d,\alpha-1}$ this operator admits a realization $\Lambda_{C_\infty}(b)$ in $C_\infty$ as (minus) generator of a $C_0$ semigroup, by the result of this paper the class of admissible drifts for $\Lambda_{C_\infty}(b)$ can be enlarged to $\mathbf{F}_\delta^{\scriptscriptstyle \frac{\alpha-1}{2}}$ ($\supsetneq \mathbf{K}_{\delta}^{d,\alpha-1} \supsetneq \mathbf{K}_{0}^{d,\alpha-1}$) \textit{with positive} $\delta$.

\medskip

2.~Although the model vector field $b$ in example 3 above is so singular that it destroys the standard heat kernel bounds \eqref{bds1},
sharp heat kernel bounds on $e^{-t\Lambda(b)}(x,y)$ exist and depend explicitly on the weak form-bound $\delta$ via presence of a ``desingularizing'' weight $\varphi_t(y):=\varphi(t^{-\frac{1}{\alpha}}y)$
$$
C^{-1}e^{-t(-\Delta)^{\frac{\alpha}{2}}}(x,y)\varphi_t(y) \leq e^{-t\Lambda(b)}(x,y) \leq Ce^{-t(-\Delta)^{\frac{\alpha}{2}}}(x,y)\varphi_t(y), \quad x,y \in \mathbb R^d, \quad y \neq 0,
$$
for all $t>0$, where $\varphi \in C(\mathbb R^d - \{0\})$, $\varphi(y):=|y|^{-d+\beta}$ for appropriate $0<\beta<d$ if $|y|<1$, $\varphi(y):=\frac{1}{2}$ if $|y|>2$ \cite[Theorem 3]{KSS}. 

Theorem \ref{ThCinfty} below provides a probabilistic setting for \cite{KSS}.

\medskip

3.~The proof of Theorem \ref{thmL2} below ($L^2$ theory of $(-\Delta)^{\frac{\alpha}{2}} + b \cdot \nabla$, $b \in \mathbf{F}_\delta^{\scriptscriptstyle \frac{\alpha-1}{2}}$, $\delta<1$) appeals to  ideas of E.\,Hille and H.F.\,Trotter. Alternatively, one can use the approach appealing to ideas of E.\,Hille and J.-\,L.\,Lions based on considering the suitable chain of Hilbert spaces for $(-\Delta)^{\frac{\alpha}{2}} + b \cdot \nabla$. See details in \cite{KiS}. 

\medskip

4.~Consider operator $(-\Delta)^{\frac{\alpha}{2}} + b \cdot \nabla$ with $b$ in $\mathbf{F}_\delta^{\alpha-1}$, the class of form-bounded vector fields. In the case $\alpha=2$, \cite{KoS} constructed an operator realization of $\Delta - b \cdot \nabla$ as a Feller generator using a different approach. Despite the inclusion $\mathbf{F}_\delta \subset \mathbf{F}_\delta^{\scriptscriptstyle 1/2}$, the result in \cite{KoS} is not a special case of the result in \cite{Kin}, \cite[sect.\,4]{KiS} since it admits larger values of $\delta$. This alternative approach, however, is inapplicable in the case $\alpha<2$ (one can not use the quadratic inequality in order to control the $b \cdot \nabla$ term).
\end{remarks}

\medskip
\medskip

\noindent\textbf{Notation.}\;Let $\mathcal W^{s,p}$, $s>0$ be the Bessel potential space endowed with norm $\|u\|_{p,s}:=\|g\|_p$,  
$u=(1+(-\Delta)^{\frac{\alpha}{2}})^{-\frac{s}{\alpha}}g$, $g \in L^p$, and $\mathcal W^{-s,p'}$ the anti-dual of $\mathcal W^{s,p}$, $p'=\frac{p}{p-1}$.

By $\mathcal B(X,Y)$ we denote the space of bounded linear operators between Banach spaces $X \rightarrow Y$, endowed with the operator norm $\|\cdot\|_{X \rightarrow Y}$. Abbreviate $\mathcal B(X):=\mathcal B(X,X)$. 

We write $T=s\mbox{-} X \mbox{-}\lim_n T_n$ for $T$, $T_n \in \mathcal B(X)$, $n=1,2,\dots$, if $Tf=\lim_n T_nf$ in $X$ for every $f \in X$.

Let $\mathcal L^d$ be the standard Lebesgue measure on $\mathbb R^d$. Denote
$$
\langle h\rangle:=\int_{\mathbb R^d} hd\mathcal L^d, \quad \langle h,g\rangle:=\langle h\bar{g}\rangle.
$$

Let $$
\gamma (x)=\left \{ 
\begin{array}{rl}
c e^{-\frac{1}{|x|^{2}-1}} & \text{ if } |x| < 1, \\
0 \hspace*{0.5cm} & \text{ if } |x| \geq 1, 
\end{array} 
\right.
$$ 
where $c$ is adjusted to    $\langle \gamma \rangle =1$.
Define the standard mollifier  $$\gamma_{\varepsilon}(x):=\frac{1}{\varepsilon^{d}}\gamma\bigg(\frac{x}{\varepsilon}\bigg), \quad x \in \mathbb{R}^{d}, \quad \varepsilon>0.$$

Given a vector field $b \in \mathbf{F}_{\delta}^{\scriptscriptstyle \frac{\alpha-1}{2}}$, we fix its $C^\infty$ smooth approximation
\begin{equation*}
b_{n}:=\gamma_{\varepsilon_{n}} \ast (\mathbf{1}_{n}b), \quad \varepsilon_{n}\downarrow 0, \quad n=1,2,\dots,
\end{equation*}
where $\mathbf{1}_{n}$ is the indicator of  $\{x \in \mathbb{R}^{d} \mid |x| \leq n, |b(x)| \leq n \}$.

It is seen that for every $\tilde{\delta} > \delta$ one can select $\varepsilon_{n}\downarrow 0$ so that  $b_{n}  \in \mathbf{F}_{\tilde{\delta}}^{\scriptscriptstyle \frac{\alpha-1}{2}}$ with $\lambda \neq \lambda(n)$.
Our assumptions concerning $\delta$ below are strict inequalities, so we can assume without loss of generality that $\tilde{\delta}=\delta$.

\medskip

\noindent\textbf{Main results.}\;Set $A: = (-\Delta)^{\frac{\alpha}{2}}$. Define constant $m_{d,\alpha}$ by the pointwise inequality
\begin{equation}
\label{A}
\tag{A.0}
\big|\nabla_y \big(\mu+A\big)^{-1}(x,y)| \leq m_{d,\alpha}\big(\kappa^{-1}\mu+A\big)^{-\frac{\alpha-1}{\alpha}}(x,y)
\end{equation}
for all $x,y \in \mathbb R^d$, $x \neq y$, $\mu>0$ for some $\kappa = \kappa_{d,\alpha}>0$
(for a simple estimate on $m_{d,\alpha}$ from above, see  Appendix \ref{app_est}, the proof of \eqref{estimation2}).

\begin{theorem}
\label{ThCinfty}
Let $d \geq 3$, $b \in \mathbf{F}_\delta^{\frac{\alpha-1}{2}}$ with  $\delta<m_{d,\alpha}^{-1}4\bigl[\frac{d-\alpha}{(d-\alpha+1)^2} \wedge \frac{\alpha(d+\alpha)}{(d+2\alpha)^2}\bigr]$. 
The following is true.

{\rm (\textit{i})} Set $\Lambda_{C_{\infty}}(b_{n})=(-\Delta)^{\frac{\alpha}{2}}+b_{n} \cdot\nabla$, $D(\Lambda_{C_{\infty}}(b_{n}))=\big(1+(-\Delta)^{\frac{\alpha}{2}}\big)^{-1}C_\infty$. The limit 
\begin{equation*}
s{\mbox-}C_{\infty}\mbox{-}\lim_{n}e^{-t\Lambda_{C_{\infty}}(b_{n})} \quad \text{{\rm(}loc.\,uniformly in $t \geq 0${\rm)}}
\end{equation*}
exists and determines a Feller semigroup $T^t=:e^{-t\Lambda_{C_{\infty}}(b)}$ whose generator $\Lambda_{C_\infty}$ is an appropriate operator realization of the formal operator $-(-\Delta)^{\frac{\alpha}{2}} - b \cdot \nabla$ in $C_\infty$.

\smallskip

{\rm (\textit{ii})} There exist $\mu_0>0$ and $p>d-\alpha+1$ such that, for all $\mu \geq \mu_0$,
$$\big[(\mu+\Lambda_{C_{\infty}}(b))^{-1} \upharpoonright C_\infty \cap L^{p}\big]^{\rm clos}_{L^p \rightarrow C_\infty} \in \mathcal{B}(L^{p},C_\infty),$$
and
$$\big(\mu+\Lambda_{C_\infty}(b)\big)^{-1}[C_\infty \cap L^p] \subset C^{0,\gamma}, \quad \gamma < 1-\frac{d-\alpha+1}{p}.$$

$e^{-t\Lambda_{C_\infty}(b)}$, $t>0$ is an integral operator.

\smallskip

{\rm (\textit{iii})} For every $p \in [2,p_+[$, $p_+=\frac{2}{1 - \sqrt{1-m_{d,\alpha}\delta}}$, and all $1<r<p<q<\infty$,
$$
(\mu + \Lambda_{C_\infty}(b))^{-1} \upharpoonright C_\infty \cap L^p \text{ extends by continuity to $\mathcal B\big(\mathcal W^{-\frac{\alpha-1}{r'},p},\mathcal W^{1+\frac{\alpha-1}{q},p}\big)$}.
$$

{\rm (\textit{iv})} $
\big(\mu+\Lambda_{C_\infty}(b)\big)^{-1} \upharpoonright C_\infty \cap L^2 \text{ extends by continuity to $\mathcal B(\mathcal W^{-\frac{\alpha-1}{2},2},\mathcal W^{\frac{\alpha+1}{2},2})$}.
$

\smallskip

{\rm (\textit{v})} $e^{-t\Lambda_{C_{\infty}}(b)}$ is conservative, i.e.\,$\int_{\mathbb R^d }e^{-t\Lambda_{C_{\infty}}(b)}(x,y)dy=1$\; $\forall\,x \in \mathbb R^d$.

\smallskip

Let $\{\mathbb{P}_x\}_{x \in \mathbb R^d}$ be the probability measures determined by $e^{-t\Lambda_{C_{\infty}}(b)}$.

\smallskip

{\rm (\textit{vi})} For every $x \in \mathbb R^d$ and $t>0$, $\mathbb E_{\mathbb P_x}\int_0^t |b(X_s)|ds<\infty$.

\smallskip

{\rm (\textit{vii})} There exists a process $Z_t$ with trajectories in $D([0,\infty[,\mathbb R^d)$, which is a symmetric $\alpha$-stable process under each $\mathbb P_x$, such that
$$
X_t=x-\int_0^t b(X_s)ds+Z_t, \quad t \geq 0.
$$

\end{theorem}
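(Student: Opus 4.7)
The strategy is to work from the smooth bounded approximations $b_n$, for which $\Lambda_{C_\infty}(b_n)$ is a classical Feller generator, install every estimate at the level of $b_n$ with constants depending only on the weak form-bound $\delta$ and $\lambda$, and then pass to the limit $n\to\infty$. The $L^2$ realization (Theorem~\ref{thmL2}) supplies a baseline generator, the $L^p$ regularity theory (Theorem~\ref{thmLp}) elevates it to a well-behaved semigroup on $C_\infty$, and the weighted $L^p\to L^\infty$ estimates of Proposition~\ref{lem_weights} deliver conservativeness and the SDE representation.

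For (\textit{i})-(\textit{iv}), I would combine the $L^p$ resolvent bounds of Theorem~\ref{thmLp} for some $p>d-\alpha+1$ with the Bessel-to-H\"older Sobolev embedding to obtain uniform $L^p\to C^{0,\gamma}$ bounds on $(\mu+\Lambda_{C_\infty}(b_n))^{-1}$ for $\mu\geq\mu_0$. Cauchy-ness in $n$ then follows from the second resolvent identity
$$
(\mu+\Lambda(b_n))^{-1}-(\mu+\Lambda(b_m))^{-1}=(\mu+\Lambda(b_n))^{-1}(b_n-b_m)\cdot\nabla(\mu+\Lambda(b_m))^{-1},
$$
together with smallness of $|b_n-b_m|^{1/2}(\lambda+A)^{-(\alpha-1)/(2\alpha)}$ in $\mathcal B(L^2)$, which the weak form-bound condition and pointwise convergence $b_n\to b$ supply. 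The Trotter-Kato theorem then produces a strong limit of the semigroups and, via Hille-Yosida, the Feller generator $\Lambda_{C_\infty}(b)$; the integral-operator claim in (\textit{ii}) follows from the uniform $L^p\to C_\infty$ bound. Parts (\textit{iii}) and (\textit{iv}) are then essentially re-statements of Theorem~\ref{thmLp} at the level of $b_n$ and the $L^p$ symmetric Markov inequalities of \cite{BS, LS} applied to the $L^p$ realizations, with the threshold $p_+=2/(1-\sqrt{1-m_{d,\alpha}\delta})$ dictating the range in which those inequalities produce the desired two-sided Bessel-potential bound compatible with $\delta$.

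For (\textit{v})-(\textit{vii}), Proposition~\ref{lem_weights} takes over. For (\textit{v}), I would test $(\mu+\Lambda_{C_\infty}(b))^{-1}$ on $(1-\chi_R)\in C_\infty$ with $\chi_R\uparrow\mathbf 1$ and use the weighted $L^p\to L^\infty$ estimate to drive $(\mu+\Lambda_{C_\infty}(b))^{-1}(1-\chi_R)\to 0$ uniformly, yielding $(\mu+\Lambda_{C_\infty}(b))^{-1}\mathbf 1=\mu^{-1}$ and hence $\int e^{-t\Lambda_{C_\infty}(b)}(x,y)dy=1$. For (\textit{vi}), the identity $\mathbb E_{\mathbb P_x}\int_0^t|b(X_s)|ds=\int_0^t(e^{-s\Lambda_{C_\infty}(b)}|b|)(x)ds$ is finite by applying the same weighted $L^p$ bound to $|b|$, which lies in the relevant weighted $L^p$ precisely because $b\in\mathbf F_\delta^{(\alpha-1)/2}$. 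For (\textit{vii}), once this integrability holds, I would set $Z_t:=X_t-x+\int_0^t b(X_s)ds$ and, running the weighted $L^p$ variant of \cite{PP, P}, verify via the martingale problem for the Feller generator (applied to test functions approximating $e^{i\varkappa\cdot y}$ in the graph norm using (\textit{iii}) and spatial cutoffs) that the characteristic function of $Z_t-Z_s$ conditionally on $\mathcal G_s$ equals $e^{-(t-s)|\varkappa|^\alpha}$, identifying $Z_t$ as a symmetric $\alpha$-stable process.

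The principal obstacle is Proposition~\ref{lem_weights} itself. Since $\alpha<2$ rules out the calculational techniques of \cite{KiS2}, the substitute is to realize the \emph{weighted} fractional Laplacian as a symmetric Markov generator via the $L^1$ accretivity method of \cite{KSS}, and then deploy the \cite{BS, LS} inequalities on the weighted $L^p$ space, leveraging that the pointwise inequality \eqref{A} is weight-independent. Conservativeness and every SDE-level conclusion rest on this estimate; once it is available, the Feller/SDE construction above proceeds routinely.
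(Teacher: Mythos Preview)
Your overall architecture matches the paper's: build the $L^2$ and $L^p$ theory first (Theorems~\ref{thmL2}, \ref{thmLp}), use the $L^p\to C^{0,\gamma}$ embedding for $p>d-\alpha+1$ to land in $C_\infty$, invoke Trotter to pass to the limit, and then rely on Proposition~\ref{lem_weights} for (\textit{v})--(\textit{vii}). Parts (\textit{iii})--(\textit{vi}) are handled essentially as in the paper.

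There is, however, a genuine gap in your convergence argument for (\textit{i}). You claim Cauchy-ness of $(\mu+\Lambda(b_n))^{-1}$ via the second resolvent identity together with smallness of $\||b_n-b_m|^{1/2}(\lambda+A)^{-(\alpha-1)/(2\alpha)}\|_{2\to 2}$. This operator norm does \emph{not} tend to zero for critically singular drifts. For the Hardy drift $b(x)=c|x|^{-\alpha}x$, the truncations $b_n-b_m$ are supported on shrinking annuli near the origin, and by scale invariance of the fractional Hardy inequality the weak form-bound of $|b_n-b_m|$ stays bounded away from zero. Pointwise convergence $b_n\to b$ together with a uniform form-bound gives only strong convergence $|b_n-b_m|^{1/2}(\lambda+A)^{-(\alpha-1)/(2\alpha)}f\to 0$ for fixed $f$, not norm convergence; and strong convergence alone does not close the second-resolvent argument, since in $(\mu+\Lambda(b_n))^{-1}(b_m-b_n)\cdot\nabla(\mu+\Lambda(b_m))^{-1}f$ the rightmost factor varies with $m$. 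The paper circumvents this entirely: it first constructs the limiting object $\Theta_p(\mu,b)$ \emph{explicitly} via the factored representation
\[
\Theta_p(\mu,b)=(\mu+A)^{-1}-(\mu+A)^{-\frac{1}{\alpha}+(-1+\frac{1}{\alpha})\frac{1}{q}}\,Q_p(q)\,(1+T_p)^{-1}\,G_p(r)\,(\mu+A)^{(-1+\frac{1}{\alpha})\frac{1}{r'}},
\]
where $T_p$, $G_p$, $Q_p$ are bounded on $L^p$ with norms controlled by $\delta$ alone, and then proves strong convergence $\Theta_p(\mu,b_n)f\to\Theta_p(\mu,b)f$ in $C_\infty$ by applying the Dominated Convergence Theorem to each factor and the $L^p\to L^\infty$ bound on the outer $(\mu+A)^{-\frac{1}{\alpha}+(-1+\frac{1}{\alpha})\frac{1}{q}}$. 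No smallness of the form-bound of $b_n-b_m$ is needed.

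A smaller point on (\textit{vii}): your description mixes two mechanisms. The paper does not use the martingale problem on graph-norm approximants of $e^{i\varkappa\cdot y}$; it follows \cite{PP,P} directly by deriving an integral equation for $w(t,x,\varkappa)=\mathbb E_x[e^{i\varkappa\cdot(X_t-\int_0^t b(X_s)ds)}]$, checking that $\tilde w(t,x,\varkappa)=e^{i\varkappa\cdot x-t|\varkappa|^\alpha}$ solves the same equation via the Duhamel formula (Corollary~\ref{aux_prop}(\textit{iv})), and then proving uniqueness by showing the integral operator is a contraction on the weighted space $L^p(\mathbb R^d,|b|\eta^{-p+2}d\mathcal L^d;L^\infty[0,T])$, which is exactly where the estimate \eqref{j_3_w} of Proposition~\ref{lem_weights} is used.
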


\begin{remark}
The domain of the Feller generator $D\big(\Lambda_{C_\infty}(b)\big)$ does not admit  description in elementary terms.
In particular, even for $b \in L^\infty(\mathbb R^d,\mathbb R^d) - C(\mathbb R^d,\mathbb R^d)$, $C_c^\infty \not\subset D\big(\Lambda_{C_\infty}(b)\big)$. 
\end{remark}

\begin{remark}
In Theorem \ref{thmLp}(\textit{vi}) below we show that $u=(\mu + \Lambda_{C_\infty}(b))^{-1}f$, $f \in C_\infty \cap L^p$ is a weak solution to the corresponding elliptic equation in $L^p$. 
\end{remark}

\begin{remark}
Having at hand Theorem \ref{ThCinfty}(\textit{i}) and the estimates of Proposition \ref{lem_weights} below, one can show repeating the argument in \cite{KiS2} that, for every $f \in C_c^\infty$, the process
$$
t \mapsto f(X_t)-f(x)+\int_0^t \bigl[ (-\Delta)^{\frac{\alpha}{2}}f(X_s) + (b \cdot \nabla f)(X_s)\bigr]ds, \quad t \geq 0, \quad X \in D([0,\infty[,\mathbb R^d), 
$$
is a $\mathbb P_x$-martingale. 
Repeating the corresponding argument in \cite{CKS} one can further show that $X_t$ has the same L\'{e}vy system as symmetric $\alpha$-stable process.
\end{remark}

\begin{remark}
\label{rem2}
It is not difficult to prove that the Feller property and property (\textit{iv}) determine $\{\mathbb P_x\}_{x \in \mathbb R^d}$ uniquely.
Precisely, suppose that for every $x \in \mathbb R^d$ we are given a weak solution $\mathbb Q_x$ to SDE \eqref{eq0}.
Define for every $f \in C_c^\infty$
$$
R^Q_\mu f(x):=\mathbb E_{\mathbb Q_x} \int_0^\infty e^{-\mu s} f(X_s)ds, \quad X_s \in D([0,\infty[,\mathbb R^d), \quad x \in \mathbb R^d, \quad \mu>\lambda_\delta.
$$ 
In Appendix \ref{u_app} we show that if $R^Q_\mu C_c^\infty \subset C_b$ and $R^Q_\mu \upharpoonright C_c^\infty$ admits extension by continuity to $\mathcal B(\mathcal W^{-\frac{\alpha-1}{2},2}, L^2)$, then $R_\mu^Qf=(\mu+\Lambda_{C_\infty}(b))^{-1}f$ ($f \in C_c^\infty$), and so $\{\mathbb Q_x\}_{x \in \mathbb R^d}=\{\mathbb P_x\}_{x \in \mathbb R^d}$. 
Alternatively, in the assumptions of Theorem \ref{ThCinfty}, one can repeat the proof of the uniqueness result in \cite{KiS2}: if $\{\mathbb Q_x\}_{x \in \mathbb R^d}$ are weak solutions to  SDE \eqref{eq0} obtained via a `reasonable' approximation procedure, i.e.\,
$$\mathbb Q_x=w{\mbox-}\lim_n \mathbb P_x(\tilde{b}_n), \quad x \in \mathbb R^d$$
such that the smooth vector fields $\tilde{b}_n$ are weakly form-bounded with the same weak form-bound $\delta$ (and $\lambda \neq \lambda(n)$), then $\{\mathbb Q_x\}_{x \in \mathbb R^d}=\{\mathbb P_x\}_{x \in \mathbb R^d}$. 
\end{remark}

In absence of the upper bound on the heat kernel $e^{-t\Lambda_{C_\infty}(b)}(x,y)$, the following weighted estimates play crucial role in the proof of Theorem \ref{ThCinfty}.
Set
$$\eta(x):=(1+|x|^2)^{\nu}, \quad 0<\nu<\frac{\alpha}{2}.$$
Denote $L^p_\eta:= L^p(\mathbb R^d, \eta^2 d\mathcal L^d)$, $\|\cdot\|_{p,\eta}^p:=\langle |\cdot|^p \eta ^2\rangle $. 

\begin{proposition}
\label{lem_weights}
\label{weight_lem}
Let $d \geq 3$, $b \in \mathbf{F}_\delta^{\frac{\alpha-1}{2}}$ with $\delta<m_{d,\alpha}^{-1}4\bigl[\frac{d-\alpha}{(d-\alpha+1)^2} \wedge \frac{\alpha(d+\alpha)}{(d+2\alpha)^2}\bigr]$. There exist $0<\nu<\alpha/2$, $p > (d-\alpha+1) \vee (\frac{d}{2\nu}+2)$ and $\mu_0>0$ such that
for every $h \in C_c$, $\mu \geq \mu_0$
\begin{align}
\label{j_1_w}
\tag{$E_1$}
\|\eta^{-1} (\mu+\Lambda_{C_\infty}(b))^{-1} \eta h\|_\infty & \leq K_1\|h\|_{p,\eta}, \\
\label{j_2_w}
\tag{$E_2$}
\|\eta^{-1} (\mu+\Lambda_{C_\infty}(b))^{-1}\eta |b_m| h\|_\infty & \leq K_2\| |b_m|^\frac{1}{p}  h\|_{p,\eta},  \\
\label{j_3_w}
\tag{$E_3$}
\|\eta^{-1} |b_m|^\frac{1}{p}(\mu+\Lambda_{C_\infty}(b))^{-1}\eta |b_m| h\|_{p,\eta} & \leq K_3 \| |b_m|^\frac{1}{p}  h\|_{p,\eta}, 
\end{align}
where $K_i>0$, $i=1,2,3$, do not depend on $m=1,2,\dots$ The constant $K_3$ can be chosen arbitrarily small at expense of increasing $\mu_0$.
\end{proposition}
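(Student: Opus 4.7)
The plan is to transfer the $L^p$ construction of $(\mu+\Lambda_{C_\infty}(b))^{-1}$ from the proof of Theorem \ref{thmLp} to the weighted space $L^p_\eta$. Formally conjugating $\Lambda_{C_\infty}(b)$ by multiplication by $\eta$ gives
$$
\tilde\Lambda \;:=\; \eta^{-1}\Lambda_{C_\infty}(b)\eta \;=\; A_\eta + b\cdot\nabla + V_\eta + W_\eta,
$$
where $A_\eta f := \eta^{-1}[A(\eta f)-(A\eta)f]$ is the Doob $h$-transform of $A=(-\Delta)^{\frac{\alpha}{2}}$ by $\eta$ (so $A_\eta 1=0$), $V_\eta := \eta^{-1}A\eta$, and $W_\eta := \eta^{-1}b\cdot\nabla\eta$. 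The restriction $\nu<\alpha/2$ forces $V_\eta$ to be bounded with $V_\eta(x)=O(|x|^{-\alpha})$ at infinity (since $A\eta\sim|x|^{2\nu-\alpha}$ while $\eta\sim|x|^{2\nu}$), and $\eta^{-1}\nabla\eta = 2\nu x/(1+|x|^2)\in L^\infty$ ensures $W_\eta\in\mathbf{F}^{\frac{\alpha-1}{2}}_{\delta}$ with the same $\delta$ (after a shift of $\lambda$). Under this reduction, $E_1$--$E_3$ become weighted analogs of the $L^p\to L^\infty$ and $L^p\to L^p$ resolvent bounds established in Theorem \ref{thmLp}, with $A_\eta$ replacing $A$.

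The first main step is to show that $A_\eta$ is a symmetric Markov generator on $L^2_\eta$, so that the $L^p$ inequalities of \cite{BS,LS} invoked in Theorem \ref{thmLp} apply verbatim on $L^p_\eta$. The associated quadratic form is
$$
\mathcal E_\eta(f,g) \;=\; \frac{1}{2}\iint[f(x)-f(y)][g(x)-g(y)]\,\eta(x)\eta(y)|x-y|^{-d-\alpha}\,dx\,dy,
$$
which is manifestly symmetric and Markovian; the identity $\langle A_\eta f,g\rangle_{L^2_\eta}=\mathcal E_\eta(f,g)$ follows by direct computation, giving symmetry. $L^1_\eta$-accretivity, needed for the semigroup property on the weighted scale, is non-trivial because $\eta$ is unbounded; here I would adapt the $L^1$-accretivity technique of \cite{KSS}, testing $(\mu+A_{\eta_k})f$ against $(\sign f)\,\eta_k^2$ for bounded cut-offs $\eta_k := \eta\wedge k$, exploiting positivity of the jump kernel to get $\|(\mu+A_{\eta_k})f\|_{1,\eta_k}\geq\mu\|f\|_{1,\eta_k}$, and passing to the limit $k\to\infty$. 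This is the main obstacle; once it is cleared the whole $L^p$ machinery of Appendix \ref{lp_sect} transfers to $L^p_\eta$.

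Next, I observe that the pointwise estimate \eqref{A} transfers to $A_\eta$ with the same constant $m_{d,\alpha}$: modulo the bounded perturbation $V_\eta$ (absorbed into $\mu$) one has $(\mu+A_\eta)^{-1}(x,y) = \eta(x)^{-1}(\mu+A)^{-1}(x,y)\eta(y)$, so the factor $\eta(y)$ cancels identically between the two sides of \eqref{A}. This is the weight-independence of \eqref{A} flagged in the introduction. With $A_\eta$ a symmetric Markov generator satisfying \eqref{A}, I then rerun the Neumann-series construction of Theorem \ref{thmLp} on $L^p_\eta$: the perturbation $b\cdot\nabla+V_\eta+W_\eta$ of $A_\eta$ is controlled by the weak form-bound $\delta$ through the $L^p_\eta$ Riesz-type bounds for $A_\eta$ coming from \cite{BS,LS}, yielding convergence of the series for $\mu\geq\mu_0$ and hence $E_3$ directly. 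Estimates $E_1$ and $E_2$ follow by upgrading the resulting $L^p_\eta$ bound to $L^\infty$ via a weighted Sobolev-type embedding, valid once $p > d/(2\nu)+2$. Uniformity in $m$ is automatic, since the mollifications $b_m$ lie uniformly in $\mathbf{F}^{\frac{\alpha-1}{2}}_\delta$.
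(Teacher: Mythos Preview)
Your overall strategy---transfer the $L^p$ resolvent machinery of Theorem~\ref{thmLp} to $L^p_\eta$ by showing a weighted fractional Laplacian is a symmetric Markov generator, then invoke the $L^p$ inequalities of Appendix~\ref{lp_sect} and finish with a weighted Sobolev embedding---is exactly what the paper does. You also correctly flag the weight-independence of \eqref{A} as the structural reason this works.

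The gap is in your third paragraph, where two incompatible choices of ``$A_\eta$'' are conflated. You take $A_\eta$ to be the Doob transform $A_\eta^{\rm Doob}f:=\eta^{-1}[A(\eta f)-(A\eta)f]$ (for which the Markov property is indeed immediate from the Dirichlet form, so the \cite{KSS} technique is superfluous there), but the kernel identity $(\mu+A_\eta)^{-1}(x,y)=\eta(x)^{-1}(\mu+A)^{-1}(x,y)\eta(y)$ that you invoke holds only for the \emph{simple conjugation} $A_\eta^{\rm conj}:=\eta^{-1}A\eta=A_\eta^{\rm Doob}+V_\eta$. Absorbing the bounded potential $V_\eta$ ``into $\mu$'' is fine for $L^p$ operator norms but does \emph{not} preserve pointwise gradient bounds on resolvent kernels. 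Concretely: rerunning Theorem~\ref{thmLp} with base $A_\eta^{\rm Doob}$ and drift $b\cdot\nabla$ requires $|\nabla_y(\mu+A_\eta^{\rm Doob})^{-1}(x,y)|\le m_{d,\alpha}(\kappa^{-1}\mu+A_\eta^{\rm Doob})^{-(\alpha-1)/\alpha}(x,y)$, and this does not follow from \eqref{A}; the claim ``$\eta(y)$ cancels identically'' fails because $\nabla_y$ also hits $\eta(y)$.

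The paper sidesteps this by conjugating the \emph{resolvent representation} $\Theta_p(\mu,b)$ of Theorem~\ref{thmLp}(\textit{i}) rather than the generator. Each block becomes, e.g., $T_{p,\eta}=\eta^{-1}b^{1/p}\cdot\nabla(\mu+A)^{-1}|b|^{1/p'}\eta$, with $\eta^{\pm1}$ appearing only as outer multiplications and the gradient still acting on the translation-invariant $(\mu+A)^{-1}$. Then \eqref{A} applies verbatim and yields $|T_{p,\eta}f|\le m_{d,\alpha}\,|b|^{1/p}(\kappa^{-1}\mu+A_\eta^{\rm conj})^{-(\alpha-1)/\alpha}|b|^{1/p'}|f|$ pointwise, with no $\nabla\eta$ correction and no spurious $W_\eta$ term (your $W_\eta$ is an artifact of splitting $\eta^{-1}(b\cdot\nabla)(\eta\,\cdot)$, which the paper never does). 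The price is that one must prove $\omega+A_\eta^{\rm conj}$---not $A_\eta^{\rm Doob}$---is a symmetric Markov generator on $L^2_\eta$; this is where the $L^1$-accretivity argument from \cite{KSS} is genuinely needed (Lemma~\ref{markov_lem}), and it is the main technical step of the proof.
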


\noindent\textbf{Acknowledgements.} We would like to express our gratitude to Yu.\,A.\,Sem\"{e}nov for fruitful discussions.

\tableofcontents

\section{Proof of Theorem \ref{ThCinfty}(\textit{i}), (\textit{ii})}

\textbf{1.~}Set $A := (-\Delta)^{\frac{\alpha}{2}}$.
Let  $b \in \mathbf{F}_\delta^{\frac{\alpha-1}{2}}$, $\delta>0$.
Define
$$H:=|b|^{\frac{1}{2}}(\bar{\zeta}+A)^{-\frac{\alpha-1}{2\alpha}}, \quad S:=b^{\frac{1}{2}} \cdot \nabla (\zeta+A)^{-\frac{\alpha+1}{2\alpha}}, \quad b^{\frac{1}{2}}:=b|b|^{-\frac{1}{2}}, \quad\Real\,\zeta \geq \lambda.$$ 

\begin{lemma}
$H$, $S \in \mathcal B(L^2)$ and $\|H^*S\|_{2 \rightarrow 2} \leq \delta.$
\end{lemma}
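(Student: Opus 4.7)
The plan is to prove the stronger statement that $\|H\|_{2\to 2}\le \sqrt\delta$ and $\|S\|_{2\to 2}\le\sqrt\delta$, from which $\|H^{\ast}S\|_{2\to 2}\le\|H\|_{2\to 2}\|S\|_{2\to 2}\le\delta$ follows automatically. Two ingredients will do all the work: the defining hypothesis $\||b|^{1/2}(\lambda+A)^{-\frac{\alpha-1}{2\alpha}}\|_{2\to 2}\le\sqrt\delta$ for membership in $\mathbf F_\delta^{\frac{\alpha-1}{2}}$, and the elementary bound $|\zeta+r|\ge\mathrm{Re}\,\zeta+r\ge\lambda+r$ for $r\in\sigma(A)=[0,\infty)$, which via the Spectral Theorem yields $\|(\lambda+A)^{s}(\zeta+A)^{-s}\|_{2\to 2}\le 1$ and $\|(\lambda+A)^{s}(\bar\zeta+A)^{-s}\|_{2\to 2}\le 1$ for every $s\in(0,1)$.

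For $H$ I would just factor
$$
H=\bigl[|b|^{1/2}(\lambda+A)^{-\frac{\alpha-1}{2\alpha}}\bigr]\bigl[(\lambda+A)^{\frac{\alpha-1}{2\alpha}}(\bar\zeta+A)^{-\frac{\alpha-1}{2\alpha}}\bigr],
$$
the first bracket of norm $\le\sqrt\delta$ by the form-bound hypothesis, the second of norm $\le 1$ by the spectral comparison above.

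For $S$ I would take $f$ in the Schwartz class, set $v:=(\zeta+A)^{-\frac{\alpha+1}{2\alpha}}f$, and start from the pointwise dot-product estimate
$$
\bigl|b^{1/2}\cdot\nabla v\bigr|^{2}\le|b^{1/2}|^{2}|\nabla v|^{2}=|b|\,|\nabla v|^{2}.
$$
Applying the form-bound coordinate-wise to $\partial_j v\in L^{2}$ and summing in $j$ (using that $\partial_j$, as a Fourier multiplier, commutes with $(\lambda+A)^{s}$),
$$
\|Sf\|_{2}^{2}\le\sum_{j=1}^{d}\int|b|\,|\partial_j v|^{2}\,dx\le\delta\,\bigl\|(\lambda+A)^{\frac{\alpha-1}{2\alpha}}\nabla(\zeta+A)^{-\frac{\alpha+1}{2\alpha}}f\bigr\|_{2}^{2}.
$$
Plancherel reduces the remaining norm to the supremum over $\xi$ of
$$
\frac{(\lambda+|\xi|^{\alpha})^{\frac{\alpha-1}{2\alpha}}|\xi|}{|\zeta+|\xi|^{\alpha}|^{\frac{\alpha+1}{2\alpha}}};
$$
setting $r=|\xi|^{\alpha}$ and invoking $|\zeta+r|\ge\lambda+r$ collapses this to $(r/(\lambda+r))^{1/\alpha}\le 1$. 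A density argument then extends $S$ from the Schwartz class to all of $L^{2}$ with $\|S\|_{2\to 2}\le\sqrt\delta$.

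The only points I would verify explicitly, neither a serious obstacle, are: (i) extension of the form-bound inequality, stated as an operator norm on (complex) $L^{2}$, to the complex-valued test function $\partial_j v$ (immediate by linearity, since $|b|^{1/2}$ and $(\lambda+A)^{s}$ are real-symbol multipliers, or equivalently by splitting into real and imaginary parts); and (ii) the well-definedness and expected mapping properties of the complex fractional powers $(\zeta+A)^{-s}$, which follow from $\mathrm{Re}\,\zeta\ge\lambda>0$ via the Borel functional calculus for the self-adjoint operator $A$.
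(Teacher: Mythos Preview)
Your proof is correct and follows essentially the same route as the paper. Both arguments establish $\|H\|_{2\to 2}\le\sqrt\delta$ and $\|S\|_{2\to 2}\le\sqrt\delta$ separately; for $S$, the paper factors $(\zeta+A)^{-\frac{\alpha+1}{2\alpha}}=(\zeta+A)^{-\frac{\alpha-1}{2\alpha}}(\zeta+A)^{-\frac{1}{\alpha}}$ and bounds $\||b|^{1/2}(\zeta+A)^{-\frac{\alpha-1}{2\alpha}}\|_{2\to 2}\le\sqrt\delta$ and $\|\nabla(\zeta+A)^{-\frac{1}{\alpha}}\|_{2\to 2}\le 1$ in two steps, whereas you carry out the equivalent Fourier-multiplier estimate in one go --- a purely cosmetic difference.
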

\begin{proof}
Indeed,
$\|H^*S\|_{2 \rightarrow 2} \leq  \|(\zeta+A)^{-\frac{\alpha-1}{2\alpha}}|b|^{\frac{1}{2}}\|_{2 \rightarrow 2} \| b^{\frac{1}{2}} \cdot \nabla (\zeta+A)^{-\frac{\alpha+1}{2\alpha}}\|_{2 \rightarrow 2},$
and
$$\|(\zeta+A)^{-\frac{\alpha-1}{2\alpha}}|b|^{\frac{1}{2}}\|_{2 \rightarrow 2} \leq \|(\Real\zeta+A)^{-\frac{\alpha-1}{2\alpha}}|b|^{\frac{1}{2}}\|_{2 \rightarrow 2} \leq \sqrt{\delta} \quad \text{ by } b \in \mathbf{F}_\delta^{\scriptsize \frac{\alpha-1}{2}},$$
 $$\|b^{\frac{1}{2}} \cdot \nabla (\zeta+A)^{-\frac{\alpha+1}{2\alpha}}\|_{2 \rightarrow 2} \leq \||b|^{\frac{1}{2}}  (\zeta +A)^{-\frac{\alpha-1}{2\alpha}}\|_{2 \rightarrow 2}\|\nabla (\zeta+A)^{-\frac{1}{\alpha}}\|_{2 \rightarrow 2} \leq \sqrt{\delta},$$
where we used $\|\nabla (\zeta+A)^{-\frac{1}{\alpha}}\|_{2 \rightarrow 2} \leq 1$ (for $\|\nabla g \|_{2}=\|A^{\frac{1}{\alpha}}g\|_{2}$ and $\|A^{\frac{1}{\alpha}}(\zeta +A)^{-\frac{1}{\alpha}}\|_{2 \rightarrow 2} \leq 1$ by the Spectral Theorem).
\end{proof}

Let $\delta<1$. Define operator-valued function
\begin{align*}
\Theta_2 (\zeta,b)& :=(\zeta +A )^{-\frac{\alpha+1}{2\alpha}}(1+H^*S)^{-1}(\zeta +A )^{-\frac{\alpha-1}{2\alpha}}  \\
&=(\zeta +A )^{-1}-(\zeta+A)^{-\frac{\alpha+1}{2\alpha}}H^{*}(1+SH^{*})^{-1}S(\zeta +A )^{-\frac{\alpha-1}{2\alpha}}
\in \mathcal B(L^2).
\end{align*}

\begin{theorem}[$L^2$ theory]
\label{thmL2}
Let  $b \in \mathbf{F}_\delta^{\frac{\alpha-1}{2}}$, $\delta<1$. 
The following is true:

\smallskip

(\textit{i}) There exists a closed densely defined operator $\Lambda_2(b)$ on $L^2$ such that
$$\Theta_2 (\zeta,b)=\bigl(\zeta + \Lambda_{2}(b)\bigr)^{-1},\quad \Real\zeta \geq \lambda.$$
$-\Lambda_2(b)$ is the generator of a quasi bounded holomorphic semigroup.

\medskip

{\rm(\textit{ii})} $(\mu+\Lambda_2(b))^{-1}$ extends by continuity to $\mathcal B(\mathcal W^{-\frac{\alpha-1}{2},2},\mathcal W^{\frac{\alpha+1}{2},2})$, $\mu \geq \lambda$.

\medskip

{\rm(\textit{iii})}
 $e^{-t\Lambda_{2}(b_{n})}\stackrel{\rm s}{\rightarrow} e^{-t\Lambda_{2}(b)}$ in $L^2$ locally uniformly in $t \geq 0$,

\smallskip

\noindent where $\Lambda_{2}(b_{n}):=A+b_{n} \cdot\nabla$, $D(\Lambda_{2}(b_{n}))=(1+A)^{-1}L^2$.
\end{theorem}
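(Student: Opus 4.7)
\textbf{Proof plan for Theorem \ref{thmL2}.}\;The plan is to exhibit $\Theta_2(\zeta,b)$ as a pseudo-resolvent on $L^2$ and to identify it with the resolvent of a closed densely defined operator, following the Hille--Trotter route already suggested by the two displayed formulas. First I would use the preceding lemma: since $\|H^*S\|_{2\to 2}\le\delta<1$, the Neumann series gives $(1+H^*S)^{-1}\in\mathcal B(L^2)$ with norm $\le(1-\delta)^{-1}$, so each of the two expressions for $\Theta_2(\zeta,b)$ is bounded on $L^2$ for $\Real\zeta\ge\lambda$; the two expressions agree via the standard identity $(1+H^*S)^{-1}=1-H^*(1+SH^*)^{-1}S$ (valid because $\|SH^*\|_{2\to 2}\le\delta<1$ as well, by the same estimate applied with $H$ and $S$ swapped, or by spectral mapping of $H^*S$ and $SH^*$). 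Combined with the analyticity of $\zeta\mapsto (\zeta+A)^{-\beta}$ on the half-plane $\Real\zeta>-\lambda_0$ for any $\lambda_0>0$, this also shows $\zeta\mapsto\Theta_2(\zeta,b)$ is analytic in a sector.

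Next I would verify the resolvent identity $\Theta_2(\zeta,b)-\Theta_2(\eta,b)=(\eta-\zeta)\Theta_2(\zeta,b)\Theta_2(\eta,b)$ by direct computation using the factored form and the resolvent identity for $A$; this makes $\Theta_2(\cdot,b)$ a pseudo-resolvent. To produce an actual resolvent I need trivial kernel: if $\Theta_2(\zeta,b)f=0$, then multiplying by $(\zeta+A)^{(\alpha+1)/(2\alpha)}$ on the left and using invertibility of $(1+H^*S)$ on $L^2$ gives $(\zeta+A)^{-(\alpha-1)/(2\alpha)}f=0$, whence $f=0$. Thus there is a unique closed densely defined $\Lambda_2(b)$ with $(\zeta+\Lambda_2(b))^{-1}=\Theta_2(\zeta,b)$, $\Real\zeta\ge\lambda$. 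The factored form also yields the sector bound $\|\Theta_2(\zeta,b)\|_{2\to 2}\le(1-\delta)^{-1}\|(\zeta+A)^{-1}\|_{2\to 2}$, which together with analyticity gives a quasi-bounded holomorphic semigroup by the Hille--Yosida--Phillips theorem. To see that $\Lambda_2(b)$ is an extension of the classical action $A+b\cdot\nabla$ on, say, $(1+A)^{-1}L^2\cap D$, with $D$ a suitable core of smooth compactly supported functions, I would pick $u=\Theta_2(\zeta,b)f\in D(\Lambda_2(b))$ and unravel the factorization to write $(\zeta+A)u+b\cdot\nabla u=f$ in the sense of distributions (for smooth enough $b$; the passage to general $b$ is via (\textit{iii})).

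For part (\textit{ii}) I would read off the mapping properties directly from the factored form: $(\zeta+A)^{-(\alpha-1)/(2\alpha)}$ is an isomorphism $\mathcal W^{-(\alpha-1)/2,2}\to L^2$, $(1+H^*S)^{-1}\in\mathcal B(L^2)$, and $(\zeta+A)^{-(\alpha+1)/(2\alpha)}:L^2\to \mathcal W^{(\alpha+1)/2,2}$ is an isomorphism. Composing, one obtains by continuous extension that $(\mu+\Lambda_2(b))^{-1}\in\mathcal B(\mathcal W^{-(\alpha-1)/2,2},\mathcal W^{(\alpha+1)/2,2})$ for $\mu\ge\lambda$.

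For part (\textit{iii}) I would apply the Trotter--Kato theorem. The hypothesis $b\in\mathbf F_\delta^{(\alpha-1)/2}$ and the construction of the mollifiers $b_n$ give $b_n\in\mathbf F_\delta^{(\alpha-1)/2}$ with a common $\lambda$ and hence $\|(1+H_n^*S_n)^{-1}\|_{2\to 2}\le(1-\delta)^{-1}$ uniformly in $n$; so the resolvents $\Theta_2(\zeta,b_n)$ are uniformly bounded in $\mathcal B(L^2)$ on each compact subset of $\{\Real\zeta\ge\lambda\}$. The principal analytic step, which I expect to be the main obstacle, is proving $s\text{-}L^2\text{-}\lim_n \Theta_2(\zeta,b_n)=\Theta_2(\zeta,b)$: one needs $H_n\to H$ and $S_n\to S$ strongly on suitable dense subsets (e.g.\ on functions in $(1+A)^{-(\alpha+1)/(2\alpha)}L^2$ or similar), combined with the uniform bounds to pass to the limit through the Neumann series and then through the $(1+SH^*)^{-1}$ factor. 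Once the resolvents converge strongly at a single $\zeta$, the Trotter--Kato theorem yields strong convergence of the semigroups, locally uniformly in $t\ge 0$, establishing (\textit{iii}).
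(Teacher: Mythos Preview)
Your proposal is essentially correct and uses the same ingredients as the paper (pseudo-resolvent theory plus Trotter approximation), but organized slightly differently. The paper works through the approximants $b_n$ from the start: it shows $\Theta_2(\zeta,b_n)=(\zeta+\Lambda_2(b_n))^{-1}$ for $\Lambda_2(b_n)=A+b_n\cdot\nabla$ via Hille perturbation (step~$1^\circ$), extends to all $\Real\zeta\ge\lambda$ via the pseudo-resolvent identity (steps~$2^\circ$--$3^\circ$), and then \emph{defines} $\Lambda_2(b)$ as the Trotter limit (steps~$4^\circ$--$6^\circ$), thereby obtaining (\textit{i}) and (\textit{iii}) simultaneously. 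You instead construct $\Lambda_2(b)$ directly from the pseudo-resolvent $\Theta_2(\cdot,b)$ and invoke Trotter--Kato only for (\textit{iii}). Both routes are valid; the paper's has the minor advantage that the identification $\Lambda_2(b_n)=A+b_n\cdot\nabla$ is built in, while yours isolates the construction of $\Lambda_2(b)$ from the approximation argument.

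One small gap to fill: trivial kernel of $\Theta_2(\zeta,b)$ alone gives a closed operator, not a densely defined one; by Hille's theorem on pseudo-resolvents you also need dense range. This is immediate from the factorization (each of $(\zeta+A)^{-(\alpha-1)/(2\alpha)}$, $(1+H^*S)^{-1}$, $(\zeta+A)^{-(\alpha+1)/(2\alpha)}$ has dense range in $L^2$), but it should be stated. Your ``multiplying by $(\zeta+A)^{(\alpha+1)/(2\alpha)}$'' for the kernel argument is informal; phrase it as injectivity of $(\zeta+A)^{-(\alpha+1)/(2\alpha)}$ on $L^2$. Finally, for step~$6^\circ$ the paper notes that strong convergence $\Theta_2(\zeta,b_n)\to\Theta_2(\zeta,b)$ follows from the definition and the Dominated Convergence Theorem (via $H_n^*S_n\to H^*S$ strongly, using $b_n\to b$ a.e.\ and the uniform form-bound); your sketch of this step is on the right track.
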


\begin{proof}
The proof follows closely the proof of \cite[Theorem 5.1]{S}, \cite[Theorem 4.4]{KiS} (there $\alpha=2$), and goes in several steps:

\begin{center}
$1^\circ$.~$(\zeta + \Lambda_{2}(b_{n}))^{-1}=\Theta_2 (\zeta , b_{n})$ for $\Real\zeta > c_n$, $c_n\uparrow \infty$, $n=1,2,\dots$

\medskip

$2^\circ$.~$\Theta (\zeta,b_{n})$ is a pseudo-resolvent on $\{\Real \zeta \geq \lambda\}$. 

\medskip

$3^\circ$.~$(\zeta + \Lambda_{2}(b_{n}))^{-1}=\Theta (\zeta , b_{n})$ for all $\Real \zeta \geq \lambda$.

\medskip

$4^\circ$.~$\|\Theta (\zeta,b_{n}) \|_{2 \rightarrow 2}  \leq (1-\delta)^{-1} |\zeta|^{-1}$, $\Real \zeta \geq \lambda$.

\medskip

$5^\circ$.~$\mu\Theta (\mu,b_{n})\stackrel{\rm s}{\rightarrow} 1$ in $L^{2}$ as $\mu \uparrow \infty$ uniformly in $n$.

\medskip

$6^\circ$.~$\Theta (\zeta,b_{n})\stackrel{\rm s}{\rightarrow}\Theta (\zeta,b)$ in $L^2$ for every $\Real\zeta \geq \lambda$.
\end{center}

Steps $3^\circ$-$6^\circ$ verify conditions of the 
Trotter Approximation Theorem (Appendix \ref{app_trotter}) $\Rightarrow$  Theorem \ref{thmL2}(\textit{i}), (\textit{iii}). (\textit{ii}) is immediate from (\textit{i}) and the definition of $\Theta(\mu,b)$.

\smallskip

Let us comment on the proof of $1^\circ$-$6^\circ$, referring to \cite[sect.\,4.2]{KiS} for details.

Proof of $1^\circ$. It is clear that $\|b_{n} \cdot \nabla (\zeta +A)^{-1}\|_{2 \rightarrow 2} \leq n \|(\zeta +A)^{-\frac{\alpha-1}{\alpha}}\|_{2 \rightarrow 2} \leq \frac{1}{2}$ for $0<\Real \zeta$ sufficiently large, so by the Hille Perturbation Theorem (see e.g.\,\cite[Ch.\,IX, sect.\,2.2]{Ka}), the algebraic sum $-\Lambda_{2}(b_{n}):=-(A+b_{n}\cdot\nabla)$, $D(\Lambda_{2}(b_{n}))=(1+A)^{-1}L^2$ generates holomorphic $C_0$ semigroup.
Comparing the Neumann series for $(\zeta + \Lambda_2(b_n))^{-1}$ with $\Theta_2(\eta,b_n)$, we obtain $1^\circ$.

The pseudo-resolvent identity $\Theta (\zeta,b_{n})-\Theta (\eta,b_{n})=(\eta - \zeta )\Theta (\zeta,b_{n})\Theta (\eta,b_{n})$, $\Real \zeta, \Real \eta \geq \lambda$ follows by direct calculations $\Rightarrow$ $2^\circ$.

Proof of $3^\circ$. By $2^\circ$, the null set and the range of $\Theta (\zeta,b_{n})$ do not depend on $\zeta$. By $1^\circ$, the common null set of $\Theta (\zeta,b_{n})$ is $\{0\}$, and the common range is dense in $L^2$.
Thus, by a theorem of E.\,Hille \cite[Sect.\,5.2]{HP},  \cite[Ch.\,VIII, sect.\,4]{Yos}, $\Theta (\zeta,b_{n})$, $\Real \zeta \geq \lambda$, is the resolvent of a densely defined operator which, by 1, must coincide with $\Lambda_2(b_n)$. 

Proof of $4^\circ$, $5^\circ$ follows from the definition of $\Theta_2(\zeta,b)$.

Proof of $6^\circ$ follows from the definition of $\Theta_2(\zeta,b)$ using the Dominated Convergence Theorem.
\end{proof}

\begin{remark*}
The semigroup $e^{-t\Lambda_2(b)}$ is only quasi bounded, so the fact that it is holomorphic is an indispensable element of the construction.
\end{remark*}

\textbf{2.~}Since $e^{-t\Lambda_{2}(b)}$ is a $L^\infty$ contraction (e.g.\,by Theorem \ref{thmL2}(\textit{ii})), 
one obtains (by interpolation) a consistent family 
of quasi bounded semigroups on $L^p$, $p \in [2,\infty[$, defined by
\begin{equation*}
e^{-t\Lambda_p(b)}:=\left[ e^{-t\Lambda_{2}(b)}\upharpoonright L^2 \cap C_{\infty}\right]_{p \rightarrow p}^{\rm clos} \in \mathcal{B}(L^{p}).
\end{equation*}
The generator $-\Lambda_{p}(b)$ is an appropriate operator realization of $-A-b \cdot \nabla$ in $L^p$.

Let $\delta$ satisfy $m_{d,\alpha}\delta<1$. Denote
$$
1<p_{\pm}:=\frac{2}{1 \mp \sqrt{1-m_{d,\alpha}\delta}}<\infty.
$$
Recall  $A=(-\Delta)^{\frac{\alpha}{2}}$.

\begin{theorem}[$L^p$ theory]
\label{thmLp}
Let $b \in \mathbf{F}_\delta^{\frac{\alpha-1}{2}}$, $m_{d,\alpha}\delta<1$. The following is true:

\smallskip

{\rm(\textit{i})} For every $p \in[2,p_+[$ the resolvent set of $-\Lambda_p(b)$ contains $\{\mu \geq \kappa\lambda\}$, and
 $$(\mu + \Lambda_{p}(b))^{-1}=\Theta_{p} (\mu,b),$$
where
\begin{align*} 
\Theta_{p} (\mu,b) & := (\mu +A)^{-1} -(\mu +A)^{-1/\alpha +(-1+\frac{1}{\alpha})/q}Q_{p}(q)(1+T_{p})^{-1}G_{p}(r)(\mu +A)^{ (-1+\frac{1}{\alpha})/r'} \in \mathcal B(L^p), 
\end{align*}
$$
G_{p}(r) := b^{\frac{1}{p}}\cdot \nabla (\mu + A)^{-1/\alpha +(-1+\frac{1}{\alpha})/r} \in \mathcal B(L^p), \quad r<p,
$$
$$
T_{p} :=b^{\frac{1}{p}}\cdot \nabla(\mu + A)^{-1}|b|^{\frac{1}{p'}} \text{ on } \mathcal{E}:=\cup_{\varepsilon>0}e^{-\varepsilon|b|}L^{p}, 
$$
$$
Q_{p}(q) := (\mu + A)^{(-1+\frac{1}{\alpha})/q'}|b|^{\frac{1}{p'}} \text{ on } \mathcal{E}, \quad q>p,
$$
operators $T_p$, $Q_p(q)$
admit extensions by continuity to $\mathcal B(L^p)$, which we denote again by $Q_p(q)$ and $T_p$; 
$$
\|T_p\|_{p \rightarrow p} \leq m_{d,\alpha}c_p\delta<1, \quad c_p:=\frac{pp'}{4},
$$ 
$$
\|G_p(r)\|_{p \rightarrow p} \leq M_{1,r}, \quad \|Q_p(q)\|_{p \rightarrow p} \leq M_{2,q}, 
$$
where constants $M_{1,q} \neq M_{1,q}(\mu)$, $M_{2,r} \neq M_{2,r}(\mu)$.

\smallskip

{\rm(\textit{ii})} From the definition of $\Theta_p(\mu,b)$, 
$$
(\mu + \Lambda_{p}(b))^{-1} \text{ extends by continuity to $\mathcal B\big(\mathcal W^{-\frac{\alpha-1}{r'},p},\mathcal W^{1+\frac{\alpha-1}{q},p}\big)$},
$$
$$D(\Lambda_{p}(b)) \subset \mathcal{W}^{1+\frac{\alpha -1}{q},p}, \quad q > p.$$ 
In particular, if $m_{d,\alpha}\delta < 4\frac{d-\alpha}{(d-\alpha+1)^{2}}$, there exists   $p \in ]d-\alpha+1,p_+[$ such that, by the Sobolev Embedding Theorem, $$D(\Lambda_{p}(b)) \subset C^{0,\gamma}, \quad \gamma < 1-\frac{d-\alpha+1}{p}.$$

\smallskip

{\rm(\textit{iii})}
$e^{-t\Lambda_{p}(b_{n})} \rightarrow e^{-t\Lambda_{p}(b)}$ in $L^p$ locally uniformly in $t \geq 0$,

\smallskip

\noindent where $\Lambda_{p}(b_{n}):=A+b_{n} \cdot\nabla$, $D(\Lambda_{p}(b_{n}))=(1+A)^{-1}L^p$.

\smallskip

{\rm(\textit{iv})}
$
\|e^{-t\Lambda_{r}(b)}\|_{r \rightarrow q} \leq c_r e^{\omega t}t^{-\frac{d}{\alpha}(\frac{1}{r}-\frac{1}{q})}$, $2 \leq r < q \leq \infty$, $\omega:=\frac{2\lambda}{r}.
$

\smallskip

{\rm(\textit{v})} $e^{-t\Lambda_{p}(b)}$, $t>0$ are integral operators.

\smallskip

{\rm(\textit{vi})} 
$ \langle \Lambda_p (b) u, v \rangle = \langle u, (-\Delta)^{\frac{\alpha}{2}} v \rangle + \langle b\cdot\nabla u, v \rangle$, $u \in D(\Lambda_p(b))$, $v \in C_c^\infty(\mathbb{R}^d)$.
\end{theorem}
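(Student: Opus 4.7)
\medskip

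\noindent \textbf{Proof proposal.} The strategy is to run the six-step Trotter construction of the proof of Theorem \ref{thmL2} one more time, this time in $L^p$, $p \in [2,p_+[$. A consistent family $e^{-t\Lambda_p(b)}$ on $L^p$ is already available by interpolation from Theorem \ref{thmL2}(iii) together with $L^\infty$-contractivity; what remains is to identify its resolvent with the explicit factorization $\Theta_p(\mu,b)$ (assertion (i)), whereupon (ii)--(vi) follow from the factored form together with standard approximation and smoothing arguments.

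The decisive input -- and the main obstacle -- is the $L^p$-boundedness of $T_p$ with constant $m_{d,\alpha}\,c_p\,\delta$, $c_p=pp'/4$. The idea is to use the pointwise kernel estimate \eqref{A} to replace the gradient factor in $T_p$ by a fractional power of $\mu+A$: for $f \in \mathcal E$,
\begin{equation*}
|T_p f(x)| \leq m_{d,\alpha}\,|b(x)|^{\frac{1}{p}}\,\bigl[(\kappa^{-1}\mu+A)^{-\frac{\alpha-1}{\alpha}}\bigl(|b|^{\frac{1}{p'}}|f|\bigr)\bigr](x).
\end{equation*}
The resulting sandwiched operator $|b|^{\frac{1}{p}}(\kappa^{-1}\mu+A)^{-\frac{\alpha-1}{\alpha}}|b|^{\frac{1}{p'}}$ is bounded on $L^p$ by the symmetric-Markov $L^p$ inequality of \cite{BS,LS} (Appendix \ref{lp_sect}), with norm at most $\frac{pp'}{4}\||b|^{\frac{1}{2}}(\kappa^{-1}\mu+A)^{-\frac{\alpha-1}{\alpha}}|b|^{\frac{1}{2}}\|_{2\to 2}$; the $L^2$ factor is $\leq \delta$ by weak form-boundedness (after rescaling $\mu$ to absorb $\kappa^{-1}$), giving $\|T_p\|_{p\to p}\leq m_{d,\alpha}(pp'/4)\delta$. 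Imposing $\|T_p\|_{p\to p}<1$ reduces to the quadratic $m_{d,\alpha}\delta\,p^2-4p+4<0$, whose roots are exactly $p_\pm$, whence the admissible range $]p_-,p_+[$. The bounds on $G_p(r)$ and $Q_p(q)$ are proved by the same recipe -- apply \eqref{A} where a gradient appears, then invoke the symmetric-Markov $L^p$ inequality, combined with the Heinz-Kato trick already used in Theorem \ref{thmL2}.

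With the $L^p$ bounds in hand, the resolvent identification $\Theta_p(\mu,b_n)=(\mu+\Lambda_p(b_n))^{-1}$ for the smooth approximations proceeds exactly as in steps $1^\circ$--$3^\circ$ of Theorem \ref{thmL2}: for smooth bounded $b_n$ the Hille perturbation theorem defines $\Lambda_p(b_n)$ on $(1+A)^{-1}L^p$, a Neumann-series comparison delivers the resolvent formula for $\Real\mu$ large, and direct algebra plus the Hille theorem (injectivity and dense range of $\Theta_p(\mu,b_n)$ read off from the factorization) extend the identification to $\{\mu\geq\kappa\lambda\}$. Since the bounds of Step 1 are uniform in $n$ (the mollifications $b_n$ lie in $\mathbf F_\delta^{\frac{\alpha-1}{2}}$ with the same $\delta$), a factor-by-factor dominated-convergence argument yields $\Theta_p(\mu,b_n)\stackrel{\rm s}{\rightarrow}\Theta_p(\mu,b)$ in $L^p$; coupled with $\mu\Theta_p(\mu,b_n)\stackrel{\rm s}{\rightarrow}1$ uniformly in $n$ as $\mu\uparrow\infty$, the Trotter approximation theorem produces a closed operator $\Lambda_p(b)$ with resolvent $\Theta_p(\mu,b)$ and delivers the semigroup convergence of (iii); consistency with the $L^2$ construction and its $L^p$ interpolation extension is immediate since both produce the same $\Theta_p$ on $L^2 \cap L^p$.

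Assertions (ii), (iv)--(vi) follow almost for free from the structure. The mapping (ii) is read off directly from the two outer fractional-power factors of $\Theta_p(\mu,b)$; the Hölder embedding is the Sobolev embedding $\mathcal W^{1+\frac{\alpha-1}{q},p}\hookrightarrow C^{0,\gamma}$, and the smallness condition $m_{d,\alpha}\delta<4(d-\alpha)/(d-\alpha+1)^2$ is exactly what forces $p=d-\alpha+1<p_+$, making a choice with $p>d-\alpha+1$ possible. Assertion (iv) is standard ultracontractivity obtained by interpolating the $L^2\to L^\infty$ bound for $e^{-tA}$ through the quasi-$L^p$-contraction of $e^{-t\Lambda_p(b)}$, e.g.\,via a Coulhon-type extrapolation from the resolvent bounds in (i); (v) is then the usual kernel extraction from off-diagonal $L^r\to L^q$ smoothing. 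For (vi) I test $(\mu+A+b_n\cdot\nabla)u_n=f$ against $v\in C_c^\infty$, move $A$ onto $v$ by its symmetry (noting $Av \in L^{p'}$), and pass to the limit in $n$ using (iii) and $b_n\to b$ in $L^1_{\loc}$ against the smooth compactly supported $v$.
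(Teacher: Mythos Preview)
Your overall strategy is sound, but the paper takes a shorter path and one of your technical devices is misnamed.

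\textbf{The paper's shortcut versus your Trotter rerun.} The paper does \emph{not} rerun the six-step Trotter argument in $L^p$. The semigroup $e^{-t\Lambda_p(b)}$ is already defined before the theorem by interpolation (from the $L^2$ semigroup and its $L^\infty$-contractivity), so its generator $\Lambda_p(b)$ exists from the outset. The paper then only (a) proves the norm bounds on $T_p$, $G_p(r)$, $Q_p(q)$ so that $\Theta_p(\mu,b)$ is well defined in $\mathcal B(L^p)$, and (b) observes the consistency $\Theta_2(\mu,b)\upharpoonright L^2\cap L^p=\Theta_p(\mu,b)\upharpoonright L^2\cap L^p$, which immediately forces $(\mu+\Lambda_p(b))^{-1}=\Theta_p(\mu,b)$. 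Likewise (\textit{iii}) is obtained not from Trotter but from the elementary interpolation $\|g\|_p^p\leq\|g\|_\infty^{p-2}\|g\|_2^2$ together with the $L^2$ convergence of Theorem~\ref{thmL2}(\textit{iii}) and $L^\infty$-contractivity. Your full Trotter-in-$L^p$ route is legitimate (the paper's Remark~\ref{rem_hol} explicitly acknowledges it, and it buys you the holomorphy of $e^{-t\Lambda_p(b)}$ and the extended range $p\in]p_-,p_+[$), but it is more laborious than what is actually needed here.

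\textbf{The bounds on $G_p(r)$ and $Q_p(q)$.} Your phrase ``Heinz--Kato trick already used in Theorem~\ref{thmL2}'' does not match either the paper or the situation. Theorem~\ref{thmL2} does not use Heinz--Kato, and Heinz--Kato is not what is needed to estimate $G_p(r)$ and $Q_p(q)$. The difficulty is that the exponents appearing (e.g.\ $(-1+\tfrac{1}{\alpha})/q'$ in $Q_p(q)$) are \emph{not} the exponent $-\tfrac{\alpha-1}{\alpha}$ handled by Lemma~\ref{abc_lem}(\textbf{a})--(\textbf{c}). The paper bridges this via the fractional-power integral representation
\[
(\mu+A)^{-\tau}=\frac{\sin\pi\tau}{\pi}\int_0^\infty t^{-\tau}(t+\mu+A)^{-1}\,dt,\qquad 0<\tau<1,
\]
which reduces the estimate to an integral in $t$ of the bounds (\textbf{a}) or (\textbf{c}) applied at shifted spectral parameter $t+\mu$; convergence of the resulting integral is exactly where the strict inequalities $r<p<q$ are used. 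Your account of $T_p$ is correct and matches the paper verbatim; for $G_p(r)$ and $Q_p(q)$ you should replace ``Heinz--Kato'' by this integral formula. The remaining points (\textit{ii}), (\textit{iv})--(\textit{vi}) are handled in the paper essentially as you describe.
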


\begin{proof} The proof follows closely the proof of \cite[Theorem 4.4]{KiS}. 

(\textit{i}) We will use crucially the estimates (\textbf{a})-(\textbf{c}) of Lemma \ref{abc_lem} in Appendix \ref{lp_sect} (there $V:=|b|$). For all $f \in \mathcal E$
\begin{align*}
\|T_{p}f \|_{p} & =\|b^{\frac{1}{p}}\cdot \nabla (\mu +A)^{-1}|b|^{\frac{1}{p'}}f\|_{p} \\
& (\text{we are using \eqref{A}}) \\
&\leq m_{d,\alpha} \||b|^{\frac{1}{p}}(\kappa^{-1}\mu +A)^{-\frac{\alpha-1}{\alpha}}|b|^{\frac{1}{p'}}f\|_{p} \\ 
&(\text{we are using (\textbf{b})}) \\
&\leq  m_{d,\alpha} c_{p}\delta\|f\|_{p}, 
\end{align*}
where $m_{d,\alpha} c_{p}\delta<1$ since $p \in ]p_-,p_+[$.

In order to estimate $\|G_{p}(r)|_{p \rightarrow  p}$ and $\|Q_{p}(q)\|_{p \rightarrow  p}$, we will need the formula
\begin{equation}
\label{eq}
\tag{$\ast\ast$}
(\mu+A)^{-\tau}=\frac{\sin \pi \tau}{\pi}\int_{0}^{\infty}t^{-\tau}(t+\mu+A)^{-1}dt, \quad 0 < \tau < 1.
\end{equation}
Let  $f \in \mathcal{E}$ and  $\mu \geq \lambda$. We have for $q>p$
\begin{align*}
\|Q_{p}(q)f\|_{p} &=  \|(\mu +A)^{(-1+\frac{1}{\alpha})/q'}|b|^{\frac{1}{p'}}f\|_{p}\\
&(\text{we are using \eqref{eq}})  \\
&\leq   k_{\alpha,q} \int_{0}^{\infty} t^{(-1+\frac{1}{\alpha})/q'}\|(t+\mu +A)^{-1}|b|^{\frac{1}{p'}}f\|_{p}dt \\
&\leq   k_{\alpha,q}\int_{0}^{\infty}t^{(-1+\frac{1}{\alpha})/q'}(t+\mu)^{-\frac{1}{\alpha}}\|(t+\mu +A)^{-1+\frac{1}{\alpha}}|b|^{\frac{1}{p'}}f\|_{p}dt \\
&(\text{we are using (\textbf{c})}) \\
&\leq  k_{\alpha,q}(c_{p}\delta)^{1/p}\left(\int_{0}^{\infty}t^{(-1+\frac{1}{\alpha})/q'}(t+\mu)^{-\frac{1}{\alpha}+(-1+\frac{1}{\alpha})/p}dt\right) \|f\|_{p} \\
&= M_{2,q} \|f\|_{p},
\end{align*}
where, clearly  $M_{2,q}  < \infty$ because $q > p$.

For every $\mu \geq \lambda$ and  $r < p$ we have
\begin{align*}
\|G_{p}(r)f\|_{p}&=\|b^{\frac{1}{p}}\cdot \nabla (\mu + A)^{-1/\alpha +(-1+\frac{1}{\alpha})/r}f\|_{p}\\
&(\text{we are using \eqref{estimation2} in Appendix \ref{app_est}})\\
& \leq  c_{d,\gamma}\||b|^{\frac{1}{p}}(\mu + A)^{(-1+\frac{1}{\alpha})/r}f\|_{p}\\
& (\text{we are using  \eqref{eq}})\\
&\leq   c_{d,\gamma}k_{\alpha, r}\int_{0}^{\infty}t^{(-1+\frac{1}{\alpha})/r}\||b|^{\frac{1}{p}}(t+\mu + A)^{-1}|f|\|_{p}dt\\
& \leq  c_{d,\gamma}k_{\alpha, r}\int_{0}^{\infty}t^{(-1+\frac{1}{\alpha})/r}\||b|^{\frac{1}{p}}(t+\mu + A)^{-1+\frac{1}{\alpha}}\|_{p \rightarrow p} \|(t+\mu +A)^{-\frac{1}{\alpha}}|f|\|_{p} dt \\
&(\text{we are using (\textbf{a})})\\
& \leq  c_{d,\gamma}k_{\alpha, r}(c_{p}\delta)^{1/p}\left(\int_{0}^{\infty}t^{(-1+\frac{1}{\alpha})/r}(t+\mu)^{-\frac{1}{\alpha}+(-1+\frac{1}{\alpha})/p'}dt\right) \|f\|_{p}\\
&=  M_{1,r} \|f\|_{p},
\end{align*}
where $M_{1,r} < \infty$ because $r < p$.

Thus, $\Theta_p(\mu,b)$ is well defined.
Now, we have
\begin{equation*}
\Theta_2(\mu,b) \upharpoonright L^2 \cap L^p=\Theta_p(\mu,b) \upharpoonright L^2 \cap L^p, \quad \mu \geq \kappa\lambda, \quad p \in ]p_-,p_+[.
\end{equation*}
By the construction of $e^{-t\Lambda_p(b)}$, the latter yields (\textit{i}).

\smallskip

Clearly, (\textit{i}) $\Rightarrow$ (\textit{ii}).

\smallskip

(\textit{iii}) For every $f \in L^2 \cap L^\infty$, $$\|e^{-t\Lambda_{p}(b)}f-e^{-t\Lambda_{p}(b_{n})}f\|_p^p \leq \|e^{-t\Lambda_{p}(b)}f-e^{-t\Lambda_{p}(b_{n})}f\|_\infty^{p-2}\|e^{-t\Lambda_{p}(b)}f-e^{-t\Lambda_{p}(b_{n})}f\|_2^2,$$ and so the convergence follows from $e^{-t\Lambda_p(b)} \upharpoonright L^2 \cap L^p = e^{-t\Lambda_2(b)} \upharpoonright L^2 \cap L^p$, the $L^\infty$ contractivity of $e^{-t\Lambda_2(b)}$, $e^{-t\Lambda_2(b_n)}$, and the $L^2$ convergence of Theorem \ref{thmL2}(\textit{ii}).

(\textit{iv}) The proof repeats the proof of \cite[Theorem 4.3]{KiS}. 

(\textit{iv}) $\Rightarrow$ (\textit{v}) by Gelfand's Theorem.

 (\textit{vi}) The proof repeats the proof of \cite[Theorem 1.3(\textit{v})]{Kin}.
\end{proof}

\begin{remark}
\label{rem_hol}
One can use the operator-valued function $\Theta_p(\zeta,b)$ to \textit{construct} $\Lambda_p(b)$. Then in Theorem \ref{thmLp} one can take $p \in ]p_-,p_+[$, and show that $e^{-t\Lambda_p(b)}$ is holomorphic, see \cite[sect.\,4]{KiS} for details. However, keeping in mind possible extension of this method to more general operators, in this paper we carry out the ``minimal'' argument needed to construct associated Feller semigroup.
\end{remark}

\textbf{3.~}We are in position to complete the proof of Theorem \ref{ThCinfty}(\textit{i}), (\textit{ii}).
Let $m_{d,\alpha}\delta < 4\frac{(d-\alpha)}{(d-\alpha+1)^{2}}$.

The proof follows closely the proof of \cite[Theorem 4.5]{KiS}. Fix $p \in ]d-\alpha + 1, p_+[$. By Theorem \ref{thmLp},
$\Theta_p(\mu,b)(L^p \cap C_\infty) \subset C_\infty$ and $\|\Theta_p(\mu,b)f\|_\infty \leq \mu^{-1}\|f\|_\infty$, $f \in L^p \cap C_\infty$.
Thus, we can define
\begin{equation}
\label{Cinfty_id}
\Theta_{C_{\infty}} (\mu,b)=\big[ \Theta_{p} (\mu,b) \upharpoonright L^{p}\cap C_{\infty}\big]_{C_{\infty} \rightarrow C_{\infty}}^{\rm clos} \in \mathcal{B}(C_{\infty}).
\end{equation}
In several steps:
\begin{center}
1.~ $\mu\Theta_{C_{\infty}} (\mu,b)\overset{s}{\rightarrow} 1$ in $C_{\infty}$ as $\mu \rightarrow \infty$.

\smallskip

2.~$\Theta_{C_\infty}(\mu,b)$ is a pseudo-resolvent on $\{\mu \geq \kappa\lambda\}$.

\smallskip

3.~$(\mu+\Lambda_{C_\infty}(b_n))^{-1} \upharpoonright L^{p}\cap C_{\infty} = \Theta_p(\mu,b_n) \upharpoonright L^{p}\cap C_{\infty}$, $\mu \geq \kappa\lambda$.

\smallskip

4.~$\Theta_p(\mu,b_n)f \overset{s}{\rightarrow} \Theta(\mu,b)f$ in $C_{\infty}$, $\mu \geq \kappa\lambda$, for every $f \in L^{p}\cap C_{\infty}$.
\end{center}

\smallskip

\noindent Steps 1 and 2 yield: By Hille's theorem on pseudo-resolvents \cite[Sect.\,5.2]{HP},  \cite[Ch.\,VIII, sect.\,4]{Yos}, $\Theta_{C_\infty}(\mu,b)$
is the resolvent of a densely defined closed operator $\Lambda_{C_\infty}$ on $C_\infty$. In view of \eqref{Cinfty_id},
$\|(\mu+\Lambda_{C_\infty}(b))^{-1}\|_{\infty \rightarrow \infty} \leq \mu^{-1}$, and since $e^{-t\Lambda_p(b)}$ preserves positivity,  $-\Lambda_{C_\infty}$ generates a Feller generator.
Now, Steps 3 and 4 together with the Trotter Approximation Theorem yield Theorem \ref{ThCinfty}(\textit{i}).
Theorem \ref{ThCinfty}(\textit{ii}) is a direct consequence of Theorem \ref{thmLp}(\textit{ii}).

Let us comment on the proof of 1-4. 
Proof of 1 and 4 follows directly from the definition of $\Theta_p(\mu,b)$ using
$$
\|(\mu  + A)^{-\frac{1}{\alpha}+(-1+\frac{1}{\alpha})\frac{1}{q}}\|_{p \rightarrow \infty} \leq c\mu^{-\frac{1}{\alpha}+\frac{d}{p\alpha}+(-1+\frac{1}{\alpha})\frac{1}{q}}, \quad c< \infty.
$$
Proof of 2 follows from the resolvent identity for $\Theta_p(\mu,b)$. Proof of 3 follows by Theorem \ref{thmLp}.

\section{Proof of Theorem \ref{ThCinfty}(\textit{iii}), (\textit{iv})}

Assertions (\textit{iii}) and (\textit{iv}) of Theorem \ref{ThCinfty} follow immediately from Theorem \ref{thmLp}(\textit{ii}) and Theorem \ref{thmL2}(\textit{i}), respectively, since 
$(\mu+\Lambda_{C_\infty}(b))^{-1} \upharpoonright C_\infty \cap L^p = (\mu+\Lambda_{p}(b))^{-1} \upharpoonright C_\infty \cap L^p $, $(\mu+\Lambda_{C_\infty}(b))^{-1} \upharpoonright C_\infty \cap L^2 = (\mu+\Lambda_{2}(b))^{-1} \upharpoonright C_\infty \cap L^2$.

\section{Proof of Proposition \ref{lem_weights} (weighted estimates)}

\label{weight_sect}

We will use the resolvent representation of Theorem \ref{thmLp} but will consider it in the weighted space   
$$L^p_\eta = L^p(\mathbb R^d, \eta^2 d\mathcal L^d) \quad \text{ with norm }\|\cdot\|_{p,\eta}^p:=\langle |\cdot|^p \eta ^2\rangle, \quad p \in ]p_-,p_+[,
$$
$$
 \eta(x):=(1+|x|^2)^{\nu}, \quad 0<\nu<\frac{\alpha}{2}.$$
Denote $A = (-\Delta)^{\frac{\alpha}{2}}$. It is seen that
$0 \leq A_\eta:=\eta^{-1}A\eta$
is self-adjoint and
$e^{-tA_\eta}=\eta^{-1}e^{-tA}\eta$ in $L^2_\eta$.

\begin{lemma}
\label{markov_lem}
There exists $\omega>0$ such that $\omega+A_{\eta}$ is a symmetric Markov generator on $L^2_\eta$.
\end{lemma}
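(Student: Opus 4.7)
The plan is to verify the Beurling--Deny criteria for $\omega+A_\eta$ on $L^2_\eta$: symmetry and non-negativity of the operator, positivity preservation of the semigroup, and $L^\infty$-contractivity of the semigroup. Symmetry and non-negativity follow at once from the intertwining $e^{-tA_\eta}=\eta^{-1}e^{-tA}\eta$: multiplication by $\eta$ is a unitary $L^2_\eta\to L^2$ under which $A_\eta$ is conjugated to $A$, which is self-adjoint and non-negative. Positivity preservation of $e^{-tA_\eta}$ is equally immediate, since $\eta>0$ pointwise and $e^{-tA}$ preserves positivity. The only non-trivial condition is the sub-Markov property of $e^{-t(\omega+A_\eta)}$ for a suitable $\omega>0$.

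Unfolding the intertwining, sub-Markovianity is equivalent to the pointwise estimate $(e^{-tA}\eta)(x)\leq e^{\omega t}\eta(x)$, $x\in\mathbb R^d$, $t\geq 0$. By the parabolic comparison principle applied to the supersolution $\phi(t,x):=e^{\omega t}\eta(x)$ of $(\partial_t+A)u=0$, this reduces in turn to the pointwise lower bound
$$
A\eta(x)\geq -\omega\,\eta(x),\qquad x\in\mathbb R^d, \qquad(\dagger)
$$
for $\omega$ sufficiently large; comparison is legitimate in the class of functions of polynomial growth slower than $|x|^\alpha$, which accommodates both $e^{\omega t}\eta$ and $e^{-tA}\eta$ since $2\nu<\alpha$. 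Thus it remains to establish $(\dagger)$.

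To prove $(\dagger)$ one computes $A\eta$ directly from its L\'evy-type integral representation
$$
A\eta(x)=c\,\mathrm{p.v.}\!\int_{\mathbb R^d}\frac{\eta(x)-\eta(x+y)}{|y|^{d+\alpha}}\,dy.
$$
Because $\eta$ grows like $|y|^{2\nu}$ at infinity and $2\nu<\alpha$, the tail of the integrand is $O(|y|^{2\nu-d-\alpha})$, integrable at infinity; near $y=0$, smoothness of $\eta$ makes the principal value well defined. A scaling and dominated-convergence argument then yields $A\eta(x)=O(|x|^{2\nu-\alpha})$ as $|x|\to\infty$, while $A\eta$ is bounded on compacts by smoothness of $\eta$. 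Dividing by $\eta(x)\asymp(1+|x|)^{2\nu}$ gives $A\eta/\eta\in L^\infty(\mathbb R^d)$, whence $(\dagger)$ holds with $\omega:=\sup_x\bigl(-A\eta(x)/\eta(x)\bigr)_+<\infty$. The main obstacle is precisely this uniform estimate on $A\eta/\eta$: the principal-value integral must be controlled across all scales, including intermediate ones where neither the origin expansion nor the asymptotic scaling is sharp. This is the weighted analogue of the $L^1$-accretivity calculation for non-local operators carried out in \cite{KSS}. With $(\dagger)$ established, the Beurling--Deny criterion yields that $\omega+A_\eta$ is a symmetric Markov generator on $L^2_\eta$.
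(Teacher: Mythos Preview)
Your approach is correct and reaches the same core pointwise estimate as the paper, but by a genuinely different route.

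The paper dualizes: to obtain the $L^\infty$-contraction of $e^{-t(\omega+A_\eta)}$ it proves the $L^1$-contraction of $\eta\,e^{-t(\omega+A)}\eta^{-1}$. To make this rigorous it truncates the weight, $\eta_n:=\theta_n(\eta)$, and shows that $\omega+\eta_n A\eta_n^{-1}$ is $L^1$-accretive via Lumer--Phillips. The key inequality is $\langle |u|,A\eta_n\rangle\geq -C\|u\|_1$, obtained from the representation $(-\Delta)^{\alpha/2}\eta_n=-I_{2-\alpha}\Delta\eta_n$ together with an explicit Riesz-potential estimate giving $\|I_{2-\alpha}\Delta\eta_n\|_\infty\leq C$ uniformly in $n$; one then passes to the limit $n\to\infty$ by Fatou. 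Your route stays on the $L^\infty$ side throughout and avoids truncation: you reduce directly to $A\eta\geq -\omega\eta$ (equivalently $A\eta/\eta\in L^\infty$), compute $A\eta$ from the L\'evy representation with a scaling argument rather than through $I_{2-\alpha}\Delta\eta$, and then invoke a parabolic comparison principle for $\partial_t+A$ in the class of functions of growth $o(|x|^\alpha)$ to conclude $e^{-tA}\eta\leq e^{\omega t}\eta$.

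Your argument is conceptually cleaner, and the pointwise bound you isolate is morally the same one the paper proves (your $\|A\eta/\eta\|_\infty<\infty$ corresponds to the paper's uniform bound on $A\eta_n$, since $\eta_n\geq 1$). The trade-off is that the comparison principle for the fractional heat equation with unbounded data is a nontrivial external input which you assert rather than justify; the paper's truncation-plus-Lumer--Phillips machinery is longer but entirely self-contained and sidesteps this issue. If you keep your approach, you should either cite a precise comparison/uniqueness theorem for $\partial_t+(-\Delta)^{\alpha/2}$ in the growth class $o(|x|^\alpha)$, or supply a short truncation argument of your own to bridge from $(\dagger)$ to $e^{-tA}\eta\leq e^{\omega t}\eta$.
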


\begin{proof}
We only need to show that $e^{-t(\omega+A_\eta)}$ is a $L^\infty$ contraction.
By duality, it suffices to prove $$\|\eta e^{-t(\omega+A)}\eta^{-1}f\|_{1} \leq \|f\|_1, \quad f\in L^1.$$
We employ the method introduced in \cite{KSS}. 
Define truncated weights
$
\eta_n:=\theta_n(\eta)$, $n \geq 1,
$
where 
$$
\theta(s):=\left\{
\begin{array}{ll}
s, & 0<s<1, \\
2, & s>2,
\end{array}
\right.
\quad \theta \in C^2(]0,\infty[), \quad \text{ and } \quad \theta_n(s):=n\theta(s/n).
$$
In $L^1$, define  $$Q = Q_n:=\eta_n A_1 \eta_n^{-1},  \;\; D(Q)=\eta_n (1+A)^{-1}L^1, \qquad \text{ and } \quad F_{n}^t:=\eta_n e^{-tA_1} \eta_n^{-1}.$$ Since $\eta_n$, $\eta_n^{-1} \in L^\infty$, these operators are well defined. In particular, $F_n^t$ are bounded $C_0$ semigroups on $L^1$. Denote by $-G = -G_n$ the generator of $F_n^t$, so that $F_n^t=e^{-tG}$.

Let $C_u=\{f \in C(\mathbb R^d) \mid f \text{ is bounded uniformly continuous}\}$. Set
$$
M:=\eta_n(1+A)^{-1}[L^1 \cap C_u]=\eta_n(\mu+A)^{-1}[L^1 \cap C_u], \quad \mu>0.
$$
Then $M \subset D(Q)$ and $M \subset D(G)$. We have $Q \upharpoonright M \subset G$:
$$
Gf=s\mbox{-}L^1\mbox{-}\lim_{t\downarrow 0}t^{-1}(1-e^{-tG})f=\eta_n s\mbox{-}L^1\mbox{-}\lim_{t\downarrow 0}t^{-1}(1-e^{-tA})u=\phi_n A u=Qf.
$$
Thus $Q\upharpoonright M$ is closable and $\tilde{Q}:=(Q\upharpoonright M)^{\rm clos}\subset G$.

\medskip

\textbf{Claim 1.}~The range $R(\mu+\tilde{Q})$, $\mu>0$, is dense in $L^1$. 

\begin{proof}[Proof of Claim 1]Suppose that for some $v \in L^\infty$, $\langle (\mu+\tilde{Q})h,v\rangle=0$ for all $h \in D(\tilde{Q})$. In particular, $\langle (\mu+Q)h,v\rangle=0$ for all $h \in M$, i.e.
$$
\langle (\mu+Q)\eta_n(\mu+A)^{-1}g,v\rangle=0, \quad g \in L^1 \cap C_u,
$$
and so $\langle \eta_n g,v\rangle=0$ for all $g \in L^1 \cap C_u$. The latter clearly implies that $v = 0$, and so $R(\mu+\tilde{Q}$) is dense in $L^1$.
\end{proof}

\textbf{Claim 2.}~There exists $0<\omega\neq \omega(n)$ such that $\omega + \tilde{Q}$ is accretive in $L^1$, i.e.
\begin{equation}
\label{main_est}
\tag{$\bullet$}
\Real\big\langle \big(\omega+ \tilde{Q}\big)f,\frac{f}{|f|}\big\rangle \geq 0, \quad f \in D(\tilde{Q}).
\end{equation}

\begin{proof}[Proof of Claim 2]
For $f=\eta_n u  \in M$, we have
\begin{align*}
\langle Qf,\frac{f}{|f|}\rangle& =\langle \eta_n A_1 u,\frac{f}{|f|}\rangle=\lim_{t\downarrow 0}t^{-1}\langle\eta_n(1-e^{-tA_1})u,\frac{f}{|f|}\rangle \qquad e^{-tA_1}u=e^{-tA_{C_u}}u\\
&=\lim_{t\downarrow 0}t^{-1}\langle\eta_n(1-e^{-tA_{C_u}})u,\frac{f}{|f|}\rangle, \\
\Real\langle Qf,\frac{f}{|f|}\rangle &\geq\lim_{t\downarrow 0}t^{-1}\langle(1-e^{-tA_{C_u}})|u|,\eta_n\rangle\\
&=\lim_{t\downarrow 0}t^{-1}\langle|u|,(1-e^{-tA_{C_u}})\eta_n\rangle\\
&=\langle  |u|,A_{C_u}\eta_n\rangle,
\end{align*}
where, at the last step, we have used that $e^{-tA_{C_u}}$ is a holomorphic semigroup on $C_u$ and so, since, $\eta_n \in D(-\Delta_{C_u})$,
$A\eta_n = A_{C_u}\eta_n$ is well defined and belongs to $C_u$. 

We are going to estimate $J:=\langle  |u|,A\eta_n\rangle $ from below using the representation $(-\Delta)^{\frac{\alpha}{2}}\eta_n=-I_{2-\alpha}\Delta \eta_n$ where $I_{\vartheta}=(-\Delta)^{-\frac{\vartheta}{2}}$ denotes the Riesz potential.
We have
$$
\Delta \eta_n=\theta_n''(\eta)(\nabla \eta)^2 + \theta_n'(\eta)\Delta \eta,
$$
\begin{align*}
\nabla \eta= \nu(1+|x|^2)^{\nu-1}2x, \qquad \Delta \eta(x)=\nu d (1+|x|^2)^{\nu - 2}\left[1+ \left(\frac{2(\nu -1)}{d}+1\right)|x|^2\right].
\end{align*}
Thus, $|\nabla \eta|^2 \leq  C_1(1+|x|^{2})^{2\nu -1}$, $|\Delta \eta (x) | \leq C_2(1+|x|^{2})^{\nu -1}$.
Using that $|\theta_n'| \leq c_1$, and that $\theta_n''$ has support in $\{n<s<2n\}$, $|\theta_n''| \leq c_2/n$, for constants $c_1,c_2$,
we obtain that 
$$
|\Delta \eta_n| \leq C_0  (1+|x|^{2})^{\nu -1}, \quad C_0<\infty.
$$
Now, direct calculations show that, since $0 < \nu < \frac{\alpha}{2}$, 
$$
|I_{2-\alpha}\Delta \eta_n (x)|=\frac{C_0}{\gamma(2-\alpha)}\int_{\mathbb{R}^{d}}\frac{(1+|y|^{2})^{\nu -1}}{|x-y|^{d-2+\alpha}}dy<\infty \quad \text{ for all } x \in \mathbb R^d,
$$
and is continuous in $x$ on any compact set.
Moreover, we have ($|y|>1$)
\begin{align*}
\gamma(2-\alpha)C_0^{-1}\limsup_{r \rightarrow \infty}\sup_{|x|=r}|I_{2-\alpha}\Delta \eta_n (x)|  & \leq \limsup_{r \rightarrow \infty}\sup_{|x|=r}\int_{\mathbb{R}^{d}}\frac{(1+|y|^{2})^{\nu -1}}{|x-y|^{d-2+\alpha}}dy \\
& \text{(put $x=er$ with $e \in \mathbb{R}^{d}$) }\\
& \leq   \limsup_{r \rightarrow \infty} \int_{\mathbb{R}^{d}}\frac{(1+|y|^{2})^{\nu -1}}{|er-y|^{d-2+\alpha}}dy \\
&\leq \limsup_{r \rightarrow \infty} r^{-d+2-\alpha} \int_{\mathbb{R}^{d}}\frac{(1+ r^{2}|z|^{2})^{\nu -1}}{|e-z|^{d-2+\alpha}}r^{d}dz \qquad (z=y/r)\\
& \text{(we are using $r^{2}|z|^2 \leq 1+  r^{2}|z|^{2}$ and $\nu<\frac{\alpha}{2}<1$)}\\
& \leq \limsup_{r \rightarrow \infty} r^{2\nu -\alpha}\int_{\mathbb{R}^{d}}\frac{|z|^{2(\nu -1)}}{|e-z|^{d-2+\alpha}}dz,
\end{align*}
where, clearly, $\int_{\mathbb{R}^{d}}\frac{|z|^{2(\nu -1)}}{|e-z|^{d-2+\alpha}}dz<\infty$. 
It follows that $\limsup_{r \rightarrow \infty}\sup_{|x|=r}|I_{2-\alpha}\Delta \eta_n (x)|=0$ uniformly in $n$.

We conclude that there exists a constant $0<C \neq C(n)$ such that $|I_{2-\alpha}\Delta \eta_n | \leq C $. So, $J \geq -C \|u\|_{1} \geq -C \|\eta_n^{-1}f \|_{1} \geq -C_1\ \|f \|_{1}$.
Putting $\omega = C_1$, we arrive at 
$\Real\langle (\omega + Q)f,\frac{f}{|f|}\rangle  \geq 0$. The latter clearly holds for all $f \in D(\tilde{Q})$, i.e.\,we have proved that $\omega+\tilde{Q}$ is accretive on $L^1$.
\end{proof}

Claims 1 and 2 together with the fact that $\tilde{Q}$ is closed yield: $R(\mu + \tilde{Q})=L^1$. Then by the Lumer-Phillips Theorem $\omega + \tilde{Q}$ is the (minus) generator of a contraction $C_0$ semigroup on $L^1$, $\|e^{-t(\omega + \tilde{Q})}\|_{1 \rightarrow 1} \leq 1$. Since $\tilde{Q} \subset G$, this semigroup must coincide with $F^t_n=\eta_n e^{-tA_1} \eta_n^{-1}$. It follows that 
\begin{equation*}
\|\eta_n e^{-tA} \eta_n^{-1}f\|_{1} \leq e^{\omega t}\|f\|_1, \quad f \in L^1,
\end{equation*}
and so, using e.g.\,Fatou's Lemma, we obtain 
$$\|\eta e^{-tA} \eta^{-1}f\|_{1} \leq e^{\omega t}\|f\|_1.$$
The proof of Lemma \ref{markov_lem} is completed.
\end{proof}

\begin{lemma}
\label{claim2}
Let $b \in  \mathbf{F}_\delta^{\scriptsize \frac{\alpha-1}{2}}$, $\delta>0$. Then for every $\mu > \kappa(\omega \vee \lambda)$ and all $p \in ]1,\infty[$,
$$
G_{p,\eta}(r):=\eta^{-1} b^{\frac{1}{p}}\cdot \nabla (\mu+A)^{(-1+\frac{1}{\alpha})\frac{1}{r}}\eta\in \mathcal B(L^p_\eta), \quad r<p,$$ 
the operators
$$T_{p,\eta}:=\eta^{-1}b^{\frac{1}{p}} \cdot \nabla  (\mu+A)^{-1}|b|^{\frac{1}{p'}}\eta \quad \text{ on } \mathcal E=\bigcup_{\varepsilon>0}e^{-\varepsilon|b|}L^p_\eta,$$
$$ Q_{p,\eta}(q):=(\mu+A_\eta)^{(-1+\frac{1}{\alpha})\frac{1}{q'}}|b|^{\frac{1}{p'}} \quad \text{ on } \mathcal E,
\quad q>p,
$$
admit extension by continuity to $\mathcal B(L^p_\eta)$, and
$$
\|T_{p,\eta}\|_{p,\eta \rightarrow p,\eta}  \leq m_{d,\alpha} c_p\delta,
$$
$$
\|G_{p,\eta}(r)\|_{p,\eta \rightarrow p,\eta} \leq M_{1,r}, \quad \|Q_{p,\eta}(q)\|_{p,\eta \rightarrow p,\eta} \leq M_{2,q}
$$
for $M_{1,r} \neq M_{1,r}(\mu)$, $M_{2,q} \neq M_{2,q}(\mu)$.
\end{lemma}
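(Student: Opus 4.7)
The plan is to adapt the proof of Theorem \ref{thmLp}(\textit{i}) to the weighted space $L^p_\eta$, using the fact (Lemma \ref{markov_lem}) that $\omega+A_\eta$ is a symmetric Markov generator on $L^2_\eta$, and replacing $A$ by $A_\eta$ throughout. The first ingredient is that the weak form-boundedness of $b$ transfers from $A$ on $L^2$ to $A_\eta$ on $L^2_\eta$ with the same constant $\delta$: the identity $e^{-tA_\eta}=\eta^{-1}e^{-tA}\eta$ on $L^2_\eta$ (used in the proof of Lemma \ref{markov_lem}) extends, via the subordination formula $(\mu+A)^{-s}=\Gamma(s)^{-1}\int_0^\infty t^{s-1}e^{-t(\mu+A)}\,dt$, to the operator identity $(\mu+A_\eta)^{-s}=\eta^{-1}(\mu+A)^{-s}\eta$ on $L^2_\eta$ for all $s>0$; since $f\mapsto\eta f$ is an isometric isomorphism $L^2_\eta\to L^2$ that commutes with multiplication by $|b|^{1/2}$,
$$
\bigl\||b|^{\frac{1}{2}}(\mu+A_\eta)^{-\frac{\alpha-1}{2\alpha}}\bigr\|_{L^2_\eta\to L^2_\eta}=\bigl\||b|^{\frac{1}{2}}(\mu+A)^{-\frac{\alpha-1}{2\alpha}}\bigr\|_{L^2\to L^2}\leq\sqrt{\delta},\quad \mu\geq\lambda.
$$

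The second ingredient is the application of the abstract $L^p$ inequalities of Appendix \ref{lp_sect} — valid for any symmetric Markov generator — to $\omega+A_\eta$ on $L^2_\eta$ with $V=|b|$. This yields the verbatim analogues of estimates (\textbf{a}), (\textbf{b}), (\textbf{c}) of Lemma \ref{abc_lem} with $A$ replaced by $A_\eta$ (up to the shift by $\omega$) and $\|\cdot\|_p$ replaced by $\|\cdot\|_{p,\eta}$, the constant $c_p\delta$ on the right-hand side being unchanged.

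The third and final ingredient is that the pointwise kernel estimate \eqref{A} is independent of the reference measure. Applying it to $T_{p,\eta}f$ for $f\in\mathcal E$ and using that the multiplications $\eta^{\pm 1}$, $|b|^{1/p}$, $|b|^{1/p'}$ commute among themselves,
$$
|T_{p,\eta}f|\leq m_{d,\alpha}\eta^{-1}|b|^{\frac{1}{p}}(\kappa^{-1}\mu+A)^{-\frac{\alpha-1}{\alpha}}|b|^{\frac{1}{p'}}\eta|f|=m_{d,\alpha}|b|^{\frac{1}{p}}(\kappa^{-1}\mu+A_\eta)^{-\frac{\alpha-1}{\alpha}}|b|^{\frac{1}{p'}}|f|,
$$
so the weighted (\textbf{b}) yields $\|T_{p,\eta}f\|_{p,\eta}\leq m_{d,\alpha}c_p\delta\|f\|_{p,\eta}$, and density of $\mathcal E$ in $L^p_\eta$ extends $T_{p,\eta}$ by continuity. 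The bound on $G_{p,\eta}(r)$ is analogous, with the pointwise gradient estimate \eqref{estimation2} in place of \eqref{A}. For $Q_{p,\eta}(q)$ we use the subordination formula \eqref{eq} to express $(\mu+A_\eta)^{(-1+1/\alpha)/q'}$ as an integral of resolvents, then follow the proof of Theorem \ref{thmLp}(\textit{i}) verbatim, now using the weighted (\textbf{c}) together with $\|(t+\mu+A_\eta)^{-1/\alpha}\|_{p,\eta\to p,\eta}\leq(t+\mu-\omega)^{-1/\alpha}$ (valid for $\mu>\omega$ since $\omega+A_\eta\geq 0$ and $e^{-s(\omega+A_\eta)}$ is $L^p_\eta$-contractive).

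The main obstacle is the first ingredient — correctly transferring the weak form-boundedness from the unweighted $L^2$ setting to $L^2_\eta$ with the Markov generator $\omega+A_\eta$. Once this identification is secured and the abstract $L^p$ inequalities are in hand, the remaining estimates become a direct transcription of the proof of Theorem \ref{thmLp}(\textit{i}), with the caveat that $\mu$ must be taken larger than $\kappa(\omega\vee\lambda)$ to absorb the shift $\omega$ introduced by the weight.
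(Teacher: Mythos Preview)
Your proposal is correct and follows essentially the same approach as the paper: transfer the weak form-boundedness to $A_\eta$ on $L^2_\eta$ via the unitary conjugation $f\mapsto\eta f$, apply Lemma \ref{abc_lem} to the Markov generator $\omega+A_\eta$ to obtain the weighted analogues $(\mathbf a')$--$(\mathbf c')$, and then repeat the proof of Theorem \ref{thmLp}(\textit{i}) using the pointwise estimates \eqref{A} and \eqref{estimation2} (which, as you note, are measure-independent). The paper carries out the $G_{p,\eta}(r)$ computation in full as the representative case, but the structure is identical to what you describe.
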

\begin{proof}
Let us note that
$b \in \mathbf{F}_\delta^{\scriptsize \frac{\alpha-1}{2}}$ is equivalent to
\begin{equation*}
\||b|^{\frac{1}{2}}(\lambda+A_\eta)^{-\frac{1}{2}+\frac{1}{2\alpha}}\|_{2,\eta \rightarrow 2,\eta} \leq \sqrt{\delta}, \quad  \lambda=\lambda_\delta.
\end{equation*}
Thus, in view of Lemma \ref{markov_lem}, we can apply Lemma \ref{abc_lem} (Appendix \ref{lp_sect}) to obtain
\begin{align}
\||b|^{\frac{1}{p}}(\mu+A_\eta)^{-1+\frac{1}{\alpha}}\|_{p,\eta \rightarrow p,\eta} & \leq (c_{p}\delta)^{1/p}\mu^{(-1+\frac{1}{\alpha})/p'}, \tag{$\mathbf a'$} \label{a_eta} \\
\||b|^{\frac{1}{p}}(\mu+A_\eta)^{-1+\frac{1}{\alpha}}|b|^{\frac{1}{p'}}\|_{p,\eta \rightarrow p,\eta} & \leq c_p\delta, \tag{$\mathbf b'$} \label{b_eta} \\
\|(\mu +A_\eta )^{-1+\frac{1}{\alpha}}|b|^{\frac{1}{p'}}f\|_{p,\eta \rightarrow p,\eta}  &\leq (c_{p}\delta)^{1/p'}\mu^{(-1+\frac{1}{\alpha})/p} \tag{$\mathbf c'$} \label{c_eta}. 
\end{align}
Now, using pointwise estimates \eqref{A} and \eqref{estimation2} as in the proof of Theorem \ref{thmLp}(\textit{i}), we obtain the assertion of Lemma \ref{claim2}. For example, let us show that 
$\|G_{p,\eta}(r)\|_{p,\eta \rightarrow p,\eta} \leq M_{1,r}$.
We will need the formula
\begin{equation}
\label{eq_eta}
\tag{$\ast\ast'$}
(\mu+A_\eta)^{-\tau}=\frac{\sin \pi \tau}{\pi}\int_{0}^{\infty}t^{-\tau}(t+\mu+A_\eta)^{-1}dt, \quad 0 < \tau < 1.
\end{equation}
Let  $f \in C_c^\infty$ and  $\mu > \kappa(\lambda \vee \omega)$. We have for all $r < p$ (without loss of generality, $\kappa \geq 1$)
\begin{align*}
\|G_{p,\eta}(r)f\|_{p,\eta}&=\|\eta^{-1} b^{\frac{1}{p}}\cdot \nabla (\mu + A)^{-1/\alpha +(-1+\frac{1}{\alpha})/r} \eta f\|_{p,\eta}\\
&(\text{we are using \eqref{estimation2}})\\
& \leq  c_{d,\gamma}\|\eta^{-1}|b|^{\frac{1}{p}}(\mu + A)^{(-1+\frac{1}{\alpha})/r}\eta f\|_{p,\eta}= c_{d,\gamma}\||b|^{\frac{1}{p}}(\mu + A_\eta)^{(-1+\frac{1}{\alpha})/r} f\|_{p,\eta}\\
& (\text{we are using  \eqref{eq_eta}})\\
&\leq   c_{d,\gamma}k_{\alpha, r}\int_{0}^{\infty}t^{(-1+\frac{1}{\alpha})/r}\||b|^{\frac{1}{p}}(t+\mu + A_\eta)^{-1}|f|\|_{p,\eta}dt\\
& \leq  c_{d,\gamma}k_{\alpha, r}\int_{0}^{\infty}t^{(-1+\frac{1}{\alpha})/r}\||b|^{\frac{1}{p}}(t+\mu + A_\eta)^{-1+\frac{1}{\alpha}}\|_{p,\eta \rightarrow p,\eta} \|(t+\mu +A_\eta)^{-\frac{1}{\alpha}}|f|\|_{p,\eta} dt \\
&(\text{we are using \eqref{a_eta}})\\
& \leq  c_{d,\gamma}k_{\alpha, r}(c_{p}\delta)^{1/p}\left(\int_{0}^{\infty}t^{(-1+\frac{1}{\alpha})/r}(t+\mu-\omega)^{-\frac{1}{\alpha}+(-1+\frac{1}{\alpha})/p'}dt\right) \|f\|_{p,\eta}\\
&=  M_{1,r} \|f\|_{p,\eta},
\end{align*}
where $M_{1,r} < \infty$ because $r < p$. 
\end{proof}

By  Lemma \ref{claim2}, if $p \in ]p_-,p_+[$, then $\|T_{p,\eta}\|_{p,\eta \rightarrow p,\eta} \leq m_{d,\alpha} c_p\delta <1$. Using the resolvent representation of Theorem \ref{thmLp}(\textit{i}), we obtain for all $\mu \geq \kappa\lambda$, $p \in [2,p_+[$
\begin{align}
\label{repr_eta}
\eta^{-1}(\mu+\Lambda_p(b))^{-1}\eta&=(\mu+A_\eta)^{-1} \tag{$\star\star$} \\
&- (\mu+A_\eta)^{-\frac{1}{\alpha}+(-1+\frac{1}{\alpha})\frac{1}{q}} Q_{p,\eta}(q)(1+T_{p,\eta})^{-1}G_{p,\eta}(r)(\mu+A_\eta)^{(-1+\frac{1}{\alpha})\frac{1}{r'}} \in \mathcal B(L^p_\eta). \notag
\end{align}

We will need 

\begin{lemma}
\label{lem_a}
Assume that $\delta<m_{d,\alpha}^{-1}4\bigl[\frac{d-\alpha}{(d-\alpha+1)^2} \wedge \frac{\alpha(d+\alpha)}{(d+2\alpha)^2}\bigr]$. Then there exist a $\nu<\frac{\alpha}{2}$ close to $\frac{\alpha}{2}$, a $p \in \big](d-\alpha+1) \vee (\frac{d}{2\nu}+2),p_+\big[$ and
a $q>p$ close to $p$ such that
$$\|(\mu+A_\eta)^{-\frac{1}{\alpha}+(-1+\frac{1}{\alpha})\frac{1}{q}}h\|_\infty \leq C\|h\|_{p,\eta}, \quad h \in C_c,$$
for  a constant $C=C(\alpha, \nu, q, p)$.
\end{lemma}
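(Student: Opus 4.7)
The strategy is to convert the stated inequality into an $L^{p'}$-estimate for the kernel of $(\mu+A)^{-s}$ against a suitable weight, after factoring out $\eta$, and then exploit standard Riesz-potential-type bounds near the diagonal together with polynomial decay at infinity. Put $s:=\frac{1}{\alpha}+\frac{\alpha-1}{\alpha q}\in(0,1)$, so that the operator is $(\mu+A_\eta)^{-s}$ and $\alpha s=1+\frac{\alpha-1}{q}$. From $A_\eta=\eta^{-1}A\eta$ one has $(\mu+A_\eta)^{-1}=\eta^{-1}(\mu+A)^{-1}\eta$, and \eqref{eq_eta} applied to both sides promotes this to $(\mu+A_\eta)^{-s}=\eta^{-1}(\mu+A)^{-s}\eta$. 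Writing $K(x,y):=(\mu+A)^{-s}(x,y)$ and applying H\"{o}lder's inequality with the pairing dictated by $\|h\|_{p,\eta}^p=\int|h|^p\eta^2$ (i.e.\ the weight $\eta^{(p-2)/(p-1)}$) yields
\begin{equation*}
\bigl|(\mu+A_\eta)^{-s}h(x)\bigr|\leq \eta(x)^{-1}\Bigl(\int K(x,y)^{p'}\eta(y)^{(p-2)/(p-1)}\,dy\Bigr)^{1/p'}\|h\|_{p,\eta},
\end{equation*}
so everything reduces to the uniform estimate $I(x):=\int K(x,y)^{p'}\eta(y)^{(p-2)/(p-1)}\,dy\leq C\eta(x)^{p'}$.

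Next I would establish the pointwise kernel bound
\begin{equation*}
K(x,y)\leq C\bigl(|x-y|^{\alpha s-d}\mathbf{1}_{|x-y|<1}+|x-y|^{-(d+\alpha)}\mathbf{1}_{|x-y|\geq 1}\bigr)
\end{equation*}
via Bochner subordination: $K(x,y)=\frac{1}{\Gamma(s)}\int_0^\infty t^{s-1}e^{-\mu t}p_t^{(\alpha)}(x-y)\,dt$ combined with the two-sided bound $p_t^{(\alpha)}(z)\asymp t(t^{1/\alpha}+|z|)^{-d-\alpha}$ and a routine split of the $t$-integral at $t=|x-y|^\alpha$. This gives the Riesz-potential singularity near the diagonal and, for $|x-y|$ large, the polynomial tail $|x-y|^{-(d+\alpha)}$, with the regime $t>|x-y|^\alpha$ being exponentially damped by $e^{-\mu t}$ when $\mu\geq\mu_0>0$.

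Parameter choice: since $p>d-\alpha+1$, one can take $q>p$ close to $p$ so that $\alpha s=1+\frac{\alpha-1}{q}>d/p$, which is precisely the Sobolev condition $(\alpha s-d)p'>-d$ needed to make $K^{p'}$ locally integrable, and identifies $p>d-\alpha+1$ as the relevant threshold. The auxiliary condition $p>\frac{d}{2\nu}+2$, for $\nu$ close to $\alpha/2$, only needs to keep the interval $](d-\alpha+1)\vee(\frac{d}{2\nu}+2),p_+[$ nonempty (which is precisely what the hypothesis on $\delta$ guarantees). To bound $I(x)$, split the $y$-integration into: (i) $\{|x-y|<1\}$; (ii) $\{|x|/2\leq|y|\leq 2|x|,\ |x-y|\geq 1\}$; (iii) $\{|y|\leq|x|/2\}\cup\{|y|\geq 2|x|\}$. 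In (i) and (ii), $\eta(y)\asymp\eta(x)$ and $K^{p'}$ is integrable (by the Sobolev condition in (i), by $(d+\alpha)p'>d$ in (ii)), giving contributions $\lesssim\eta(x)^{(p-2)/(p-1)}\leq\eta(x)^{p'}$. In (iii) one has $|x-y|\gtrsim|x|\vee|y|$, and a direct polynomial calculation produces a bound of order $|x|^{-(d+\alpha)p'+2\nu(p-2)/(p-1)+d}$; the inequality $-(d+\alpha)p'+2\nu(p-2)/(p-1)+d\leq 2\nu p'$ reduces, upon multiplying by $p-1$, to $\alpha p+d\geq -4\nu$, which is trivial, so this contribution is also $\lesssim|x|^{2\nu p'}\asymp\eta(x)^{p'}$.

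The main obstacle is the weight bookkeeping in the far region (iii), where the $(d+\alpha)p'$-decay of the kernel must be pitted against the polynomial growth $\eta(y)^{(p-2)/(p-1)}\lesssim(1+|y|)^{2\nu(p-2)/(p-1)}$ while still yielding a bound no worse than $\eta(x)^{p'}\asymp|x|^{2\nu p'}$; once this is reduced as above to a trivial inequality relying only on $2\nu<\alpha$, the uniform estimate $I(x)\lesssim \eta(x)^{p'}$ and hence the lemma follow with a constant depending only on $\alpha,\nu,q,p$ (and $\mu_0$).
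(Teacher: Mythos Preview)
Your proof is correct and follows essentially the same approach as the paper: the same pointwise kernel bound $(\mu+A)^{-\tau}(x,y)\lesssim |x-y|^{\alpha\tau-d}\wedge|x-y|^{-d-\alpha}$, the same H\"older split (the paper's factor $\eta^{1-2/p}$ in the $p'$-norm is exactly your weight $\eta^{(p-2)/(p-1)}$ inside the integral), and the same near/far decomposition with $\eta(y)\asymp\eta(x)$ on $B(x,1)$. The only real difference is in the far region: the paper handles $K(x)=\eta^{-1}(x)\||x-\cdot|^{-d-\alpha}\mathbf{1}_{B^c(x,1)}\eta^{1-2/p}\|_{p'}$ for large $|x|$ by a scaling substitution $x=er$, $y\mapsto ry$, whereas you use the dyadic split into $\{|y|\sim|x|\}$ and its complement; both are routine and equivalent. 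One small cleanup: your region (iii) as written can overlap $\{|x-y|<1\}$ when $|x|$ is small, so the far-field kernel bound is not directly applicable there---but those points are already absorbed in region (i), so simply intersect (iii) with $\{|x-y|\geq1\}$ (or treat $|x|\leq 2$ separately, where $\eta\asymp 1$ and the bound is immediate).
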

\begin{proof}
Set $\tau:=\frac{1}{\alpha}+(1-\frac{1}{\alpha})\frac{1}{q}~(<1)$. 
Below we use well known estimate
$$
(1+A)^{-\tau}(x,y) \leq c \bigl[|x-y|^{-d+\alpha \tau} \wedge |x-y|^{-d-\alpha}\bigr], \quad x,y \in \mathbb R^d, \quad x \neq y.
$$
We have:
\begin{align*}
(\mu+A_\eta)^{-\tau}h(x) 
&=\eta^{-1}(x) \langle (\mu+A)^{-\tau}(x-y)\eta(y)|h(y)|\rangle_y \\
&\leq c \eta^{-1}(x) \langle |x-y|^{-d-\alpha} \wedge |x-y|^{-d+\alpha\tau} \eta(y)|h(y)|\rangle_y \\
&=c \eta^{-1}(x) \langle |x-y|^{-d+\alpha\tau} \mathbf{1}_{B(x,1)}(y) \eta(y)|h(y)|\rangle_y \\
&+c \eta^{-1}(x) \langle |x-y|^{-d-\alpha} \mathbf{1}_{B^c(x,1)}(y) \eta(y)|h(y)|\rangle_y =:S_1(x)+S_2(x).
\end{align*}
Note that $\|S_1\|_\infty \leq C_1\|h\|_{p,\eta}$. Indeed,
\begin{align*}
S_1(x) \leq c\bigg(\eta^{-1}(x)\sup_{y \in B(x,1)}\eta(y)\bigg)  \langle |x-y|^{-d+\alpha\tau}\mathbf{1}_{B(x,1)}(y) |h(y)|\rangle_y,
\end{align*}
where $\eta^{-1}(x)\sup_{y \in B(x,1)}\eta(y)$ is in $L^\infty$ and, since $p>d-\alpha+1$, we have for every $x \in \mathbb R^d$ by H\"{o}lder's inequality $ \langle |x-y|^{-d+\alpha\tau}\mathbf{1}_{B(x,1)}(y) |h(y)|\rangle_y= \langle |y|^{-d+\alpha\tau}\mathbf{1}_{B(0,1)}(y) |h(x+y)|\rangle_y \leq C_S\|h\|_p$. Since $\|h\|_p \leq \|h\|_{p,\eta}$, we obtain the required.

Next, 
\begin{align*}
S_2(x) & \leq c\eta^{-1}(x) \||x-\cdot|^{-d-\alpha}\mathbf{1}_{B^c(x,1)}(\cdot)\eta^{1-\frac{2}{p}}(\cdot)\|_{p'} \|\eta^{\frac{2}{p}}h\|_{p}, \qquad \|\eta^{\frac{2}{p}}h\|_{p} =\|h\|_{p,\eta} \\
& =:c K(x)\|h\|_{p,\eta} .
\end{align*}
Thus, it remains to show that
$
K(x) =\eta^{-1}(x)\||\cdot|^{-d-\alpha}\mathbf{1}_{B^c(0,1)}(\cdot)\eta^{1-\frac{2}{p}}(x+\cdot) \|_{p'}
$
 is in $L^\infty$.

For $|x|\leq 1$ this is immediate since $d+\alpha - 2\nu\left(1-\frac{2}{p}\right)>\frac{d}{p'}$.  

For $|x|>1$, we estimate
\begin{align*}
K(x)  \leq c_0|x|^{-2\nu}\||\cdot|^{-d-\alpha}\mathbf{1}_{B^c(0,1)}(\cdot)|x+\cdot|^{2\nu(1-\frac{2}{p})}\|_{p'}, \quad c_0>0. 
\end{align*}
Thus, writing $x=er$, $|e|=1$, $r>1$, we have
\begin{align*}
K(er) & \leq C r^{-2\nu} \||\cdot|^{-d-\alpha}\mathbf{1}_{B^c(0,1)}(\cdot)|x+\cdot|^{2\nu(1-\frac{2}{p})}\|_{p'} \\
& = C r^{-2\nu} r^{-d-\alpha + 2\nu(1-\frac{2}{p}) + \frac{d}{p'}}\| |\cdot|^{-d-\alpha}\mathbf{1}_{B^c(0,r^{-1})}(\cdot)|e+\cdot|^{2\nu(1-\frac{2}{p})}\|_{p'},
\end{align*}
where the second multiple
\begin{align*}
\| |\cdot|^{-d-\alpha}\mathbf{1}_{B^c(0,r^{-1})}|e+\cdot|^{2\nu(1-\frac{2}{p})}\|_{p'}  
&\leq c_1 + 
\| |\cdot|^{-d-\alpha}\mathbf{1}_{B(0,1)-B(0,r^{-1})}(\cdot)|e+\cdot|^{2\nu(1-\frac{2}{p})}\|_{p'} \\
& \leq c_1 + c_2 \| |\cdot|^{-d-\alpha}\mathbf{1}_{B(0,1)-B(0,r^{-1})}(\cdot)\|_{p'} \leq c_3 r^{d+\alpha-\frac{d}{p'}},
\end{align*}
and so
$
K(er)\leq C c_3 r^{-\frac{4\nu}{p}}.
$
Thus, $K(er)$ is bounded in $r>1$, and hence $\|S_2\|_\infty \leq C_2\|h\|_{p,\eta}$. 

The proof of Lemma \ref{lem_a} is completed.
\end{proof}

We are in position to complete the proof of Proposition \ref{weight_lem}. 

Lemma \ref{lem_a} applied in the resolvent representation \eqref{repr_eta} yields
$$
\|\eta^{-1} (\mu+\Lambda_p(b))^{-1}\eta f\|_\infty \leq K_1\|f\|_{p,\eta}, \quad f \in C_c,
$$
and thus yields \eqref{j_1_w}.
Now, taking into account that 
$
G_{p,\eta}(r)(\mu+A_\eta)^{(-1+\frac{1}{\alpha})\frac{1}{r'}}|b_m| = T_{p,\eta}|b_m|^{\frac{1}{p}},
$
we obtain \eqref{j_2_w}.
\eqref{j_3_w} follows immediately from \eqref{repr_eta} and Lemma \ref{claim2}.

\section{Corollary of Proposition \ref{lem_weights}}

First, we prove the following elementary consequence of $b \in \mathbf{F}_\delta^{\frac{\alpha-1}{2}}$.

\begin{lemma}
\label{p_eta_lem}
$\|\eta^{-1}|b|^{\frac{1}{p}}\|_{p,\eta}<\infty$, $p>\frac{d}{2\nu}+2.$
\end{lemma}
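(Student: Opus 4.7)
The plan is to unfold the weighted $L^p$ norm so that it becomes a quadratic-form integral of $|b|$ against a smooth test weight, and then apply the definition of $\mathbf{F}_\delta^{\frac{\alpha-1}{2}}$ in its quadratic-form guise.

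First I would compute directly from the definition:
\begin{equation*}
\|\eta^{-1}|b|^{\frac{1}{p}}\|_{p,\eta}^{\,p} = \bigl\langle \eta^{-p}|b|\cdot \eta^2\bigr\rangle = \int_{\mathbb R^d} (1+|x|^2)^{-\nu(p-2)} |b(x)|\,dx.
\end{equation*}
Set $\rho(x):=(1+|x|^2)^{-\nu(p-2)/2}$, so that the integrand equals $|b|\rho^2$ and the above quantity is $\||b|^{\frac12}\rho\|_2^2$. By the defining inequality $\bigl\||b|^{\frac12}(\lambda+A)^{-\frac{\alpha-1}{2\alpha}}\bigr\|_{2\to 2}\leq \sqrt{\delta}$, applying the operator to $(\lambda+A)^{\frac{\alpha-1}{2\alpha}}\rho$ gives
\begin{equation*}
\||b|^{\frac12}\rho\|_2^2 \;\leq\; \delta\,\bigl\|(\lambda+A)^{\frac{\alpha-1}{2\alpha}}\rho\bigr\|_2^2,
\end{equation*}
so the only remaining task is to show that $\rho$ lies in the Bessel space $\mathcal W^{\frac{\alpha-1}{2},2}$.

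Since $\frac{\alpha-1}{2}<\frac{1}{2}<1$, by the Spectral Theorem (or equivalently on the Fourier side, using $(\lambda+|\xi|^\alpha)^{\frac{\alpha-1}{2\alpha}}\leq C_\lambda (1+|\xi|^2)^{1/2}$) one has $\|(\lambda+A)^{\frac{\alpha-1}{2\alpha}}\rho\|_2 \leq C_\lambda \|\rho\|_{H^1}$, so it suffices to verify $\rho,\nabla\rho\in L^2$. Both are explicit: $\rho^2 = (1+|x|^2)^{-\nu(p-2)}$ and $|\nabla\rho|^2 \leq C (1+|x|^2)^{-\nu(p-2)-1}$, which are integrable on $\mathbb R^d$ precisely because $2\nu(p-2)>d$, i.e.\,exactly under the hypothesis $p > \frac{d}{2\nu}+2$.

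There is no real obstacle beyond making the threshold match: the one place the bound $p>\frac{d}{2\nu}+2$ gets used is to ensure $\rho\in L^2(\mathbb R^d)$ (the gradient condition is automatically weaker by one order in the weight). The rest is routine rewriting plus a single application of the form bound, giving $\|\eta^{-1}|b|^{1/p}\|_{p,\eta}^{p}\leq \delta C_\lambda^2 \|\rho\|_{H^1}^2<\infty$, as required.
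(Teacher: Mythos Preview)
Your proof is correct and follows essentially the same route as the paper: rewrite $\|\eta^{-1}|b|^{1/p}\|_{p,\eta}^p=\||b|^{1/2}\rho\|_2^2$ with $\rho=\eta^{-p/2+1}$, apply the weak form-bound, and then verify that $\rho$ lies in the appropriate Bessel space using $p>\frac{d}{2\nu}+2$. The only cosmetic difference is that the paper bounds $(\lambda+A)^{\frac{\alpha-1}{2\alpha}}\rho$ by passing to $(1-\Delta)\rho\in L^2$, whereas you (more economically) use $\rho\in H^1$; both reductions are legitimate and rely on the same integrability threshold.
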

\begin{proof}
We have $\|\eta^{-1}|b|^{\frac{1}{p}}\|_{p,\eta}^{p}=\|\eta^{-\frac{p}{2}+1}|b|^{\frac{1}{2}}\|^2_{2}$ and
\begin{align*}
\|\eta^{-\frac{p}{2}+1}|b|^{\frac{1}{2}}\|_{2} 
& = \||b|^{\frac{1}{2}}(\lambda+A)^{-\frac{\alpha-1}{2\alpha}}(\lambda+A)^{\frac{\alpha-1}{2\alpha}} \eta^{-\frac{p}{2}+1} \|_{2} \\
& \text{(we are using $b \in \mathbf{F}_\delta^{\scriptsize \frac{\alpha-1}{2}}$)} \\
&  \leq \sqrt{\delta} \|(\lambda+A)^{\frac{\alpha-1}{2\alpha}}\eta^{-\frac{p}{2}+1}\|_{2},
\end{align*}
where 
\begin{align*}
\|(\lambda+A)^{\frac{\alpha-1}{2\alpha}}\eta^{-\frac{p}{2}+1}\|_{2} & \leq \|(\lambda+A)^{\frac{\alpha-1}{2\alpha}}(1-\Delta)^{-\frac{\alpha-1}{4}}\|_{2 \rightarrow 2} \|(1-\Delta)^{\frac{\alpha-1}{4}} \eta^{-\frac{p}{2}+1}\|_2 \\
& (\text{by the Spectral Theorem, $\|(\lambda+A)^{\frac{\alpha-1}{2\alpha}}(1-\Delta)^{-\frac{\alpha-1}{4}}\|_{2 \rightarrow 2} \leq C<\infty$}) \\
& \leq C \|(1-\Delta)^{\frac{\alpha-1}{4}} \eta^{-\frac{p}{2}+1}\|_2 \\
& \leq C\|(1-\Delta)^{-1+\frac{\alpha-1}{4}} (1-\Delta)\eta^{-\frac{p}{2}+1}\|_2<\infty
\end{align*}
since $(1-\Delta)\eta^{-\frac{p}{2}+1} \in L^2$  by $p>\frac{d}{2\nu}+2$.
\end{proof}

Put by definition
\begin{equation*}
(e^{-t\Lambda_{C_\infty}(b)}b\cdot g)(x):=\big\langle e^{-t\Lambda_{C_\infty}(b)}(x,\cdot)b(\cdot)\cdot g(\cdot) \big\rangle, \quad g \in L^\infty(\mathbb R^d,\mathbb R^d).
\end{equation*}

The next result is a consequence of Proposition \ref{lem_weights}. Set $\mathbb R_+:=[0,\infty[$.

\begin{corollary}
\label{aux_prop}
In the assumptions of Proposition \ref{weight_lem}, there exist $\nu<\frac{\alpha}{2}$ close to $\frac{\alpha}{2}$ and $p>(d-\alpha +1) \vee (\frac{d}{2\nu}+2)$ 
such that, for every $w \in L^\infty(\mathbb R_+ \times \mathbb R^d,\mathbb R^d)$, we have (write $w=w(s,\cdot)$)

{\rm(\textit{i})} For every $h \in L^\infty(\mathbb R^d)$
\begin{equation*}
\big\|\eta^{-1}\int_0^t e^{-s\Lambda_{C_\infty}(b)}b \cdot h w ds \big\|_\infty \leq e^{\kappa\lambda t}K_2 \big\|\eta^{-1}|b|^{\frac{1}{p}} h \big\|_{p,\eta}\|w\|_{L^\infty(\mathbb R_+  \times \mathbb R^d,\mathbb R^d)}<\infty.
\end{equation*}

\smallskip

{\rm(\textit{ii})} 
$
\int_0^t e^{-s\Lambda_{C_\infty}(b_n)}b_n \cdot w ds \rightarrow \int_0^t e^{-s\Lambda_{C_\infty}(b)}b\cdot w ds$ locally uniformly on $\mathbb R_+ \times \mathbb R^d$.

\smallskip

{\rm(\textit{iii})} $\int_0^t e^{-s\Lambda_{C_\infty}(b)}b\cdot w ds$ is continuous on $\mathbb R_+ \times \mathbb R^d$.

\smallskip

{\rm(\textit{iv})} The Duhamel formula:
$$
e^{-t\Lambda_{C_\infty}(b)}(x,y)=e^{-t(-\Delta)^{\frac{\alpha}{2}}}(x,y) + \int_0^t \bigg\langle e^{-(t-s)\Lambda_{C_\infty}(b)}(x,\cdot)\,b(\cdot) \cdot \nabla_{\cdot} e^{-s(-\Delta)^{\frac{\alpha}{2}}}(\cdot,y)\bigg\rangle ds.
$$

{\rm(\textit{v})} 
$\big\|\eta^{-1}e^{-t\Lambda_{C_\infty}(b)} h \big\|_\infty \leq e^{\kappa\lambda t}K_2 \|\eta^{-1}(1+|b|^{\frac{1}{p}}) h\|_{p,\eta}<\infty$, $h \in L^\infty(\mathbb R^d)$.

\end{corollary}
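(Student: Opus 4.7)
The plan is to derive all five parts from the weighted resolvent estimates $E_1$--$E_3$ of Proposition \ref{weight_lem}, combined with the strong convergence of approximating Feller semigroups (Theorem \ref{ThCinfty}(\textit{i})) and a Duhamel identity.

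For (\textit{i}), I would start from $E_2$ and expand the resolvent as a Laplace integral $(\mu+\Lambda_{C_\infty}(b))^{-1}=\int_0^\infty e^{-\mu s}e^{-s\Lambda_{C_\infty}(b)}ds$. Positivity of the Feller semigroup together with $\eta, |b_m|, h \geq 0$ allows me to restrict the integrand to $s \in [0,t]$ and use $e^{-\mu s} \geq e^{-\mu t}$ with $\mu = \kappa\lambda$ to obtain
$$\int_0^t \eta^{-1}e^{-s\Lambda_{C_\infty}(b)}\eta|b_m|h\,ds \leq e^{\kappa\lambda t}K_2\||b_m|^{1/p}h\|_{p,\eta}.$$
The pointwise bound $|b_m \cdot hw| \leq |b_m||h|\|w\|_\infty$ together with positivity of the semigroup reduces the integrand of (\textit{i}) to this form, and passing $m\to\infty$ via Fatou (using $|b_m| \to |b|$ a.e.\ from the mollification) yields (\textit{i}).

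For (\textit{ii}), I would split
$$\int_0^t [e^{-s\Lambda_{C_\infty}(b_n)}b_n - e^{-s\Lambda_{C_\infty}(b)}b]\cdot w\,ds = \int_0^t e^{-s\Lambda_{C_\infty}(b_n)}(b_n{-}b)\cdot w\,ds + \int_0^t [e^{-s\Lambda_{C_\infty}(b_n)}{-}e^{-s\Lambda_{C_\infty}(b)}]b\cdot w\,ds.$$
The first piece is controlled by (\textit{i}) applied uniformly in $n$ (the $b_n$ are weakly form-bounded with the same $\delta$) together with $\|\eta^{-1}|b_n-b|^{1/p}\|_{p,\eta}\to 0$, obtained from Lemma \ref{p_eta_lem} and the $L^1_{\loc}$ convergence $b_n\to b$. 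The second piece is handled by Theorem \ref{ThCinfty}(\textit{i}) after approximating $b\cdot w$ by $C_\infty$-functions, with the tail-at-infinity controlled via (\textit{i}). Assertion (\textit{iii}) then follows because, for smooth bounded $b_n$, the integrand $e^{-s\Lambda_{C_\infty}(b_n)}(b_n \cdot w)$ is jointly continuous in $(s,x)$, hence so is its time integral, and locally uniform limits of continuous functions are continuous.

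For (\textit{iv}), the Duhamel identity is derived first for smooth $b_n$ by differentiating $s\mapsto e^{-(t-s)\Lambda_{C_\infty}(b_n)}e^{-sA}$ and integrating; the kernel-level version follows from the integral-operator representation of $e^{-t\Lambda_{C_\infty}(b_n)}$, and passing $n\to\infty$ uses (\textit{ii}) on the integral term and Theorem \ref{ThCinfty}(\textit{i}) on the free term. For (\textit{v}), applying (\textit{iv}) to $h$ gives (after the change $s\mapsto t-s$)
$$\eta^{-1}e^{-t\Lambda_{C_\infty}(b)}h = \eta^{-1}e^{-tA}h + \eta^{-1}\int_0^t e^{-s\Lambda_{C_\infty}(b)}\bigl(b\cdot\nabla e^{-(t-s)A}h\bigr)\,ds.$$
The first term supplies the ``$1$'' contribution in $(1+|b|^{1/p})$ via the standard kernel bound $e^{-tA}(x,y)\leq Ct/(t^{1/\alpha}+|x-y|)^{d+\alpha}$ and H\"older's inequality, admissible because $p>d/(2\nu)+2$. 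The main obstacle is the integral term: a naive application of (\textit{i}) with $w(s,\cdot)=\nabla e^{-(t-s)A}h$ fails because $\|w(s,\cdot)\|_\infty \leq C(t-s)^{-1/\alpha}$ is not uniformly bounded on $[0,t]$. I would circumvent this by invoking the pointwise gradient estimate \eqref{A} at the kernel level on $b(\cdot)\cdot\nabla_\cdot e^{-(t-s)A}(\cdot,y)$ before performing the $s$-integration, which reassembles the time integral of $e^{-s\Lambda_{C_\infty}(b)}$ acting on $|b|\,(\kappa^{-1}\mu+A)^{-(\alpha-1)/\alpha}|h|$ into a resolvent to which $E_2$ (via its Laplace-integral form as used in (\textit{i})) applies directly, producing the $\|\eta^{-1}|b|^{1/p}h\|_{p,\eta}$ contribution.
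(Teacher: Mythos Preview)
Your treatment of (\textit{i}), (\textit{iii}) and (\textit{iv}) matches the paper's proof essentially line for line. The differences worth flagging are in (\textit{ii}) and (\textit{v}).

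\medskip

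\textbf{Part (\textit{ii}).} Your decomposition is the mirror image of the paper's: you write
\[
\int_0^t e^{-s\Lambda(b_n)}(b_n-b)\cdot w\,ds \;+\; \int_0^t\bigl[e^{-s\Lambda(b_n)}-e^{-s\Lambda(b)}\bigr]b\cdot w\,ds,
\]
whereas the paper first restricts to compactly supported $w$ and splits as
\[
\int_0^t e^{-s\Lambda(b)}(b-b_n)\cdot w\,ds \;+\; \int_0^t\bigl[e^{-s\Lambda(b)}-e^{-s\Lambda(b_n)}\bigr]b_n\cdot w\,ds.
\]
The paper's choice puts the \emph{smooth bounded} drift $b_n$ next to the semigroup difference, so that $b_n\cdot w(s,\cdot)\in L^\infty_{\rm com}$; one then inserts an auxiliary $b_m$, applies Lusin's theorem to write $b_m\cdot w(s,\cdot)=\phi+\psi$ with $\phi\in C_c$ and $\|\psi\|_{L^r}$ small, uses $C_\infty$-convergence on $\phi$ and the $L^r\to L^\infty$ smoothing of Theorem \ref{thmLp}(\textit{iv}) on $\psi$. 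The extension to general $w$ is then a separate step via the weighted tail estimate (\textit{i}). Your second term has the semigroup difference acting on the genuinely singular $b\cdot w$, so ``approximating $b\cdot w$ by $C_\infty$-functions'' forces you back through the same $b\leadsto b_m$ and Lusin machinery. Both routes work, but the paper's order of operations (compact $w$ first, smooth drift with the difference term) is cleaner and avoids having to check the global weighted convergence $\int |b_n-b|\,\eta^{-p+2}\to 0$.

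\medskip

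\textbf{Part (\textit{v}).} You have correctly spotted that a naive application of (\textit{i}) with $w(s,\cdot)=\nabla e^{-(t-s)A}h$ fails for general $h\in L^\infty$ since $\|w(s,\cdot)\|_\infty\sim (t-s)^{-1/\alpha}$. Your proposed fix, however, does not work as written: estimate \eqref{A} bounds the gradient of the \emph{resolvent} kernel $(\mu+A)^{-1}(x,y)$, not of the \emph{semigroup} kernel $e^{-\tau A}(x,y)$, so it cannot be invoked on $\nabla_\cdot e^{-(t-s)A}(\cdot,y)$ ``before the $s$-integration''. The available semigroup estimate is $|\nabla e^{-\tau A}(z,y)|\le C\tau^{-1/\alpha}e^{-\tau A}(z,y)$ (from \eqref{F1}--\eqref{F2}), which reintroduces exactly the singularity you are trying to avoid, and there is no way to ``reassemble'' the mixed integral $\int_0^t e^{-s\Lambda(b)}[\,\cdot\,]\,e^{-(t-s)A}\,ds$ into a single resolvent to which $E_2$ would apply. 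The paper's own proof here is admittedly terse (``apply (\textit{iv}), (\textit{i}), Lemma \ref{p_eta_lem}''); note, though, that (\textit{v}) is used only with $h=\xi_r(1-\xi_k)\in C_c^\infty$, for which $\nabla e^{-sA}h=e^{-sA}\nabla h$ \emph{is} uniformly bounded and (\textit{i}) applies directly --- the resulting bound then involves $\|\nabla h\|_\infty$ rather than $\|\eta^{-1}|b|^{1/p}h\|_{p,\eta}$, which is harmless for the conservativeness argument.
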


\begin{proof}
(\textit{i}) Using Proposition \ref{weight_lem}($E_2$), we estimate
\begin{align*}
&\big\|\eta^{-1}\int_0^t e^{-s\Lambda_{C_\infty}(b_n)}b_n \cdot h w ds \big\|_{L^\infty(\mathbb R^d)} \\
&\leq e^{\mu t}\big\|\eta^{-1}\big(\mu+\Lambda_{C_\infty}(b_n)\big)^{-1}|b_n|h\big\|_{L^\infty(\mathbb R^d)} \|w\|_{L^\infty(\mathbb R_+  \times \mathbb R^d,\mathbb R^d)}  \\
&\leq e^{\mu t}K_2 \|\eta^{-1}|b|^{\frac{1}{p}}h \|_{p,\eta} \|w\|_{L^\infty(\mathbb R_+ \times \mathbb R^d,\mathbb R^d)}<\infty.
\end{align*} 
Now Fatou's Lemma yields (\textit{i}).

\smallskip

(\textit{ii}) First, let us prove that, for every $w \in L_{\rm com}^\infty(\mathbb R_+  \times \mathbb R^d,\mathbb R^d)$ (the vector fields with entries in $L^\infty$ having compact support),
\begin{equation}
\label{conv_f}
\tag{$\ast$}
\left\|\int_0^t e^{-s\Lambda_{C_\infty}(b)}b \cdot w ds - \int_0^t e^{-s\Lambda_{C_\infty}(b_n)}b_n\cdot w ds \right\|_{L^\infty(\mathbb R^d)} \rightarrow 0
\end{equation}
locally uniformly in $t \in \mathbb R_+$.

Step 1. 
\begin{equation}
\label{ast1}
\tag{$\ast\ast$}
J:=\left\|\int_0^t  e^{-s\Lambda_{C_\infty}(b)}(b-b_n) \cdot w ds \right\|_{L^\infty(\mathbb R^d)} \rightarrow 0
\end{equation}
locally uniformly in $t \in \mathbb R_+$. 

Indeed, fix some $g \in L^\infty_{\rm com}(\mathbb R^d)$ such that $g \geq |w|$ a.e.\,on $\mathbb R_+  \times \mathbb R^d$. Then (recall $\|\cdot\|_p=\|\cdot\|_{L^p(\mathbb R^d)}$)
\begin{align*}
J 
&\leq e^{\mu t}\|(\mu+\Lambda_{p}(b))^{-1}|b-b_n|g\|_{L^\infty(\mathbb R^d)} \\
& \leq
 e^{\mu t} C_1 \||b-b_n|^{\frac{1}{p}}g\|_p \rightarrow 0 \text{ as } n \rightarrow \infty \quad \text{(by $|b| \in L^1_{\loc}(\mathbb R^d)$)}.
\end{align*}
(Here $(\mu+\Lambda_{p}(b))^{-1}|b-b_n|(x):=\langle (\mu+\Lambda_{p}(b))^{-1}(x,y)|b(y)-b_n(y)| \rangle_y=\lim_k\langle (\mu+\Lambda_{p}(b))^{-1}(x,y)|b_k(y)-b_n(y)| \rangle_y$.)

Step 2. 
\begin{equation}
\label{ast2}
\tag{$\ast\ast\ast$}
\left\|\int_0^t e^{-s\Lambda_{C_\infty}(b)}b_n \cdot w ds - \int_0^t e^{-s\Lambda_{C_\infty}(b_n)}b_n \cdot w ds\right\|_{L^\infty(\mathbb R^d)} \rightarrow 0
\end{equation}
locally uniformly in $t \in \mathbb R_+$.

Indeed, write 
\begin{align*}
&\int_0^t e^{-s\Lambda_{C_\infty}(b)}b_n \cdot w ds - \int_0^t e^{-s\Lambda_{C_\infty}(b_n)}b_n \cdot w ds \\
& =\int_{0}^{t}(e^{-s\Lambda_{C_{\infty}}(b)}-e^{-s\Lambda_{C_{\infty}}(b_{n})})(b_{n}-b_{m}) \cdot w ds+ \int_{0}^{t}(e^{-s\Lambda_{C_{\infty}}(b)}-e^{-s\Lambda_{C_{\infty}}(b_{n})})b_{m} \cdot w ds \\ &=:R_{1}+R_{2}.
\end{align*}
where $m$ is to be chosen.
Arguing as above we obtain, for every $x \in \mathbb R^d$,
\begin{align*}
R_{1}(x) & \leq e^{\mu t}(\mu + \Lambda_{C_{\infty}}(b))^{-1}|b_{n}-b_{m}| g(x) +e^{\mu t}(\mu + \Lambda_{C_{\infty}}(b_{n}))^{-1}|b_{n}-b_{m}| g(x)  \\
&\leq C e^{\mu t}\|(b_n-b_m)^{\frac{1}{p}}g\|_p \rightarrow 0 \text{ as }\quad n,m \rightarrow \infty.
\end{align*}
To estimate $R_2(x)$, fix $m$ sufficiently large. Let $s \in ]0,t]$. For every $\varepsilon>0$, using Lusin's Theorem (recall that $b_m$ has compact support), we can write $b_m \cdot w(s)=b_m \cdot w'+b_m \cdot w''$, where $w' \in C_c(\mathbb R^d)$, $w'' \in L^\infty_{\rm com}(\mathbb R^d)$ with $\|w'\|_{L^\infty(\mathbb R^d)} \leq \|w(s)\|_{L^\infty(\mathbb R^d)}$, $\|w''\|_{L^r(\mathbb R^d)}<\varepsilon$, $r \geq 2$. By Theorem \ref{thmLp}(\textit{i}), for each $s>0$,
$$
\|(e^{-s\Lambda_{C_{\infty}}(b)}-e^{-s\Lambda_{C_{\infty}}(b_{n})})b_{m} \cdot w'(s)\|_{L^\infty(\mathbb R^d)} \rightarrow 0 \quad \text{ as } n \rightarrow \infty,
$$
and by Theorem \ref{thmLp}(\textit{iv}), for all $n \geq 1$,
$$ 
\|e^{-s\Lambda_{C_{\infty}}(b)}b_m \cdot w''\|_{L^\infty(\mathbb R^d)}, \|e^{-s\Lambda_{C_{\infty}}(b_n)}b_m \cdot w''\|_{L^\infty(\mathbb R^d)} \leq \varepsilon c_r e^{\omega s}s^{-\frac{d}{\alpha r}}\|b_m\|_{L^\infty(\mathbb R^d)}.
$$
Therefore, for every $s \in ]0,t]$, 
$$
\|(e^{-s\Lambda_{C_{\infty}}(b)}-e^{-s\Lambda_{C_{\infty}}(b_{n})})b_{m} \cdot w(s)\|_{L^\infty(\mathbb R^d)} \rightarrow 0 \quad \text{ as } n \rightarrow \infty.
$$
Finally, appealing to the Dominated Convergence Theorem, 
we obtain that $\|R_2\|_{L^\infty(\mathbb R^d)} \rightarrow 0$ locally uniformly in $t \in \mathbb R_+$.
We have proved \eqref{ast2}.

Now, \eqref{ast1}, \eqref{ast2} yield \eqref{conv_f}.

\smallskip

Armed with \eqref{conv_f}, we now complete the proof (\textit{ii}). Let $w \in L^\infty(\mathbb R_+  \times \mathbb R^d,\mathbb R^d)$. Write  $$w=w_1+w_2, \quad w_1:=\mathbf{1}_{B(0,R)}w, \quad w_2:=\mathbf{1}_{B^c(0,R)} w, \quad R>0.$$ By (\textit{i}),
\begin{align*}
\sup_n\big\|\eta^{-1}\int_0^t e^{-s\Lambda_{C_\infty}(b_n)}b_n \cdot w_2 ds \big\|_{L^\infty(\mathbb R^d)}, \;& \big\|\eta^{-1}\int_0^t e^{-s\Lambda_{C_\infty}(b)}b \cdot w_2 ds \big\|_{L^\infty(\mathbb R^d)} \\
&\leq e^{\mu t}K_2 \|\eta^{-1}|b|^{\frac{1}{p}}\mathbf{1}_{B^c(0,R)}\|_{p,\eta}\|w\|_{L^\infty(\mathbb R_+ \times \mathbb R^d,\mathbb R^d)},
\end{align*}
where, in view of Lemma \ref{p_eta_lem}, the RHS can be made arbitrarily small by selecting $R$ sufficiently large. In turn, by \eqref{conv_f}
$$
\left\|\int_0^t e^{-s\Lambda_{C_\infty}(b)}b \cdot w_1 ds - \int_0^t e^{-s\Lambda_{C_\infty}(b_n)}b_n \cdot w_1 ds\right\|_{L^\infty(\mathbb R^d)} \rightarrow 0
$$
locally uniformly in $t \in \mathbb R_+$. The yields (\textit{ii}).

\smallskip

(\textit{iii}) It suffices to prove that $\int_0^t e^{-s\Lambda_{C_\infty}(b_n)}b_n\cdot w ds$ is continuous on $\mathbb R_+  \times \mathbb R^d$ and then apply (\textit{ii}). 

To prove the former, we note that for every $s>0$, $b_n(\cdot) \cdot w(s,\cdot)$ is bounded and has compact support. Thus, by Theorem \ref{thmLp}, $e^{-s\Lambda_{C_\infty}(b_n)}b_n\cdot  w(s) \in C_\infty$, so $\int_0^t e^{-s\Lambda_{C_\infty}(b_n)}b_n \cdot w ds$ is continuous on $\mathbb R_+  \times \mathbb R^d$.

\smallskip

(\textit{iv}) It suffices to prove the Duhamel formula on test functions $\varphi \in C_c^\infty$. By the Duhamel formula for $\Lambda_{C_\infty}(b_n)$
we have for every $f \in C_c^\infty$
$$
\langle e^{-t\Lambda_{C_\infty}(b_n)}f, \varphi \rangle =\langle e^{-t(-\Delta)^{\frac{\alpha}{2}}}f,\varphi \rangle  + \left\langle \int_0^t  e^{-(t-s)\Lambda_{C_\infty}(b_n)}\,b_n \cdot \nabla e^{-s(-\Delta)^{\frac{\alpha}{2}}}f  ds, \varphi \right\rangle.
$$
It remains to apply Theorem \ref{ThCinfty}(\textit{i}) and assertion (\textit{ii}) proved above with $w=\nabla e^{-s(-\Delta)^{\frac{\alpha}{2}}}f$.

\smallskip

(\textit{v}) is obtained by applying consecutively (\textit{iv}), (\textit{i}) and Lemma \ref{p_eta_lem}.
\end{proof}

\section{Proof of Theorem \ref{ThCinfty}(\textit{v}), (\textit{vi})}

Let $\mathbb P^n_x$  be the probability measures associated with $e^{-t\Lambda_{C_\infty}(b_n)}$, $n \geq 1$.
Set $\mathbb E_x:=\mathbb E_{\mathbb P_x}$, and $\mathbb E^n_x:=\mathbb E_{\mathbb P_x^n}$.

First, we note that for every $x \in \mathbb{R}^{d}$ and $t > 0$, $b_{n}(X_t) \rightarrow b(X_t)$, $\mathbb{P}_{x}$ a.s. as $n \uparrow \infty$. 
Indeed, by (\ref{Cinfty_id})  and the Dominated Convergence Theorem, for any $\mathcal{L}^{d}$ measure zero set $G \subset \mathbb{R}^{d}$ and every $t > 0$ , $\mathbb{P}_{x}[X_t \in  G] = 0$. Since $b_{n} \rightarrow b$ a.e.\,we have the required.

Fix $x \in \mathbb R^d$.

(\textit{v}) To show that the semigroup $e^{-t\Lambda_{C_\infty}(b)}$ is conservative, it suffices to show that 
\begin{equation}
\label{conv_7}
\mathbb E_x[\xi_k(X_t)] \rightarrow 1 \quad \text{ as }k \uparrow \infty, 
\end{equation}
where
\begin{equation}
\label{xi_k}
\xi_k(y):=\left\{
\begin{array}{ll}
\upsilon (|y|+1-k) & |y| \geq k, \\
1 & |y|<k,
\end{array}
\right.
\end{equation}
for a fixed $\upsilon \in C^\infty([0,\infty[)$, $\upsilon (s)=1$ if $0 \leq s \leq 1$, $\upsilon (s)=0$ if $s \geq 2$. 

By Theorem \ref{ThCinfty}(\textit{i}),
$\mathbb E_x[\xi_k(X_t)]=\lim_{n} \mathbb E^n_x[\xi_k(X_t)]$ uniformly on every compact interval of $t \geq 0$, so convergence \eqref{conv_7} would follow from
\begin{equation}
\label{conv_n9}
\mathbb E^n_x[\xi_k(X_t)] \rightarrow 1 \quad \text{ as $k \uparrow \infty$ \textit{uniformly in} $n$}.
\end{equation}
In turn,
since $\mathbb E^n_x[\mathbf{1}_{\mathbb R^d}(X_t)]=1$ for all $n=1,2,\dots$, \eqref{conv_n9} is equivalent to
$\mathbb E^n_x[(\mathbf{1}_{\mathbb R^d}-\xi_k)(X_t)]  \rightarrow 0$ as $k \uparrow \infty$ uniformly in $n$. 
We have by the Dominated Convergence Theorem
\begin{align*}
 \mathbb E^n_x[(\mathbf{1}_{\mathbb R^d}-\xi_k)(X_t)] 
& = \lim_{r \uparrow \infty} \mathbb E^n_x[\xi_r(1-\xi_k)(X_t)] \\
& \text{(we are using Corollary \ref{aux_prop}(\textit{v}))} \\
& \leq \eta(x) K_2 e^{\kappa\lambda t} \lim_{r \uparrow \infty} \|\eta^{-1}\xi_r (1-\xi_k)\|_{p,\eta} \\
&\leq \eta(x) K_2 e^{\kappa\lambda t} \|\eta^{-1} (1-\xi_k) \|_{p,\eta} \rightarrow 0 \quad \text{as $k \rightarrow \infty$},
\end{align*}
where at the last step we have used $\|\eta^{-1}\|_{p,\eta}<\infty$ since $p>\frac{d}{2\nu}+2$.

\medskip

(\textit{vi}) 
By Fatou's Lemma,
\begin{align*}
\mathbb{E}_{x}\int_{0}^{t}|b(X_s)|ds & \leq \liminf_{n}\mathbb{E}_{x} \int_{0}^{t} |b_{n}(X_s)|ds =
\liminf_{n} \int_{0}^{t}e^{-s\Lambda_{C_{\infty}}(b)}|b_{n}|(x)ds \\
& (\text{we argue as in the proof of Corollary \ref{aux_prop}(\textit{ii})}) \\
& \leq  K_{2} e^{\kappa\lambda t} \eta(x) \|\eta^{-1}|b|^{\frac{1}{p}}\|_{p,\eta}<\infty \quad (\text{Lemma \ref{p_eta_lem}}).
\end{align*}

\section{Proof of Theorem \ref{ThCinfty}(\textit{vii})}

\label{sde_sect}

We follow the approach of \cite{PP,P} (see also \cite{CW}) but in appropriate weighted space. Set
$$
Z_t:=X_t-X_0-\int_0^t b(X_s)ds, \quad t \geq 0.
$$
Our goal is to prove that under $\mathbb P_x$ the process $Z_t$ is a symmetric $\alpha$-stable process starting at $0$. We use notation introduced in the beginning of the previous section. 
For brevity, write $e^{-t\Lambda(b)} = e^{-t\Lambda_{C_\infty}(b)}$.

\smallskip

1. \textit{Define
\begin{equation}
\label{w}
w(t,x,\varkappa)=\mathbb E_x \biggl[e^{i \varkappa \cdot \bigl (X_t-\int_0^t b(X_s)ds\bigr) } \biggr], \quad t \geq 0, \quad \varkappa \in \mathbb R^d.
\end{equation} 
Then $w$ is a bounded solution to integral equation}
\begin{equation}
\label{eq_w}
w(t,x,\varkappa)=\int_{\mathbb R^d} e^{i \varkappa \cdot y}e^{-t\Lambda(b)}(x,y) dy - i\int_0^t \int_{\mathbb R^d} e^{-(t-s)\Lambda(b)}(x,z)(\varkappa \cdot b(z))w(s,z,\varkappa)dzds.
\end{equation}
Indeed, in view of
\begin{equation*}
e^{-i\cdot\varkappa\int_0^t b(X_\tau)d\tau}=1-i\int_{0}^{t}(\varkappa \cdot b(X_s))e^{-i\cdot\varkappa\int_s^t b(X_\tau)d\tau},
\end{equation*}
one has
\begin{align*}
w(t,x,\varkappa)&
=\mathbb E_x \biggl[e^{i \varkappa \cdot X_t}\biggr]-i\int_{0}^{t}\mathbb E_x \biggl[e^{i \varkappa \cdot X_t}(\varkappa \cdot b(X_s))e^{-i\cdot\varkappa\int_s^t b(X_\tau)d\tau}\biggr]ds\\
&=\mathbb E_x \biggl[e^{i \varkappa \cdot X_t}\biggr]-i\int_{0}^{t}\mathbb E_x\biggl[(\varkappa \cdot b(X_s))w(t-s,X_s,\varkappa)\biggr]ds\\
&=\int_{\mathbb R^d} e^{i \varkappa \cdot y}e^{-t\Lambda(b)}(x,y) dy - i\int_0^t \int_{\mathbb R^d} e^{-s\Lambda(b)}(x,z)(\varkappa \cdot b(z))w(t-s,z,\varkappa)dzds.
\end{align*}

\smallskip

2.\,\textit{Set $\tilde{w}(t,x,\varkappa):=e^{i \varkappa \cdot x - t|\varkappa|^\alpha}$. This is another bounded solution to \eqref{eq_w}.}

Indeed, multiplying  the Duhamel formula
$$
e^{-t\Lambda}(x,y)=e^{-t(-\Delta)^{\frac{\alpha}{2}}}(x,y) + \int_0^t \langle e^{-(t-s)\Lambda}(x,\cdot) b(\cdot)\cdot \nabla_\cdot e^{-s(-\Delta)^{\frac{\alpha}{2}}}(\cdot,y)\rangle ds
$$
 (Corollary \ref{aux_prop}(\textit{iv}))
by $e^{i\varkappa \cdot y}$ and then integrating in $y$, we obtain the required.

\smallskip

Next, let us show that a bounded solution to \eqref{eq_w} is unique. We will need

\smallskip

\smallskip

3.\,\textit{For every $\varkappa \in \mathbb R^d$ there exists $T=T(\varkappa)>0$ such that the mapping
$$
(Hv)(t,x):=-i\int_0^t \int_{\mathbb R^d} e^{-(t-s)\Lambda(b)}(x,z)(\varkappa \cdot b(z))v(s,z)dsdz, \quad (t,x) \in [0,T] \times \mathbb R^d,
$$
is a contraction on $L^p(\mathbb R^d, |b|\eta^{-p+2}d\mathcal L^d; L^\infty[0,T])$ (i.e.\,functions taking values in $L^\infty[0,T]$) for $p$ as in Proposition \ref{weight_lem}.}

Indeed, we have
\begin{align*}
|Hv(t,x)| & \leq \left| \int_0^t \langle e^{-(t-s)\Lambda(b)}(x,\cdot)(\varkappa \cdot b(\cdot))v(s,\cdot)\rangle ds \right|  \\
& \leq |\varkappa|\int_0^t \langle e^{-(t-s)\Lambda(b)}(x,\cdot)|b(\cdot)|^{\frac{1}{p'}}|b(\cdot)|^{\frac{1}{p}}|v(s,\cdot)|\rangle ds \\
& \leq |\varkappa|\int_0^t \langle e^{-(t-s)\Lambda(b)}(x,\cdot)|b(\cdot)|^{\frac{1}{p'}}|b(\cdot)|^{\frac{1}{p}}\sup_{\tau \in [0,T]}|v(\tau,\cdot)|\rangle ds \label{H_est} \tag{$\ast$}
\end{align*}
Let us note that, for every $x \in \mathbb R^d$,
\begin{align*}
&|b(x)|^{\frac{1}{p}}\eta^{-1}(x)\sup_{t \in [0,T]}\int_0^t \langle e^{-(t-s)\Lambda(b)}(x,\cdot)|b(\cdot)|^{\frac{1}{p'}}\eta(\cdot)\rangle ds \\
& (\text{we are applying the Dominated Convergence Theorem})\\
& |b(x)|^{\frac{1}{p}}\eta^{-1}(x)\sup_{t \in [0,T]}\lim_m\int_0^t \langle e^{-(t-s)\Lambda(b)}(x,\cdot)|b_m(\cdot)|^{\frac{1}{p'}}\eta(\cdot)\rangle ds,
\end{align*}
where, in turn, the last term
\begin{align*}
& |b|^{\frac{1}{p}}\eta^{-1}\sup_{t \in [0,T]}\lim_m\int_0^t e^{-(t-s)\Lambda(b)}|b_m|^{\frac{1}{p'}}\eta ds \\
& \leq |b|^{\frac{1}{p}}\eta^{-1}e^{\mu T}\lim_m(\mu+\Lambda_{C_\infty}(b))^{-1}|b_m|^{\frac{1}{p'}}\eta \in \mathcal B(L^p_{\eta}) \quad \text{by Proposition \ref{weight_lem}\eqref{j_3_w}}.
\end{align*}
Also by Proposition \ref{weight_lem}\eqref{j_3_w},
selecting $\mu$ sufficiently large, and then selecting $T$ sufficiently small, the $L^p_\eta \rightarrow L^p_\eta$ norm of the last operator can be made arbitrarily small. Applying this in \eqref{H_est}, we obtain that 
$H$ is indeed a contraction on $L^p(\mathbb R^d, |b|\eta^{-p+2}d\mathcal L^d; L^\infty[0,T])$.

\medskip

We have $L^\infty([0,T] \times \mathbb R^d) \subset L^p(\mathbb R^d, |b|\eta^{-p+2}d\mathcal L^d; L^\infty[0,T])$ since $|b|\eta^{-p+2} \in L^1(\mathbb R^d)$ (Lemma \ref{p_eta_lem}). Combining the assertions of Steps 1-3, we obtain that for every $\varkappa \in \mathbb R^d$
$$
w(t,x,\varkappa)=\tilde{w}(t,x,\varkappa) \quad \text{ in } L^p(\mathbb R^d, |b|\eta^{-p+2}d\mathcal L^d; L^\infty[0,T]),
$$
and thus
$$
w(t,x,\varkappa)=\tilde{w}(t,x,\varkappa) \quad \text{ for a.e.\,$x \in \mathbb R^d$}
$$
(although $t<T(\varkappa)$, one can get rid of this constraint using the reproduction property of $e^{-t\Lambda(b)}$, so without loss of generality $T \neq T(\varkappa)$). Now, applying Corollary \ref{aux_prop}(\textit{iii}) to the RHS of \eqref{eq_w}, we obtain that for every $\varkappa \in \mathbb R^d$ $w(t,x,\varkappa)$ is continuous in $t$ and $x$, and so $w=\tilde{w}$ everywhere. Thus, for all $t \leq T$, $x \in \mathbb R^d$
$$
\mathbb E_x \biggl[e^{i \varkappa \cdot \bigl (X_t-X_0-\int_0^t b(X_s)ds\bigr) } \biggr] = e^{-\varkappa \cdot x}w(t,x,\varkappa)=e^{-t|\varkappa|^\alpha}.
$$
By a standard result, $Z_t$ is a symmetric $\alpha$-stable process.
The proof of Theorem \ref{ThCinfty}(\textit{vii}) is completed.

\appendix

\section{Pointwise bound on $\nabla \big(\mu+(-\Delta)^{\frac{\alpha}{2}}\big)^{-\gamma}$}

\label{app_est}

Set $A := (-\Delta)^{\frac{\alpha}{2}}$. 

\medskip


\noindent\textit{There exists constant $c_{d,\alpha,\gamma}  > 0$ such that}
\begin{equation}
\tag{A.1}
\label{estimation2}
\big|\nabla_y \big(\mu+A\big)^{-\gamma}(x,y)\big| \leq c_{d,\alpha,\gamma}(\mu +A)^{-\gamma+\frac{1}{\alpha}}(x,y), \quad \frac{1}{\alpha}<\gamma \leq 1, \quad \mu>0.
\end{equation}

\begin{proof}
We will need the well known estimates (see e.g.\,\cite{BJ}):
\begin{equation}
\label{F1}
\tag{A.2}
e^{-tA}(x,y) \geq C_{d,\alpha}  \biggl(t^{-\frac{d}{\alpha}} \wedge \frac{t}{|x-y|^{d+\alpha}}\biggr),
\end{equation}
\begin{equation}
\label{F2}
\tag{A.3}
 | \nabla_y e^{-tA}(x,y)|  \leq K_{d,\alpha} t^{-\frac{1}{\alpha}}\left(t^{-\frac{d}{\alpha}} \wedge \frac{t}{|x-y|^{d+\alpha}}\right).
\end{equation}
By the formula
\begin{equation*}
 (\mu+A)^{-\gamma}=\frac{1}{\Gamma (\gamma)}\int_{0}^{\infty}e^{-\mu t}t^{\gamma-1}e^{-tA}dt, 
\end{equation*}
we have
\begin{align*}
 \big|\nabla_y \big(\mu+A\big)^{-\gamma}(x,y)\big| &\leq  \frac{1}{\Gamma (\gamma)}  \int_{0}^{\infty}e^{-\mu t}t^{\gamma -1}| \nabla e^{-tA}(x,y) |dt \\
& (\text{we are using \eqref{F2} and then \eqref{F1}}) \\
 & \leq   K_{d,\alpha}C_{d,\alpha}\frac{1}{\Gamma (\gamma)} \int_{0}^{\infty}e^{-\mu t}t^{\gamma - \frac{1}{\alpha} -1} e^{-tA}(x,y) dt   \\
  & =  K_{d,\alpha}C_{d,\alpha} \frac{\Gamma(\gamma -\frac{1}{\alpha})}{\Gamma (\gamma)}\big(\mu+A\big)^{-\gamma+\frac{1}{\alpha}}(x,y).
\end{align*}
\end{proof}

\section{$L^p$ bounds for symmetric Markov generators}

\label{lp_sect}

Let $X$ be a set and $m$ a measure on $X$. In this section we use notation
$$
\langle h\rangle := \int_{X} hdm, \quad \langle h,g\rangle := \langle h\bar{g}\rangle.
$$

Let $-A$ be a symmetric Markov generator on $L^2 = L^2(X,m)$, i.e.\,a self-adjoint operator $A \geq 0$ that generates a contraction $C_0$ semigroup such that $e^{-tA}L^2_+ \subset L^2_+$ and for all $t>0$
$$
(f \in L^2, |f| \leq 1) \quad \Rightarrow \quad |e^{-tA}f| \leq 1.
$$
The semigroup $e^{-tA}$ determines a consistent family of contraction $C_0$ semigroups on $L^p$,
\begin{equation*}
e^{-tA_p} :=\left[ e^{-tA}\upharpoonright L^2 \cap L^p\right]_{p \rightarrow p}^{\rm clos} \in \mathcal{B}(L^{p}), \quad p \in [1,\infty[.
\end{equation*}


The proof of the next lemma is based on inequalities for symmetric Markov generators of \cite[Theorem 2.1]{LS} (see also \cite{BS}):


\begin{lemma}
\label{abc_lem}
Let $m$ be $\sigma$-finite and $1<\alpha \leq 2$. 
If $0 \leq V \in L^1_{\loc}$ satisfies
\begin{equation*}
\|V^{\frac{1}{2}}(\lambda+A)^{-\frac{\alpha-1}{2\alpha}}\|_{2 \rightarrow 2} \leq \sqrt{\delta} \quad \text{ for some } \lambda=\lambda_\delta,
\end{equation*}
then, for every $p \in ]1,\infty[$ and   $\mu \geq \lambda$, 

\smallskip

{\rm($\mathbf{a}$)}  $\|V^{\frac{1}{p}}(\mu +A_p)^{-\frac{\alpha-1}{\alpha}}\|_{p \rightarrow p}  \leq (\delta c_{p})^{\frac{1}{p}}\mu^{-\frac{\alpha-1}{\alpha p'}}, \quad c_p:=\frac{4}{pp'}$,

\smallskip

{\rm($\mathbf{b}$)} $\|V^{\frac{1}{p}}(\mu +A_p)^{-\frac{\alpha-1}{\alpha}}V^{\frac{1}{p'}}f\|_{p}  \leq \delta c_{p}\|f\|_p, \quad f \in \mathcal E:=\bigcup_{\varepsilon>0}e^{-\varepsilon |V|}L^p,$

\smallskip

{\rm($\mathbf{c}$)} $\|(\mu +A_p )^{-\frac{\alpha-1}{\alpha}}V^{\frac{1}{p'}}f\|_{p} \leq (\delta c_{p})^{\frac{1}{p'}}\mu^{-\frac{\alpha-1}{\alpha p}}\|f\|_p, \quad f \in \mathcal E.$

\end{lemma}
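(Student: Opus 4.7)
The argument extrapolates from the $L^2$ form bound (the hypothesis) to the $L^p$ estimates (a)--(c) via the $L^p$ inequalities for symmetric Markov generators of \cite{BS,LS}. Set $\sigma:=(\alpha-1)/\alpha$. First, rewrite the hypothesis as the quadratic form inequality
$$
\langle V h, h\rangle \leq \delta \langle (\lambda+A)^{\sigma} h, h\rangle, \quad h \in D((\lambda+A)^{\sigma/2}),
$$
which, by monotonicity of fractional powers, also holds with $\lambda$ replaced by any $\mu \geq \lambda$. Because $A^\sigma$ is itself a symmetric Markov generator on $L^2$ (Bochner subordination via the $\sigma$-stable subordinator), the Liskevich--Semenov inequality
$$
c_p \langle (\mu+A)^\sigma u^{p/2}, u^{p/2}\rangle \leq \langle (\mu+A)^\sigma u, u^{p-1}\rangle, \quad u \geq 0,
$$
is available on $(\mu+A)^\sigma$. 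This will do the heavy lifting.

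For (a), pick $g \in L^p_+ \cap L^2$ and set $u := (\mu+A_p)^{-\sigma}g$, which is nonnegative by positivity preservation. I then chain
$$
\|V^{1/p}u\|_p^p = \langle V u^{p/2}, u^{p/2}\rangle \leq \delta \langle (\mu+A)^\sigma u^{p/2}, u^{p/2}\rangle \leq \delta c_p^{-1} \langle (\mu+A)^\sigma u, u^{p-1}\rangle = \delta c_p^{-1} \langle g, u^{p-1}\rangle,
$$
using the form bound with $h := u^{p/2}$ and then the LS inequality. H\"{o}lder and the contraction estimate $\|u\|_p \leq \mu^{-\sigma}\|g\|_p$ yield $\|V^{1/p}u\|_p^p \leq \delta c_p^{-1}\mu^{-\sigma(p-1)}\|g\|_p^p$, which after taking the $p$-th root gives (a) modulo the precise LS constant. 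For (b) I rerun the scheme with $g := V^{1/p'}f$ for $f \in \mathcal E$; the key twist occurs at the H\"{o}lder step, where
$$
\langle V^{1/p'}f, u^{p-1}\rangle = \langle f, V^{1/p'}u^{p-1}\rangle \leq \|f\|_p \, \|V^{1/p'}u^{p-1}\|_{p'},
$$
and, using $(p-1)p' = p$, the last factor equals $\|V^{1/p}u\|_p^{p/p'}$. Substituting back produces a self-referential inequality for $\|V^{1/p}u\|_p$ whose solution (via $p - p/p' = 1$) yields (b). Finally, (c) is the dual of (a): the $L^p$-adjoint of $V^{1/p}(\mu+A_p)^{-\sigma}$ is $(\mu+A_{p'})^{-\sigma}V^{1/p}$, and swapping $p \leftrightarrow p'$ (noting $c_{p'} = c_p$) converts the estimate in (a) into (c).

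The main technical obstacle is the sharp tracking of constants through the LS/BS inequality for the fractional power $A^\sigma$, matching them to the stated $c_p = 4/(pp')$ rather than $c_p^{-1}$; this requires the precise form of the theorems in \cite{BS,LS} applied to $A^\sigma$ via subordination. A secondary issue is the justification of the formal manipulations when $f \in \mathcal E$: one first establishes everything on the dense set $g \in L^p \cap L^2$ with $V$-bounded support (so that $u$ and the various $V$-weighted products unambiguously belong to the intended spaces), and then extends to $\mathcal E$ and ultimately to all of $L^p$ by a closure argument, exploiting that the resulting a priori bounds are uniform in the truncation.
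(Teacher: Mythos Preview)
Your proposal is correct and follows the paper's proof essentially line for line: set $E=(\mu+A)^{\sigma}$, note that $-E$ is a symmetric Markov generator by subordination, apply the Liskevich--Sem\"{e}nov inequality $\|E^{1/2}u^{p/2}\|_2^2 \leq \tfrac{pp'}{4}\langle E_p u, u^{p-1}\rangle$ to $u=E_p^{-1}g$ (for (\textbf{a})) and $u=E_p^{-1}V^{1/p'}|f|$ (for (\textbf{b})), then chain with the quadratic-form hypothesis and H\"{o}lder, and deduce (\textbf{c}) by duality. Your hesitation about the constant is a typo in the Lemma's statement: the $c_p$ that actually appears throughout the paper's computations (cf.\ Theorem~\ref{thmLp}) is $c_p=\tfrac{pp'}{4}$, not $\tfrac{4}{pp'}$, so your $\delta c_p^{-1}$ and the paper's $\delta c_p$ are the same quantity.
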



\begin{proof} 
(\textbf{a}) Let $E:=(\mu +A)^{\frac{\alpha-1}{\alpha}}$ in $L^2$. Since $-E$ is a symmetric
Markov generator, by \cite[Theorem 2.1]{LS} for every $p \in ]1, \infty [$
$$0 \leq u \in D(E_{p}) \quad \Rightarrow \quad u^{\frac{p}{2}} \in D(E^{\frac{1}{2}}), \quad \|E^{\frac{1}{2}}u^{\frac{p}{2}}\|_{2}^{2} \leq c_p\langle E_p u,u^{p-1} \rangle.
$$
Since $\mu \geq \lambda$, and so $\|V^{\frac{1}{2}}E^{-\frac{1}{2}}\|_{2 \rightarrow 2} \leq \sqrt{\delta}$ , we obtain for $u:=E_{p}^{-1}|f|$, $f \in L^p$
\begin{equation*}
\|V^{\frac{1}{2}} u^{\frac{p}{2}} \|_{2}^{2} \leq  \delta c_{p} \langle E_{p}u,u^{p-1} \rangle = \delta c_{p}\langle |f|,u^{p-1}\rangle.
\end{equation*}
It follows that
\begin{align*}
\|V^{\frac{1}{p}}u\|_{p}^{p}& = \|V^{\frac{1}{2}} u^{\frac{p}{2}} \|_{2}^{2}
 \leq  \delta c_{p}\|f\|_{p}\|u\|_{p}^{p-1} \leq  \delta c_{p} \mu^{-\frac{\alpha-1}{\alpha}(p-1)}\|f\|_{p}^{p}.
\end{align*}
So, $\|V^{\frac{1}{p}}E_{p}^{-1}f\|_{p}^{p}  \leq  \delta c_{p}\mu^{-\frac{\alpha-1}{\alpha}(p-1)}\|f\|_{p}^{p}$,
which yields (\textbf{a}).

\smallskip

($\textbf{b}$) We argue as in ($\textbf{a}$). Put $u:=E_{p}^{-1}V^{\frac{1}{p'}}|f|$, $f \in \mathcal E$.
Then
\begin{align*}
\|V^{\frac{1}{p}}u\|_{p}^{p} \leq   \delta c_{p} \langle E_{p}u,u^{p-1} \rangle 
= \delta c_{p} \langle V^{\frac{1}{p'}} |f|,u^{p-1} \rangle
\leq  \delta c_{p} \|V^{\frac{1}{p}}u\|_{p}^{p-1}\|f\|_{p}
\end{align*}
so $\|V^{\frac{1}{p}}u\|_{p}\leq  \delta c_{p} \|f\|_{p}$, and thus
$\|V^{\frac{1}{p}}E_{p}^{-1}V^{\frac{1}{p'}}f\|_{p}^{p} \leq  \delta c_{p} \|f\|_{p}$.

\smallskip

($\textbf{c}$) follows from (\textbf{a})  by duality.
\end{proof}

\section{Proof of uniqueness (Remark \ref{rem2}) }
\label{u_app}

Denote $A:=(-\Delta)^{\frac{\alpha}{2}}$.

\smallskip

Step 1. Let us show that for every $f \in C_c^\infty$, $(\mu+\Lambda_{C_\infty}(b))^{-1} f = R_\mu^Q f$ $\mathcal L^d$ a.e.\,on  $\mathbb R^d$.
Indeed, since $\{\mathbb Q_x\}$ is a weak solution to \eqref{eq0}, we have by It\^{o}'s formula 
$$
(\mu+A)^{-1} h= R_\mu^Q[(1-b\cdot\nabla (\mu+A)^{-1})h], \quad h \in C_c^\infty.
$$
Since $b \in \mathbf{F}_\delta^{\scriptscriptstyle \frac{\alpha-1}{2}}$, we have $\|1-b\cdot\nabla(\mu+A)^{-1}\|_{\mathcal W^{-\frac{\alpha-1}{2},2} \rightarrow \mathcal W^{-\frac{\alpha-1}{2},2}} <1+\delta$.
By our assumption, $R_\mu^Q$ extends by continuity to $R_{\mu,2}^Q \in \mathcal B(\mathcal W^{-\frac{\alpha-1}{2},2},L^2)$.
Thus, 
$$
(\mu+A)^{-1} g= R_{\mu,2}^Q[(1-b\cdot\nabla (\mu+A)^{-1})g], \quad g \in \mathcal W^{-\frac{\alpha-1}{2},2}.
$$
Take $g=(1-b\cdot\nabla (\mu+A)^{-1})^{-1} f \in \mathcal W^{-\frac{\alpha-1}{2},2}$, $f \in C_c^\infty$. Then by the construction of $\Lambda_2(b)$ (cf.\,Theorem \ref{thmL2}),
$
(\mu+\Lambda_2(b))^{-1}f=R_{\mu,2}^Q f
$ in $L^2$, and the assertion of Step 1 follows since  $(\mu+\Lambda_{C_\infty}(b))^{-1} \upharpoonright C_c^\infty =(\mu+\Lambda_2(b))^{-1} \upharpoonright C_c^\infty$.

\smallskip

Step 2. Since by our assumption
$R_\mu^Q f$ is continuous on $\mathbb R^d$, Step 1 yields that $(\mu+\Lambda_{C_\infty}(b))^{-1} f = R_\mu^Q f$ everywhere on  $\mathbb R^d$, as claimed.

\section{Trotter's Approximation Theorem}

\label{app_trotter}

\begin{theorem}
Let $e^{-t\Lambda_{k}}$, $k=1,2,\dots$, be a sequence of $C_0$ semigroups on a (complex) Banach space $Y$.
Assume that 

{\rm (\textit{i})}~$\sup_{k}\|(\mu+\Lambda_{k})\|_{Y \rightarrow Y} \leq \mu^{-1}$\;$\forall\, \mu > 0$, or 
$\sup_{k}\|(z+\Lambda_{k})\|_{Y \rightarrow Y} \leq C |z|^{-1}$\;\;$\forall\,z \text{ with } \Real z > 0 $.

{\rm (\textit{ii})}~$s\mbox{-}Y\mbox{-}\lim_{\mu \uparrow \infty }\mu (\mu +  \Lambda_{k})^{-1}=1$ uniformly in $k$.

{\rm (\textit{iii})}~ $s\mbox{-}Y\mbox{-}\lim_{k} (z +\Lambda_{k})^{-1}$ exists for some $z$ with $\Real z > 0 $.

Then there is a  $C_{0}$ semigroup  $e^{-t\Lambda}$ on $Y$ such that
$$(z+\Lambda_{k})^{-1} \stackrel{s}{\rightarrow}(z+\Lambda)^{-1} \quad \text{ in } Y, \quad \forall\,z \text{ with } \Real z > 0,$$ and  $$ e^{-t\Lambda_{k}}\stackrel{s}{\rightarrow}e^{-t\Lambda} \quad \text{ in } Y \quad \text{loc.\,uniformly in $t \geq 0$}.$$ 
\end{theorem}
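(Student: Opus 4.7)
The plan is to follow the classical three-step strategy: extend resolvent convergence from a single base point to the entire right half-plane, identify the strong limit as the resolvent of a closed densely defined operator via Hille's pseudo-resolvent theorem, and lift resolvent convergence to locally uniform semigroup convergence.

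First, fix the base point $z_0$ with $\Real z_0 > 0$ from hypothesis (iii). For $z$ close to $z_0$ expand
$$
(z+\Lambda_k)^{-1} = \sum_{n=0}^\infty (z_0 - z)^n (z_0 + \Lambda_k)^{-n-1}.
$$
Hypothesis (i) gives a uniform bound on the resolvent norms, so the series converges in $\mathcal B(Y)$ norm uniformly in $k$ on some fixed disk around $z_0$, and together with the strong convergence at $z_0$ this yields strong convergence on the whole disk. Iterating along a chain of overlapping disks whose radii are controlled by (i) then propagates strong convergence to every $z$ in the open right half-plane. Write $R(z)$ for the resulting strong limit. Passing $k \to \infty$ in the resolvent identity
$$
(z+\Lambda_k)^{-1} - (w+\Lambda_k)^{-1} = (w-z)(z+\Lambda_k)^{-1}(w+\Lambda_k)^{-1}
$$
shows that $R(\cdot)$ satisfies the pseudo-resolvent identity and inherits the bound $\|R(z)\| \leq C/|z|$.

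Second, hypothesis (ii), being uniform in $k$, passes to the limit and gives $\mu R(\mu) f \to f$ as $\mu \uparrow \infty$ for every $f \in Y$. Hence the common null space of the family $\{R(z)\}$ is trivial and the common range is dense in $Y$. By the Hille theorem on pseudo-resolvents (cited in the text from \cite{HP, Yos}), there exists a closed densely defined operator $-\Lambda$ on $Y$ with $R(z) = (z+\Lambda)^{-1}$. The resolvent bound (together with the Hille-Yosida theorem, or its sectorial/holomorphic variant under the second form of (i)) then furnishes a $C_0$ semigroup $e^{-t\Lambda}$ generated by $-\Lambda$, and by construction $(z+\Lambda_k)^{-1} \xrightarrow{s} (z+\Lambda)^{-1}$ for every $z$ with $\Real z > 0$.

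Third, to lift resolvent convergence to semigroup convergence, introduce the Yosida approximants $\Lambda_k^{(\mu)} := \mu - \mu^2 (\mu+\Lambda_k)^{-1}$ and $\Lambda^{(\mu)} := \mu - \mu^2 (\mu+\Lambda)^{-1}$, and decompose
$$
e^{-t\Lambda_k} f - e^{-t\Lambda} f = \bigl(e^{-t\Lambda_k} - e^{-t\Lambda_k^{(\mu)}}\bigr) f + \bigl(e^{-t\Lambda_k^{(\mu)}} - e^{-t\Lambda^{(\mu)}}\bigr) f + \bigl(e^{-t\Lambda^{(\mu)}} - e^{-t\Lambda}\bigr) f.
$$
The outer two differences are controlled, uniformly in $k$ and in $t$ on bounded intervals, by the standard estimate $\|(e^{-t\Lambda_k} - e^{-t\Lambda_k^{(\mu)}}) g\| \leq t \|(\Lambda_k^{(\mu)} - \Lambda_k) g\|$ applied to $g$ in the dense set $\mu R(\mu) Y$, together with the uniform resolvent bound (i); the middle difference tends to $0$ as $k \to \infty$ for each fixed $\mu$ and $t$, because $e^{-t\Lambda_k^{(\mu)}}$ is the norm-convergent exponential series in the bounded operator $\Lambda_k^{(\mu)}$, which is built from $(\mu+\Lambda_k)^{-1}$. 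The main obstacle is coordinating the two limits $k \to \infty$ and $\mu \to \infty$ so that convergence is locally uniform in $t \geq 0$; this is a routine $\varepsilon/3$ argument once the uniformity in $k$ of the Yosida error is extracted from hypothesis (i) and the density supplied by (ii).
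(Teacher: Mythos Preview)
The paper does not give its own proof of this theorem: after the statement it simply records ``(This is a special case of the Trotter Approximation Theorem \cite[Ch.\,IX, sect.\,2]{Ka}.)'' and treats the result as a black box. Your proposal is a correct outline of the classical Trotter--Kato argument found in that reference, so there is nothing to compare; your sketch is in fact more detailed than what the paper provides.
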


(This is a special case of the Trotter Approximation Theorem  \cite[Ch.\,IX, sect.\,2]{Ka}.)

\end{document}